\documentclass[11pt, reqno]{amsart}
\usepackage{amssymb,amsmath,amsthm,amstext,amscd,latexsym,graphics,graphicx,bbm,caption,enumerate}

\usepackage[dvipsnames,svgnames,table]{xcolor}
\usepackage{mathtools}
\usepackage{hyperref}
\usepackage{tikz-cd}
\usetikzlibrary{matrix,arrows,decorations.pathmorphing}
\usetikzlibrary{positioning}
\usepackage{relsize}
\usepackage{enumerate}
\usepackage[shortlabels]{enumitem}
\hypersetup{plainpages=false,colorlinks=true, pagebackref}

\newtheorem{intro-thm}{Theorem}[]
\theoremstyle{plain}
\newtheorem{thm}{Theorem}[section]
\newtheorem{theorem}[thm]{Theorem}

\newtheorem{lemma}[thm]{Lemma}
\newtheorem{corollary}[thm]{Corollary}
\newtheorem{proposition}[thm]{Proposition}

\theoremstyle{definition}
\newtheorem{remark}[thm]{Remark}

\newtheorem{point}[thm]{}
\newtheorem{notation}[thm]{Notations}

\newtheorem{definition}[thm]{Definition}

\newtheorem{example}[thm]{Example}

\newtheorem{construction}[thm]{Construction}

\newcommand{\Pic}{{\rm Pic}}

\newcommand{\Div}{{\rm Div}}

\newcommand{\Aut}{{\rm Aut}}

\newcommand{\pr}{\mathrm{pr}}

\newcommand{\Spec}{{\rm Spec \,}}

\renewcommand{\tilde}{\widetilde}

\newcommand{\sC}{{\mathcal C}}

\newcommand{\sF}{{\mathcal F}}
\newcommand{\sG}{{\mathcal G}}
\newcommand{\sH}{{\mathcal H}}

\newcommand{\sL}{{\mathcal L}}

\newcommand{\sO}{{\mathcal O}}

\newcommand{\sS}{{\mathcal S}}

\newcommand{\sX}{{\mathcal X}}
\newcommand{\sY}{{\mathcal Y}}

\newcommand{\A}{{\mathbb A}}

\newcommand{\C}{{\mathbb C}}

\newcommand{\G}{{\mathbb G}}

\renewcommand{\P}{{\mathbb P}}

\newcommand{\Z}{{\mathbb Z}}

\input{xy}
\xyoption{all}
\title{$\mathbb{A}^1$-fibration in algebraic geometry and $\mathbb{A}^1$-homotopy type.
}

\begin{document}
\subjclass[2000]{14F42}

\author{Utsav Choudhury}

\author{Aritra Mandal}
	
\author{Biman Roy}
\begin{abstract}
	In this article we show that an $\mathbb{A}^1$ bundle map or a vector bundle map $p: X \to Y$ induces trivial local fibration $\underline{Sing}(X) \to \underline{Sing}(Y)$. Using this, we first show that for Korus Russel threefolds of first kind $X$ the space  $\underline{Sing}(X)$ is $\mathbb{A}^1$  local. Then we show that for any smooth affine complex surface $X$, the $\mathbb{A}^1$- connected component sheaf is homotopy invariant. 
\end{abstract}

\maketitle

\section{Introduction}	
The homotopy information of a manifold $X$ fibered over a base space $Y$ with a fixed fiber $F$ can be recovered from the homotopy information of both $Y$ and $F$. This is because a 
fibration of the form $F \to X \to Y$, gives rise to the following exact sequence of homotopy groups (and pointed sets for $\pi_0$) : 
\begin{align*}
	\dots \to \pi_n(F) \to \pi_n(X) \to \pi_n(Y) \to & \pi_{n-1}(F) \to \dots  \\
	&\to \pi_0(F) \to \pi_0(X) \to \pi_0(Y).
\end{align*}

\ 

If the fiber $F$ is moreover contractible (i.e. $\pi_n(F) = 0$ for all $n \geq 0$), a direct application of the long exact sequence or alternatively, the Mayer-Vietoris property for local trivializations demonstrates that the homotopy invariants of the base $Y$ are isomorphic to those of the total space $X$. Such fibrations were classically exploited to compute homotopy groups of spheres and classify fiber bundles (see \cite{steenrod} or \cite{serre thesis}).

\

In the setting of algebraic geometry, one encounters a different class of fibrations. Consider a morphism $p : X \to Y$ of algberaic varieties that is generically trivial with a given variety $F$ as its generic fiber. Consequently, the ``bad" (or singular) fibers are confined to a closed locus of codimension one or greater. In many cases where the base and fibers are smooth, the rigidity inherent in algebraic geometry ensures that the topology of these degenerate fibers is heavily constrained by the generic fiber. For instance, when the base $Y$ is a curve and the generic fiber is an affine curve, bad fibers occur only over finitely many closed points. Furthermore, the Euler characteristic of the components of these bad fibers is explicitly determined by the Euler characteristic of the generic fiber. (For $\mathbb{A}^1$-fibrations on surfaces and their degeneration, see \cite{miyanishicrm} or  \cite{kambayashi}. For more general fiber degenerations and Euler characteristics, see \cite{iitaka}.)

\ 

In such a situation, it is desirable that the homotopy invariants of the total space be recoverable from the homotopy information of the generic fiber, the homotopy information of the base, and the geometry/configuration of the degenerate (bad) fibers. This is readily achieved for vector bundles and, more generally, vector bundle torsors (i.e., affine spaces bundles). These represent the precise cases where the generic and degenerate fibers coincide structurally, thereby forming a genuine fiber bundle (see, for example, \cite{brauer group}). Consequently, non-trivial phenomena arise when genuine degenerations and bad fibers occur.

\

For instance, the complex sphere $S^2_{\mathbb{C}}$ admits an $\mathbb{A}^1$-fibration over $\mathbb{A}^1$, where the singular fiber is two disjoint copies of $\mathbb{A}^1$. Classically, it is well known that its underlying topological homotopy type is that of $\mathbb{CP}^1$,  which is also equivalent to the homotopy type of $\mathbb{A}^1$ with doubled origin. This can also be deduced from the fact that $S^2_{\mathbb{C}}$ carries the structure of an $\mathbb{A}^1$-bundle over $\mathbb{CP}^1$. The topological and birational classification of smooth affine surafces with negative log Kodaira dimension, which naturally exhibit such $\mathbb{A}^1$-fibrations, has been studied extensively in the literature (see, \cite{miyanishicrm}).

\

In this article, we explore the $\mathbb{A}^1$-algebraic topology of varieties admitting an $\mathbb{A}^1$-fibration over curves or surfaces. Any $\mathbb{A}^1$-fibration $X \to C$ over a curve $C$ has $\mathbb{A}^1$ as its generic fiber, while its bad fibers can consist at most of disjoint unions of "thickened" $\mathbb{A}^1$'s ( non-reduced lines). Within the framework of 
$\mathbb{A}^1$-algebraic topology (as developed by Morel and Voevodsky \cite{mv}), the affine line $\mathbb{A}^1$ is contractible to a point. Generically, therefore, the $\mathbb{A}^1$-homotopy type of $X$ is equivalent to the $\mathbb{A}^1$-homotopy type of $C$. 

\

It is natural to expect that the global $\mathbb{A}^1$-homotopy type of $X$ can be fully recovered from the base $C$ alongside the geometry of the bad fibers - specifically, the number of singular fibers, their component count, and their respective multiplicities. In fact, the fibration $X \to C$ can be factored through a smooth algebraic space $\mathcal{C}$ that is birational to $C$, such that the induced map $X \to \mathcal{C}$ is an étale $\mathbb{A}^1$-bundle. The space $\mathcal{C}$ obtained via this factorization precisely encodes the data of the bad points, the number of components of the fibers over them, and their corresponding multiplicities \cite{doub}. 

\

We first prove that (Theorem \ref{sing-etale-bundle}) if   $\rho: X \to \mathcal{C}$ is an étale locally trivial $\mathbb{A}^1$-bundle, such that $X$ is a smooth variety over $k$ and $\mathcal{C}$ is a smooth algebraic space over $k$, then the induced morphism $\rho_*: Sing_*(X) \to Sing_*(\mathcal{C})$ is a Nisnevich local weak equivalence. Note that, when $\mathcal{C}$ is a smooth scheme and $\rho$ is a Nisnevich $\mathbb{A}^n$-bundle then this result obviously follows from homotopy invariance and Nisnevich Mayer Vietoris \cite{krishnakumar}.
Our theorem (Theorem \ref{sing-etale-bundle}) has stronger implications.

\

For instance, if $\rho : X \to \mathcal{C}$ as in the Theorem  \ref{sing-etale-bundle}, we get  $\sS(X) \cong \sS(\mathcal{C})$ as Nisnevich sheaves (see \cite[Definition 2.9]{bhs} for the construction of $\sS(X)$). This observation, together with rigidity property of the base curve and techniques of ghost homotopy, is used to prove Theorem \ref{connected-base-new} and Theorem \ref{rigid-base}. These prove that for any $\mathbb{A}^1$-fibration $X \to C$ over a smooth curve $C$ we have $\pi_0^{\mathbb{A}^1}(X)$ is $\mathbb{A}^1$-invariant. A corollary of these theorems is  that for all smooth affine complex surfaces $X$ of negative log Kodaira dimesnion, $\pi_0^{\mathbb{A}^1}(X)$ is $\mathbb{A}^1$-invariant (Corollary \ref{negative kodaira invariant}).
Using similar techniques as in \cite[Proposition 2.9]{choudhury2}, 
we get that for non-negative log Kodaira dimension smooth affine complex surfaces $X$, $\sS(X) = \sS^2(X)$ and therefore $\pi_0^{\mathbb{A}^1}(X)$ is homotopy invariant for all smooth affine complex surfaces. Homotopy invariance of $\pi_0^{\mathbb{A}^1}(X)$ of smooth proper complex surfaces has been proven in \cite[Corollary 3.15]{bhs} and \cite[Theorem 1.2]{bs}.

\

Another direct consequence of our Theorem \ref{sing-etale-bundle} is that $Sing_*(X)$ is $\mathbb{A}^1$-local if and only if $Sing_*(\mathcal{C})$ is $\mathbb{A}^1$-local. The property of $Sing_*(S)$ for space $S$ being $\mathbb{A}^1$-local is extremely useful (see \cite{am}, \cite{asokI}, \cite{asokII}, \cite{balwe reductive}).
For instance if $S = \mathbb{A}^2 \setminus \left\{(0,0)\right\}$ \cite[Theorem 8.9]{mor} (see also \cite[Corollary 4.2.6]{asokII})
or $S = \mathbb{A}^n$, then $Sing_*(S)$ is $\mathbb{A}^1$-local. If $Y$ is a retract of a space $S$, such that $Sing_*(S)$ $\mathbb{A}^1$-local, then $Sing_*(Y)$ is $\mathbb{A}^1$-local. Therefore if a variety $Y$ is retract of $\mathbb{A}^n$ then $Sin_*(Y)$ has to be $\mathbb{A}^1$-local.  

\

We show that if $X$ is a Koras-Russell threefold of the first kind then $Sing_*(X)$ is $\mathbb{A}^1$-local (Theorem \ref{application purity koras}). We use exactly the same (as in 
\cite{df}) using weak unstable five lemma applied to the same diagram of cofibration. It is also useful to note that for all $\mathbb{A}^1$-fibration $\rho : X \to C$ with $X$ smooth affine complex surface  such that the algebraic space curve $\mathcal{C}$ is $\mathbb{A}^1$-rigid we get $Sing_*(X)$ is $\mathbb{A}^1$-local (Corollary \ref{naive spaces}).

\

The remainder of this article is organized as follows. In Section 2,  we briefly recall the construction of unstable motivic homotopy category and review the core properties which are relevant to our main results. In particular, we prove that the map  $Sing_*(\mathbb{A}^n) \to Spec(k)$ is a sectionwise trivial fibration (Example \ref{kan fibrant An}, Appendix add general horn filling formula and refer here, Lemma \ref{contractible An}). Finally, we establish the main result,  Theorem \ref{sing-etale-bundle}.

\

In Section 3,  we prove Lemma \ref{key lemma} which shows that $Sing_*$ commutes with good quotient.  Using \ref{sing-etale-bundle}, purity, excision and weak five lemma,  we prove
Theorem  \ref{application purity koras}.

\

The first important result in Section 4 is Proposition \ref{ghost homotopy implies dominant 1}. This shows that existence of nontrivial ghost homotopy on a surface $X$ implies that the surface $X$ is log-uniruled. The method is similar to the method used in (\cite{cb} \cite{choudhury2}).
This proposition is used to prove that for smooth affine complex surfaces $X$ of log Kodaira dimension non negative, we have $S(X) = S^2(X)$ and therefore $\pi_0^{\mathbb{A}^1}(X)$ is $\mathbb{A}^1$-invariant (Theorem \ref{non-uniruled}, Corollary \ref{invariant non-uniruled}). Finally, we use Theorem \ref{rigid-base} Theorem \ref{connected-base-new} to conclude that for any smooth affine complex surface $X$, $\pi_0^{\mathbb{A}^1}(X)$ is $\mathbb{A}^1$-invariant. 

\

Section 5 is dedicated to computing the $\mathbb{A}^1$-homotopy types of surfaces with negative log Kodaira dimension. In particular, we provide a complete classification of the $\mathbb{A}^1$-homotopy types for $\mathbb{A}^1$-fibrations $X \to C$ whose fiber components contain no non-reduced (thick) $\mathbb{A}^1$'s. Finally, the Appendix contains several explicit computations related to the horn-filling property; since these results lack a standard reference in the literature, we have included them here for completeness.

\

\textbf{Acknowledgements} :

The authors would like to thank Adrien Dubouloz and Anand Sawant for their helpful comments and suggestions. This article constitutes part of the Ph.D. thesis of the second author, who expresses deep gratitude to the Indian Statistical Institute for providing excellent resources and support throughout this work and to Aranya Kumar Bal for stimulating discussion about $\A^1$-fibration to $\A^1$, especially for Example \ref{rigid example}. The first author was supported by the ANRF/ARGM; this work is part of the project ANRF/ARGM/2025/000240/MTR. The third author thanks R. V. Gurjar for pointing out a mistake in an earlier version of the article. The third author thanks Aravind Asok for his encouragement and discussions. The third author also thanks Charanya Ravi, Peter Haine, Haoyang Liu for helpful discussions. A part of this work was done, while the third author was at Tata Institute of Fundamental Research, Mumbai. The third author would like to thank TIFR and the University of Southern California for providing resources during this work.

	\section{Unstable Motivic homotopy Category}
In this section, we first recall the construction of unstable $\mathbb{A}^1$-homotopy category $\mathcal{H}(k)$ from \cite{mv}, introduced by Morel-Voevodsky and we will discuss a few important properties of $\mathcal{H}(k)$. 
We prove Theorem \ref{sing-etale-bundle}, which says that an \'etale $\A^1$-bundle $\rho: X \to \mathcal{C}$ induces the Nisnevich local weak equivalence $\rho_*: Sing_*(X) \to Sing_*(\mathcal{C})$ (see also Theorem \ref{general bundle}, for similar result if $\rho$ is an ). Theorem \ref{sing-etale-bundle} is an important ingredient to prove the $\A^1$-locality of $Sing_*(X)$, if $X$ is a Koras-Russell threefold of the first kind (Theorem \ref{application purity koras}) and the $\A^1$-invariance of $\pi_0^{\A^1}(X)$, if $X$ is an $\A^1$-ruled surface (Theorem \ref{connected-base-new}).
We will also recall the fact that for every $k$-scheme $U$, $Sing_*(\mathbb{A}^n_k)(U)$ is a Kan fibrant simplicial set, by computing the horn filling formula (see Appendix \ref{full hornfill An}). \par

		\subsection{Construction and properties}
 Let $k$ be a field and $Sm/k$ be the category of finite type, smooth schemes over $k$. Let $\Delta^{op}PSh(Sm/k)$ be the category of simplicial presheaves on $Sm/k$, which we call the category of spaces over $k$. The category $\Delta^{op}PSh(Sm/k)$ has a model structure, called the projective model structure; where fibrations are sectionwise Kan fibrations, 
weak equivalences are sectionwise weak equivalences between simplicial sets and the cofibrations are defined using the left lifting property with respect to the trivial fibrations.\par
The category $Sm/k$ is equipped with the Nisnevich topology, in which the stalks are the sections over the Henselian  local schemes, which are essentially smooth schemes over $k$. A morphism $\sX \to \sY$ in $\Delta^{op}PSh(Sm/k)$ is called a Nisnevich local weak equivalence, if for every essentially smooth Henselian local scheme $\sO$ over $k$, the morphism
$$\sX(\sO) \to \sY (\sO)$$ 
is a weak equivalence of simplicial sets. Inverting the local weak equivalences, we get the Nisnevich local model structure on $\Delta^{op}PSh(Sm/k)$. The associated homotopy category is called the simplicial homotopy category, denoted by $\sH_s(k)$. Then taking the left Bousfield localisation of the Nisnevich local model structure on $\Delta^{op}PSh(Sm/k)$ with respect to the projection maps $\A^1_k\times_k X \to X$, for $X \in Sm/k$, we get the $\mathbb{A}^1$-model structure and the associated homotopy category is called the $\A^1$-homotopy category, denoted by $\sH(k)$. If we start with the category of pointed spaces over $k$, the same constructions would give the pointed simplicial homotopy category $\sH_{s, \bullet}(k)$ and the pointed $\mathbb{A}^1$-homotopy category $\mathcal{H}_\bullet(k)$ respectively.
			\begin{remark}
    			\begin{enumerate}
        \item A morphism of spaces $f: \sX \to \sY$ is called a Nisnevich local weak equivalence (or simplicial weak equivalence), if it an isomorphism in $\mathcal{H}_s(k)$ and a morphism of spaces which induces an isomorphism in $\mathcal{H}(k)$, is called an $\mathbb{A}^1$-weak equivalence.
        \item For $X \in Sm/k$ and a Nisnevich covering $f: U \to X$ in $Sm/k$, the \v{C}ech hypercover $\text{\v{C}}(f) \to X$ associated to $f$ is a Nisnevich local weak equivalence \cite[Lemma 1.15, Section 2.1]{mv}. The Nisnevich local model structure is the left Bousfield localisation of the projective model structure at the \v{C}ech hypercovers \cite[Proposition 8.1]{dugger}.
        \item For a space $\mathcal{X}$ over $k$, the projection map $\mathcal{X} \times_k \mathbb{A}^1_k \to \mathcal{X}$ is an $\mathbb{A}^1$-weak equivalence.
        \item If $\rho: X \to Y$ is an  morphism in $Sm/k$, then it is an $\mathbb{A}^1$-weak equivalence. More generally, if $\rho: X \to Y$ is an $\mathbb{A}^n$-bundle in Nisnevich topology (Definition \ref{bundle}), then it is an $\mathbb{A}^1$-weak equivalence \cite[Example 2.3, Section 3.1]{mv}. In Theorem \ref{sing-etale-bundle} and Theorem \ref{general bundle}, we will prove that if $\rho: X \to Y$ is an \'etale $\mathbb{A}^1$-bundle or an , then the induced morphism $\rho_*: Sing_*(X) \to Sing_*(Y)$ (Definition \ref{sing functor}) is a Nisnevich local weak equivalence.
    			\end{enumerate}
			\end{remark}
\begin{definition}($\A^1$-local) \cite[Definition 2.1, Section 3.2]{mv} \label{a1 local definition} \hfill
 A simplicial presheaf $\sX$ 
		is called $\A^1$-local if for any simplicial presheaf $\sY$, 
		the map
		$$Hom_{\sH_s(k)}(\sY,\sX)\to Hom_{\sH_s(k)}(\sY\times\A^1,\sX),$$
		induced by projection $\sY\times\A^1 \to \sY$, is a bijection of sets. 
\end{definition}
\begin{remark}
A space $\sX$ over $k$ is fibrant in the $\A^1$-model structure if and only if it is fibrant in the Nisnevich local model structure and it is $\mathbb{A}^1$–local
\cite[Proposition 3.19, Section 2.3]{mv}.
\end{remark}

        
       

\begin{definition} \cite[Section 2.3]{mv} \label{sing functor}
Suppose, $\mathcal{X}$ is a space over $k$.
The functor 
$$Sing_*: \Delta^{op}PSh(Sm/k) \to \Delta^{op}PSh(Sm/k)$$ 
is defined as
$$Sing_*(\mathcal{X})(U)_n=\mathcal{X}(\Delta^n_a \times_k U)_n,$$
where $\Delta^n_a$ is the cosimplicial object in $Sm/k$ defined as, 
$$\Delta^n_a = \Spec\ \frac{k[x_0, x_1,\dots, x_n]}{(\sum_{i=0}^n x_i - 1)}.$$ 
So, $\Delta^n_a \cong \mathbb{A}^n_k$. The boundary maps $d_i$ and the degeneracy maps $s_i$ are described in the Appendix \ref{maps of sing}.
\end{definition}

\begin{remark}
\begin{enumerate}
\item The canonical morphism $\mathcal{X} \to Sing_*(\mathcal{X})$ is an $\mathbb{A}^1$-weak equivalence \cite[Corollary 3.8, Section 2.3]{mv}.
\item If $f: \mathcal{X} \to \mathcal{Y}$ is 
a fibration in the $\A^1$-model structure, then the morphism $Sing_*(f): Sing_*(\mathcal{X}) \to Sing_*(\mathcal{Y})$ is also a fibration in the $\A^1$-model structure
\cite[Corollary 3.13, Section 2.3]{mv}.
\end{enumerate}
\end{remark}

\begin{definition} \label{kan fibrant definition} \cite[Definition 1.3]{may}
A simplicial set $X$ is called Kan fibrant if for every $n$ and 
given $l$-th horn in $X$ i.e. a map $\phi: \Lambda^n_l \to X$, $0 \leq l \leq n$, there is a map $\Phi: \Delta^n \to X$ such that $\phi = \Phi \circ \theta$, where $\theta: \Lambda^n_l \to \Delta^n$ is the inclusion of the $l$-th horn in $\Delta^n$. Equivalently, 
given $n$-many $(n-1)$ simplices $x_0,\dots, x_{l-1}, x_{l+1},\dots, x_n$ of $X$ satisfying the compatibility condition 
$$d_i x_j = d_{j-1} x_i, \text{ for }i < j, i, j \neq l, \ d_i: X_n \to X_{n-1} \text{ are the boundary maps },$$
then there is an $n$-simplex $x$ such that $d_i x  = x_i$ for all $i \neq l$. 
\end{definition}
\begin{example} \label{kan fibrant An}
Any simplicial group is Kan fibrant \cite[Theorem 17.1]{may}. Thus for any $k$-scheme $U$, $Sing_*(\mathbb{A}^m_k)(U)$ is Kan fibrant, since 
$\mathbb{A}^m_k$ is an affine group scheme (see also Remark \ref{boundary and horn filling}(3)). The retract of a Kan fibrant simplicial set is Kan fibrant.
\end{example}
Using \cite[Lemma 3.1]{curtis-simplicial} we give an explicit formula of $2$-horn filling of the sections of $Sing_*(\mathbb{A}^m_k)$ in the next example, over $U \in Sm/k$.

\begin{example}{[$2$-horn filling formula for $Sing_*(\A^m_k)(U)$, see also Section \ref{full hornfill An}]} \label{2 horn}
	Let $U$ be any $k$-scheme and
	suppose, we are given a morphism 
	$$\phi: \Lambda^2_l \to Sing_*(\mathbb{A}^m_k)(U), \text{ where } U \in Sm/k, l \in \{0, 1, 2\}.$$
	We will extend $\phi$ to a $2$-simplex of $Sing_*(\mathbb{A}^m_k)(U)$.
	We have the following isomorphism:
	\begin{align*}
		Sing_*(\mathbb{A}^m_k)(U)_n & = Hom_{Sm/k}(\Delta^n_a \times_k U, \mathbb{A}^m_k) \\
		& \cong Hom_{k-alg}(k[T_1,\dots, T_m], \frac{A[x_0,\dots, x_n]}{(\sum_{i=0}^n x_i - 1)}),
	\end{align*}
	where $A$ is the ring of regular functions $\mathcal{O}(U)$.
	Here are three cases according to the $l$-th horn, $l = 0,1,2$. \par
	\begin{enumerate}[start=1,label={\bfseries Case \arabic*:}]
		\item $l=0$.

 Suppose we are given
		$$\phi_1, \phi_2: k[T_1,\dots,T_m] \to \frac{A[x_0, x_1]}{(x_0 + x_1 - 1)}$$
		such that $d^1 \circ \phi_2 = d^1 \circ \phi_1$.  Since $T_j$'s are the independent variables, suppose $\phi_1, \phi_2$ are given by
		$$T_j \xmapsto{\phi_1} \overline{f_j}, \ f_j \in A[x_0, x_1] \text{ and}$$
		$$T_j \xmapsto{\phi_2} \overline{g_j}, \ g_j \in A[x_0, x_1]$$
		and $f_j, g_j$ satisfy
		$$\overline{g_j(x_0, 0)} = \overline{f_j(x_0, 0)}.$$
		Define $\Phi: k[T_1,\dots,T_m] \to \frac{A[x_0, x_1, x_2]}{(x_0 + x_1 +x_2-1)}$
		as 
		$$T_j \mapsto \overline{g_j(x_0, x_1+x_2) + f_j(x_0+x_1, x_2) - g_j(x_0 + x_1 , x_2) }.$$
		Then $d^1 \circ \Phi$ is given by 
		$$T_j \mapsto \overline{g_j(x_0, x_1) - g_j(x_0, x_1)+ f_j(x_0, x_1)}$$
		so, $T_j \xmapsto{d^1 \circ \Phi} \overline{f_j(x_0, x_1)}$. Thus, $d^1 \circ \Phi = \phi_1$. \\
		The map $d^2 \circ \Phi$ is given by
		$$T_j \mapsto \overline{g_j(x_0, x_1) - g_j(x_0+x_1, 0) + f_j(x_0+x_1, 0)}.$$
		Since $\overline{g_j(x_0, 0)} = \overline{f_j(x_0, 0)} \in \frac{A[x_0]}{(x_0 -1)}$, so $T_j \xmapsto{d^2 \circ \Phi} \overline{g_j(x_0, x_1)}$. Thus, $d^2 \circ \Phi = \phi_2$.
		\item $l = 1$.

		Suppose, we are given
		$$\phi_0, \phi_2: k[T_1,\dots,T_m] \to \frac{A[x_0, x_1]}{(x_0 + x_1 -1)}$$
		such that $d^0 \circ \phi_2 = d^1 \circ \phi_0$. Suppose $\phi_0, \phi_2$ are given by
		$$T_j \xmapsto{\phi_0} \overline{f_j}, \ f_j \in A[x_0, x_1] \text{ and}$$
		$$T_j \xmapsto{\phi_2} \overline{g_j}, \ g_j \in A[x_0, x_1]$$
		and $f_j, g_j$ satisfy
		$$\overline{g_j(0, x_0)} = \overline{f_j(x_0, 0)}.$$
		Define $\Phi: k[T_1,\dots,T_m] \to \frac{A[x_0, x_1, x_2]}{(x_0 + x_1 +x_2-1)}$
		as 
		$$T_j \mapsto \overline{f_j(x_0+ x_1,x_2) + g_j(x_0, x_1+ x_2) - f_j(x_0 + x_1+ x_2, 0) }, $$
		$$\text{so } T_j \xmapsto{\Phi} \overline{f_j(x_0+x_1, x_2) + g_j(x_0, x_1 + x_2) - f_j(1, 0) }.$$
		Similarly as in Case 1, we can check $d^0 \circ \Phi = \phi_0$ and $d^2 \circ \Phi = \phi_2$. 
		\item $l = 2$.

		Suppose, we are given
		$$\phi_0, \phi_1: k[T_1,\dots,T_m] \to \frac{A[x_0, x_1]}{(x_0 + x_1 -1)}$$
		such that $d^0 \circ \phi_1 = d^0 \circ \phi_0$. Suppose $\phi_0, \phi_1$ are given by
		$$T_j \xmapsto{\phi_0} \overline{f_j}, \ f_j \in A[x_0, x_1] \text{ and}$$
		$$T_j \xmapsto{\phi_1} \overline{g_j}, \ g_j \in A[x_0, x_1]$$
		and $f_j, g_j$ satisfy
		$$\overline{g_j(0, x_0)} = \overline{f_j(0, x_0)}.$$
		Define $\Phi: k[T_1,\dots,T_m] \to \frac{A[x_0, x_1, x_2]}{(x_0 + x_1 +x_2-1)}$
		as 
		$$T_j \mapsto \overline{f_j(x_0+ x_1,x_2) + g_j(x_0, x_1+ x_2) - f_j(x_0, x_1+ x_2) }.$$
		$$T_j \xmapsto{\Phi} \overline{f_j(x_0+x_1, x_2) - f_j(1, 0) + g_j(x_0, x_1 + x_2)}.$$
		Similarly as in Case 1, we can check $d^0 \circ \Phi = \phi_0$ and $d^1 \circ \Phi = \phi_1$. 
		
		Therefore, for any horn $\phi: \Lambda^2_l \to Sing_*(\A^m_k)(U)$, where $0 \leq l \leq 2$, there is a $2$-simplex $\Phi:\Delta^2 \to Sing_*(\A^m_k)(U)$ that extends the horn $\phi$, that is, $\phi= \Phi \circ \theta$, where $\theta: \Lambda^2_l \to \Delta^2$ is the inclusion (Definition \ref{kan fibrant definition})
	\end{enumerate}
\end{example}
In \ref{full hornfill An} using \cite[Lemma 3.1]{curtis-simplicial}, we write $l$-th horn (inside $\Delta^n$) filling formula, for any $l,n$ with $0 \leq l \leq n$.




\begin{remark} \cite[Example 2.19]{choudhury2}
Thus in particular, if we are given two morphisms $f, g : \mathbb{A}^1_k \to \mathbb{A}^m_k$ such that $f(1) = g(0)$, then the morphism
$h: \mathbb{A}^2_k \to \mathbb{A}^m_k$ defined as
$$(t_1, t_2) \mapsto (f(1-t_1)+g(t_2) - f(1))$$
satisfies $h(1-x, 0) = f(x)$ and $h(0, x) = g(x)$. 
Therefore, if there are two intersecting $\mathbb{A}^1$'s in $\mathbb{A}^2_k$, then we can extend it to get a morphism from $\mathbb{A}^2_k$ to $\mathbb{A}^2_k$.
\end{remark}
We end this subsection with the following lemma about section-wise simplicial contractibility of $Sing_*(\A^m_k)$ (see also Remark \ref{boundary and horn filling}(5)). 


\begin{lemma} \cite[Section 2.3, Corollary 3.5]{mv} \label{contractible An}
    Suppose, $U\in Sm/k$. Then the Kan fibrant simplicial set $Sing_*(\mathbb{A}^n_k)(U)$ is contractible.
\end{lemma}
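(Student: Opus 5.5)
The plan is to exhibit an explicit simplicial contraction of $Sing_*(\mathbb{A}^n_k)(U)$ onto the vertex $0$, using the linear homotopy $\mathbb{A}^n_k \times \mathbb{A}^1_k \to \mathbb{A}^n_k$, $(v,t) \mapsto tv$. Since $Sing_*(\mathbb{A}^n_k)(U)$ is Kan fibrant (Example \ref{kan fibrant An}), it suffices to show that its homotopy groups vanish at every base point and that it is connected. Equivalently, it suffices to show that the identity map is simplicially homotopic to the constant map at $0$. For Kan complexes, a simplicial homotopy $\Delta^1 \times K \to K$ between $\mathrm{id}$ and a constant map gives contractibility.

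First I construct the homotopy. An $m$-simplex of $Sing_*(\mathbb{A}^n_k)(U)$ is a morphism $f: \Delta^m_a \times U \to \mathbb{A}^n_k$. The $m$-simplices of $\Delta^1$ correspond to order-preserving maps $\alpha: [m] \to [1]$. To such an $\alpha$ I associate the regular function $t_\alpha = \sum_{i : \alpha(i) = 1} x_i$ on $\Delta^m_a$, and define
$$H(\alpha, f) = t_\alpha \cdot f \colon \Delta^m_a \times U \to \mathbb{A}^n_k,$$
using the scalar action $\mathbb{A}^1 \times \mathbb{A}^n \to \mathbb{A}^n$. I then check compatibility with faces and degeneracies. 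Under a coface $\delta^j: \Delta^{m-1}_a \to \Delta^m_a$ (inserting $x_j = 0$), the pullback of $t_\alpha$ is $t_{\alpha \circ \delta^j}$; the same holds for codegeneracies, which add adjacent coordinates. This is the standard fact that $\alpha \mapsto t_\alpha$ is the map of cosimplicial schemes $\Delta^\bullet_a \to \mathbb{A}^1$ coming from $\Delta^1 \to \mathbb{A}^1$. Since scalar multiplication is natural, $H$ is a map of simplicial sets $\Delta^1 \times Sing_*(\mathbb{A}^n_k)(U) \to Sing_*(\mathbb{A}^n_k)(U)$. At the constant $\alpha = 1$ we have $t_\alpha = \sum x_i = 1$, so $H$ restricts to the identity. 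At $\alpha = 0$ we have $t_\alpha = 0$, so $H$ restricts to the constant simplex $0$.

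Finally I deduce contractibility. The homotopy shows that $\mathrm{id}$ and the constant map induce the same maps on $\pi_0$ and on all $\pi_i$. For $\pi_i$ at a base point $v$, I note that the homotopy moves $v$ along the path $t v$. To avoid base-point subtleties, I would rather argue that $Sing_*(\mathbb{A}^n_k)(U) \to \Delta^0$ is a homotopy equivalence: the inclusion of $0$ composed with the projection is homotopic to the identity, and the other composite is the identity. A homotopy equivalence is a weak equivalence, so the space is contractible.

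The main obstacle is the bookkeeping in verifying that $t_\alpha$ is compatible with the simplicial operators, in particular with the paper's conventions for $d_i$ and $s_i$ in the Appendix. I would check this directly on generators $\delta^j$ and $\sigma^j$. This is the same computation that underlies \cite[Section 2.3]{mv}, where any $\mathbb{A}^1$-homotopy induces a simplicial homotopy on $Sing_*$.
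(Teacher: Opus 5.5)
Your proposal is correct and is essentially the paper's argument. The paper builds the simplicial homotopy as $Sing_*(\gamma)\circ(\mathrm{Id},\theta)$, where $\gamma(s,t)=st$ and $\theta=\mathrm{Id}_{\A^1}$ is regarded as a $1$-simplex of $Sing_*(\A^1_k)$. Unwinding this on $m$-simplices gives exactly your formula $f\mapsto t_\alpha\cdot f$, and your concluding step (a simplicial homotopy equivalence with $\Delta^0$ is a weak equivalence) is the same conclusion the paper draws from $\Gamma(0)$ being constant and $\Gamma(1)=\mathrm{Id}$.
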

\begin{proof}
    The multiplication map 
    $$\gamma: \mathbb{A}^n_k \times_k \mathbb{A}^1_k \to \mathbb{A}^n_k \text{ given by } (s,t) \mapsto st$$
    gives naive $\mathbb{A}^1$-homotopy between the identity map and the constant map to a $k$-point. Since $Sing_*$ functor commutes with the product, so $\gamma$ induces a map
    $$Sing_*(\gamma): Sing_*(\mathbb{A}^n_k) \times Sing_*(\mathbb{A}^1_k) \to Sing_*(\mathbb{A}^n_k).$$
    Consider the $1$-simplex in $\theta= \mathrm{Id}_{\A^1}\in Sing_*(\A^1_k)$, given by $\theta(0)=\Spec(k)\xrightarrow{s_0}\A^1_k,\mathrm{\ and \ } \theta(1)=\Spec(k)\xrightarrow{s_1}\A^1_k$.
    Consider the composition of morphisms
    $$\Gamma: Sing_*(\mathbb{A}^n_k) \times \Delta^1 \xrightarrow{(Id, \theta)} Sing_*(\mathbb{A}^n_k) \times Sing_*(\mathbb{A}^1_k) \xrightarrow{Sing_*(\gamma)} Sing_*(\mathbb{A}^n_k).$$ 
    Then we have -
    \begin{align*}
    	\Gamma(0) &= Sing_*(\A^n_k)\xrightarrow{\mathrm{Const}_{(0,\dots,0)}} Sing_*(\A^n_k)\\
    	\Gamma(1) &= Sing_*(\A^n_k) \xrightarrow{\mathrm{Id}} Sing_*(\A^n_k).
    \end{align*}
     This defines a simplicial homotopy between identity map and a constant map. Hence $Sing_*(\A^n_k)(U)$ is contractible for any $U\in Sm/k$.
\end{proof}

\begin{remark} (see also Remark \ref{boundary and horn filling}(5)) \label{boundary lifting question}
     Let $U$ be a smooth $k$-scheme. Then $Sing_*(\mathbb{A}^m_k)(U)$ is Kan fibrant simplicial set, which is also contractible (Example \ref{kan fibrant An} and Lemma \ref{contractible An}). Therefore, the constant map $Sing_*(\mathbb{A}^m_k)(U) \to \bullet$ is a trivial Kan fibration. Therefore, $Sing_*(\mathbb{A}^m_k)(U)$ has boundary lifting property:
    
		\[\xymatrix{\partial \Delta^n \ar@{^{(}->}[d]^i \ar[r] &Sing_*(\mathbb{A}^m_k)(U)\\
		\Delta^n \ar@{.>}[ru]^{\exists}} \]
        This means, given a collection of morphisms $f_0, \dots, f_n: \Delta^{n-1}_a \times U \to \mathbb{A}^m_k$ satisfying the compatibility conditions (i.e., $d^if_j = d^{j-1}f_i$ for $0 \leq i<j \leq n$), there exists an $n$-simplex $f: \Delta^n_a \times U \to \mathbb{A}^m_k$ such that $d_i(f) = f_i$, for every $i$. 
        
\end{remark}

\subsection{A local weak equivalence}
	In this subsection we prove Theorem \ref{sing-etale-bundle},
which will be used substantially in the later sections. We start by recalling few definitions.  

 We recall the definition and few properties of $\mathbb{A}^1$-fibration from \cite[Chapter 2]{masuda} and \cite[Chapter 3]{miyanishicrm}. Let 
$k$ be an algebraically closed field of characteristic zero.
	\begin{definition} \label{bundle}
 Let $\mathcal{C}$ be a smooth algebraic space over $k$ and $X$ be a smooth variety over $k$. A morphism $\pi: X \to \sC$ is called an $\A^1$-bundle (with respect to the Zariski, Nisnevich or \'etale topology),
        if there exists a covering (Zariski, Nisnevich or \'etale covering respectively)
        $U \to \sC$ such that the fiber product $X \times_\sC U$, 
        which is a scheme 
       \cite[Corollary 4.2.3]{alpher},
        is isomorphic to $\A^1_k \times_k U$ over $U$. If $\pi$ is an $\mathbb{A}^1$-bundle with respect to the Zariski topology, then we will simply call $\pi$ to be an $\mathbb{A}^1$-bundle.
	\end{definition}
	\begin{remark} \label{a1 bundle properties}
	    \begin{enumerate}
	    
          \item Every fiber over a point $x$ of an $\mathbb{A}^1$-bundle is isomorphic to $\mathbb{A}^1_{\kappa(x)}$, where $\kappa(x)$ is the residue field of $x$. 
\item An $\A^1$-bundle $\pi: X \to \sC$ in the Nisnevich topology (hence in the Zariski topology also) is an $\A^1$-weak equivalence \cite[Section 3.1, Example 2.3]{mv}. In Theorem \ref{sing-etale-bundle}, we will prove that if $\pi$ is an \'etale $\A^1$-bundle, then $\pi_*: Sing_*(X) \to Sing_*(\sC)$ is a Nisnevich local weak equivalence. Therefore, an $\A^1$-bundle (in any topology Zariski, Nisnevich or \'etale) is an $\A^1$-weak equivalence (Corollary \ref{factorisation A1 equivalence}).
	        \item A line bundle is an $\mathbb{A}^1$-bundle (with respect to the Zariski topology). But the converse is not true. The $\A^1$-bundle in \cite[Example 2]{doub} does not admit a section, therefore it is not a line bundle.
            \item An $\mathbb{A}^1$-bundle corresponds to a line bundle, if the base is an affine scheme. Indeed, a line bundle over a scheme $X$ correspond to a $\mathbb{G}_m$-torsor over $X$; which correspond to an element of $H^1_{\text{Zar}}(X, \mathbb{G}_m)$ \cite[Corollary 4.7]{milne}. On the other hand, an $\mathbb{A}^1$-bundle over $X$ corresponds to an $\Aut(\A^1_k)$-torsor over $X$ (where, $\Aut(\A^1_k)$ is the automorphism group of the affine line $\A^1_k$); which corresponds to an element of $H^1_{\text{Zar}}(X, \Aut(\A^1_k))$. But the Zariski group sheaf $\Aut(\A^1_k)$ is isomorphic to the semidirect product
            $\mathbb{G}_m \ltimes \mathbb{G}_a$, which gives the short exact sequence of Zariski sheaves 
		$$0 \rightarrow \mathbb{G}_a \rightarrow \Aut(\A^1_k) \rightarrow \mathbb{G}_m \rightarrow 0.$$
		This gives a long exact sequence in cohomology of $X$ (in Zariski topology)
		$$\dots \rightarrow H^1_{\text{Zar}}(X, \mathbb{G}_a) \rightarrow H^1_{\text{Zar}}(X, \Aut(\A^1_k)) \rightarrow H^1_{\text{Zar}}(X, \mathbb{G}_m) \rightarrow 
H^2_{\text{Zar}}(X, \mathbb{G}_a) \rightarrow 
\dots$$
        Since $X$ is affine, so 
        $H^1_{\text{Zar}}(X, \mathbb{G}_a) \cong H^2_{\text{Zar}}(X, \mathbb{G}_a) \cong \{0\}$. Therefore,
        $$H^1_{\text{Zar}}(X, \Aut(\A^1_k)) \xrightarrow{\cong} H^1_{\text{Zar}}(X, \mathbb{G}_m).$$
	    \end{enumerate}
	\end{remark}
\begin{definition} \cite[Definition 2.1.1]{masuda} \label{a1 fibration definition}
Suppose, $F$ is a variety over $k$. A surjective morphism of smooth varieties 
        $$\pi: X \to C$$ 
   over $k$ is called an $F$-fibration, if the generic fiber $X_\eta := X\times_C \Spec \kappa(\eta)$ is isomorphic to $F \times_k \Spec \kappa(\eta)$ over $\kappa(\eta)$,
where $\eta$ is the generic point of $C$. If $F$ is isomorphic to $\A^1_k$, then we call the morphism $\pi$ to be an $\A^1$-fibration. 
	\end{definition}
\begin{notation}
For a morphism $\pi: X \to C$ of smooth varieties over $k$ and a point $a \in C$ with the residue field $\kappa(a)$, the fiber $\pi^{-1}(a)$ denotes the scheme-theoretic fiber of $\pi$ over $a$, which is by definition the fiber product $X \times_{\pi, a} \Spec (\kappa(a))$.
\end{notation}	
	\begin{remark} \label{properties a1 fibration}
Suppose, $X$ is a smooth affine surface and the base of the $\mathbb{A}^1$-fibration $\pi: X \to C$ is a smooth curve $C$. The base curve $C$ consists of the closed points (which have residue fields $k$) and the generic point $\eta$. 
    \begin{enumerate}
        \item The generic fiber of $\pi$ is isomorphic to $\mathbb{A}^1_K$, where $K=\kappa(\eta)$ is the function field of $C$ over $k$ \cite[Section 2.2]{kambayashi}.
        \item A closed fiber of $\pi$ is isomorphic to $\mathbb{A}^1_k$ if it is reduced and irreducible. 
        \item A closed fiber $F=\pi^{-1}(x)$ is called degenerate, if it is either reducible or non-reduced \cite[Definition 4]{doub}. Suppose,
        $F = \cup_{i} F_i$; $F_i$'s are the connected components of $F$. Then the local ring $\mathcal{O}_{F_i, \xi_i}$ of $F_i$ at its generic point $\xi_i$ is an Artinian local ring of length $m_i$ (say). The length $m_i =1$ if and only if $F_i$ is reduced. Scheme-theoretically, the fiber $F$ can be expressed as $$F=\sum_{i=0}^n m_i F_i.$$
        The associated reduced structure $(F_i)_{\text{red}}$ of $F_i$ is isomorphic to $\mathbb{A}^1_k$
        \cite[Chapter 3, Lemma 1.4.2]{miyanishicrm}.
   \item An $\mathbb{A}^1$-fibration is generically trivial i.e. there is an open subset $U$ of $C$ such that the fiber $X \times_C U$ over $U$ is isomorphic to $\mathbb{A}^1_U$ \cite[Lemma 1.1]{threefolds}. So in particular, there are only finitely many points $x \in C(k)$ such that $\pi^{-1}(x)$ is not isomorphic to $\mathbb{A}^1_k$. 
   \end{enumerate}
	\end{remark}
	
	\begin{example} [\textbf{$\mathbb{A}^1$-fibrations}]\label{d-sphere}\hfill
    \begin{enumerate}
        \item The first family of examples of $\mathbb{A}^1$-fibrations are $\mathbb{A}^1$-bundles, since every fiber over a point of an $\mathbb{A}^1$-bundle
         is isomorphic to $\mathbb{A}^1$ (Remark \ref{a1 bundle properties}).
The converse is true for an $\A^1$-fibration $\pi: X \to C$ from a smooth affine surface $X$ to a smooth curve $C$, if every scheme-theoretic fiber of $\pi$ is irreducible and reduced \cite[Proposition 3]{doub}.
        \item (\textbf{Danielewski surface} \cite[Chapter 5, Section 5.3.4]{as}) An $\A^1$-fibration is not necessarily an $\A^1$-bundle or an $\A^1$-weak equivalence. 
For example, suppose $X$ is a Danielewski surface given by
$$X \equiv \{x^nz=P(y)\} \subset \mathbb{A}^3_k = \Spec\ k[x,y,z],$$ 
where $P(y) \in k[t]$ is a separable polynomial.
If the degree of $P(y)$ is at least $2$ then $k[x,y,z]/(x^nz-P(y))$ is not a UFD, so the surface $X$ is not isomorphic to affine plane. Indeed, if $deg(P(y))=d \geq 2$, then we can write $P(y) = a \prod_{i=1}^{d}(y - \alpha_i)$, for some $a, \alpha_1, \dots, \alpha_d \in k$. So in $\sO(X)$, there are two different ways to express $x^nz$ as products of irreducibles
$$x^nz = a \prod_{i=1}^{d}(y - \alpha_i) \in \sO(X).$$
Thus $\sO(X)$ is not a UFD.

Consider the morphism 
$$\text{pr}_x: X \to \mathbb{A}^1_k \text{ defined as } (x,y,z) \mapsto x$$
is an $\A^1$-fibration, since 
$$\text{pr}_x^{-1}(\mathbb{G}_m = \{x \neq 0\}) \cong \mathbb{A}^1_k \times_k \mathbb{G}_m.$$ 
In this example, all fibers of $\text{pr}_x$ are reduced and only the fiber over $0$ is reducible, which is the disjoint union of $d$-copies of $\mathbb{A}^1_k$'s, where $d=deg(P(y))$. The surface $X$ is $\A^1$-weak equivalent to the wedge sum of $(d-1)$-copies of $\mathbb{P}^1_k$ \cite[Chapter 5, Proposition 5.3.4.3]{as} (which is again $\A^1$-weakly equivalent to the affine line with $d$-many origins, see Section \ref{section a1 homotopy types}). 
In particular, if we take $d=2$ and $X \equiv \{x^2z=y(y-1)\}$, then $X$ is $\A^1$-weakly equivalent to $\mathbb{P}^1_k$ (which is again $\A^1$-weakly equivalent to the affine line with double origin, see Section \ref{section a1 homotopy types}).


        \item An $\mathbb{A}^1$-fibration may have a non-reduced fiber. Consider the surface, $X \subseteq \Spec\ \mathbb{C}[x,y,z]$ given by $x^2z = y^2-x$, which admits an $\mathbb{A}^1$-fibration \cite[Example 5]{doub}
        $$\text{pr}_x: X \to \mathbb{A}^1_{\mathbb{C}} \text{ given by } (x,y,z) \mapsto x..$$
        Here the fiber $\text{pr}_x^{-1}(a)$ is isomorphic to $\mathbb{A}^1_\mathbb{C}$, if $a$ is a non-zero closed point of $\mathbb{A}^1_{\mathbb{C}}$. If $a=0$, the fiber $\text{pr}_x^{-1}(0)$ is isomorphic to the scheme $\Spec\ (\frac{\mathbb{C}[y]}{(y^2)}[z])$, therefore it is non-reduced, but the associated reduced structure $\text{pr}^{-1}_x(0)_{\text{red}}$ is isomorphic to $\A^1_k$ (see also Example \ref{multi-section example}).
    \end{enumerate}
	\end{example}
\begin{remark}
The $\A^1$-fibrations are a particular class of morphisms, used to study the affine surfaces. An $\A^1$-fibration is not a fibration in the $\A^1$-model structure, in general. For example, suppose $X$ is the affine surface as in Example \ref{d-sphere}(3). The morphism $\text{pr}_x$ is not a fibration in the $\A^1$-model structure. Indeed, $\text{pr}_x$ does not admit a section such that the following diagram has a lift;
\[\xymatrixcolsep{3pc}
\xymatrix{\Spec\ k \ar[d]_{0} \ar[r]^{(0,0,0)} & X \ar[d]^{\text{pr}_x}\\
			\A^1_k\ar@{.>}[ru]^{\nexists} \ar@{=}[r]&\A^1_k} 
\]
that is, there does not exist any morphism $\A^1_k \to X$ such that both the triangles commute (see Example \ref{multi-section example}(3)). So $\text{pr}_x$ does not have right lifting property with respect to the trivial $\mathbb{A}^1$- cofibration $\Spec k \xrightarrow{s_0} \A^1_k$.
\end{remark}
	Let us recall the structure theorem for $\A^1$-fibration:
	\begin{theorem} \cite[Theorem 9																																																																																																																																]{doub}\label{factor-A1-fibration}
		Suppose $X$ is a smooth affine surface over $k$ and $\pi: X \to C$ is an $\mathbb{A}^1$-fibration over a smooth curve $C$. Then $\pi$ factors as an \'etale locally trivial $\mathbb{A}^1$-bundle $\theta: X \to \mathcal{C}$ over a smooth algebraic space $\mathcal{C}$ of dimension $1$ (possibly not a scheme) along with the structure morphism $\alpha: \mathcal{C} \to C$, which is surjective, quasi-finite and birational morphism of finite type.
	\end{theorem}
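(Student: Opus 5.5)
The plan is to construct $\mathcal{C}$ directly as the quotient of an étale groupoid built from local transversal slices to the fibers of $\pi$. Then $\theta$ is the induced map, and local triviality is checked on these slices.

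\textbf{Step 1 (reduction to finitely many bad points).} By Remark \ref{properties a1 fibration}(4) there is a dense open $U \subset C$ with $\pi^{-1}(U) \cong \mathbb{A}^1_U$, and only finitely many closed points $c_1, \dots, c_r$ have degenerate fibers. Write $\pi^{-1}(c_j) = \sum_i m_{ij} F_{ij}$ with $(F_{ij})_{\mathrm{red}} \cong \mathbb{A}^1_k$, as in Remark \ref{properties a1 fibration}(3). Over $U$ we set $\mathcal{C}$ to be $U$ itself. Away from the $c_j$ the construction is therefore trivial, and all the work is local near each $c_j$.

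\textbf{Step 2 (slices through each fiber component).} Fix a bad point $c$, a local parameter $t$ at $c$, and a component $F = F_{i}$ of multiplicity $m$. Choose a closed point $x \in F$ where $F_{\mathrm{red}}$ is the only component through $x$. Take a smooth affine curve $S_F \subset X$ through $x$ that is transversal to $F_{\mathrm{red}}$ at $x$; it can be cut out by a regular function on $X$ which restricts to a coordinate on $F_{\mathrm{red}}$. Shrink $S_F$ so that it meets no other fiber component. Near $x$ we have $\pi^*t = u\, s^m$, with $s$ a local equation of $F_{\mathrm{red}}$ and $u$ a unit. Hence $\pi|_{S_F} : S_F \to C$ is finite, of ramification index exactly $m$ at $x$, and étale elsewhere after shrinking.

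Let $X_F$ be the normalization of $X \times_C S_F$, restricted to the connected component containing the diagonal copy of $S_F$. Then $X_F \to S_F$ is an $\mathbb{A}^1$-fibration with a section. The base change by the degree-$m$ ramified cover kills the multiplicity, so the fiber over $x$ is reduced and irreducible. By \cite[Proposition 3]{doub} (Example \ref{d-sphere}(1)), $X_F \to S_F$ is an $\mathbb{A}^1$-bundle. After shrinking $S_F$ to an affine neighbourhood it is trivial, by Remark \ref{a1 bundle properties}(4), since a line bundle on a small affine curve is trivial. In addition, the natural map $X_F \to X$ is étale onto a neighbourhood of $F$ minus the other components.

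\textbf{Step 3 (the groupoid and the quotient).} Set $S = U \sqcup \bigsqcup_{c,F} S_F$. Set $R = S \times_{X} S$, where points of $S$ are identified when they lie in the same fiber of the trivialized bundles; concretely, $R$ is the fiber product of the maps $S \leftarrow X_S \to X$, with $X_S := \mathbb{A}^1_U \sqcup \bigsqcup X_F$, modded out by the fiber direction. I will check that both projections $R \rightrightarrows S$ are étale. Over $U$ this is clear. Near $x \in S_F$, the only identifications come from the deck action of the local $\mu_m$-cover and from $S_F \cap \pi^{-1}(U)$, so étaleness reduces to the Kummer computation $s \mapsto \zeta s$. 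The relation $R$ is also quasi-compact and separated enough to give an algebraic space $\mathcal{C} = S/R$ of dimension $1$. It is smooth because $S$ is.

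The map $\pi$ descends to $\alpha : \mathcal{C} \to C$. This map is birational because it is an isomorphism over $U$. It is quasi-finite because the fiber over $c_j$ is the finite set of components of $\pi^{-1}(c_j)$. It is surjective and of finite type. The map $\theta : X \to \mathcal{C}$ is defined on the étale cover $X_S \to X$ as the bundle projection and descends because it is $R$-equivariant. By construction $X \times_{\mathcal{C}} S \cong X_S \cong \mathbb{A}^1 \times S$, so $\theta$ is an étale locally trivial $\mathbb{A}^1$-bundle.

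\textbf{Main obstacle.} The delicate point is Step 2/3 at non-reduced components. There I must verify two things: that $X \times_{\mathcal{C}} S_F$ really is the normalized base change, so no extra nilpotent or non-normal structure survives; and that the induced relation is étale rather than merely quasi-finite. My first approach is a local computation in the complete local ring at $x$, using $\pi^*t = u s^m$. After an étale change of $s$ one can take $u = 1$, and then the normalization of $k[[s,y]] \otimes_{k[[t]]} k[[s']]$ splits as $m$ copies of $k[[s',y]]$, permuted by $\mu_m$. This exhibits both the reducedness of the fiber and the étaleness of $R$.
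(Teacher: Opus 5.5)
There is a genuine gap in Step 2, and it sits exactly where the content of the theorem lies.

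\textbf{Step 2 as written.} The scheme $X\times_C S_F$ is irreducible. It is cut out by one equation in the smooth threefold $X\times S_F$. After shrinking, its part over $S_F\setminus\{x\}$ is $\mathbb{A}^1\times(S_F\setminus\{x\})$, and its fiber over $x$ is only one-dimensional. Hence its normalization $X_F$ is connected, and ``the connected component containing the diagonal'' is all of $X_F$.

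The fiber of $X_F$ over $x$ lies over the whole of $\pi^{-1}(c)$, not just over $F$. By your own completed-local computation, $F$ alone contributes $m$ pairwise disjoint reduced copies of $F_{\mathrm{red}}$. A component $F_i$ of multiplicity $m_i$ contributes $\gcd(m,m_i)$ copies of multiplicity $m_i/\gcd(m,m_i)$, and $X_F\to X$ is ramified along them unless $m\mid m_i$.

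So for every degenerate fiber the fiber of $X_F$ over $x$ is reducible, and in general not reduced. Consequently $X_F\to S_F$ is not an $\mathbb{A}^1$-bundle, \cite[Proposition 3]{doub} does not apply, and $X_S\to X$ need not even be étale. You must replace $X_F$ by the open subset $X_F^\circ$ obtained by deleting every component of the fiber over $x$ except the single sheet through the point of the diagonal section.

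\textbf{Why this matters in Step 3.} The choice of one sheet is what makes $\mathcal{C}$ an algebraic space rather than a stack. The condition you must verify in Step 3 is that $R\to S\times S$ is a monomorphism, i.e.\ that $R\rightrightarrows S$ is an étale \emph{equivalence relation}. ``Quasi-compact and separated enough'' is beside the point, since $\mathcal{C}$ is non-separated anyway.

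Suppose $R$ near $x$ were, as you describe, generated by the local $\mu_m$ deck action, i.e.\ the $m$ branches $s_2=\zeta s_1$ of the normalization of $S_F\times_C S_F$, all passing through $(x,x)$. Then $x$ would have stabilizer $\mu_m$, and $S/R$ would be a Deligne--Mumford stack with a point of type $[\mathbb{A}^1/\mu_m]$. Moreover, no $\mathbb{A}^1$-bundle over it could have the scheme $X$ as total space, because $\mu_m$ acting on $\mathbb{A}^1$ by affine automorphisms always has a fixed point.

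\textbf{How to repair it.} With $X_F^\circ$, compute $R$ from $X_S\times_X X_S$ instead. Only one sheet lies over $x$, and it maps isomorphically onto $F_{\mathrm{red}}$. So the only points of $X_S\times_X X_S$ over $(x,x)$ lie on the open and closed diagonal. Near $(x,x)$ one gets $R=\Delta_{S_F}\sqcup\bigsqcup_{\zeta\neq 1}\bigl(\{s_2=\zeta s_1\}\setminus\{(x,x)\}\bigr)$. Both projections are open immersions on each piece and $R\to S\times S$ is injective. Locally $\mathcal{C}$ is then the affine line with $m$ origins modulo a free $\mu_m$-action, as in \cite[Example 12]{doub}.

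You also need that $X_S\to X$ maps each fiber of $X_S\to S$ isomorphically onto a fiber of $\pi$ or onto some $(F_i)_{\mathrm{red}}$. This makes $X_S\times_X X_S$ the pullback of $X_S\to S$ along $R$, which is what lets $\theta$ descend with $X\times_{\mathcal{C}}S\cong X_S$.

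With these points supplied, your one-step groupoid built from transversal slices becomes a legitimate alternative to the construction the paper recalls from \cite{doub}. That construction proceeds in two steps: first the non-separated curve $\check{C}$ separates the components, then an étale equivalence relation handles each non-reduced fiber.
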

   
    \begin{construction} \label{construction of algebraic space}
Following \cite[Theorem 9]{doub}, the construction of the algebraic space $\mathcal{C}$ in Theorem \ref{factor-A1-fibration} consists of two steps: 
\begin{enumerate}
	\item 
	Suppose, $c_1, \dots, c_r$ are finitely many $k$-points of $C$ such that $\pi^{-1}(c_i)$ is not irreducible and $\pi^{-1}(c_i)$ consists $d_i$-many irreducible components.
	The morphism $\pi: X \to C$ is factored as 
	$$X \xrightarrow{\check{\pi}} \check{C} \xrightarrow{\check{p}} C,$$
	where $\check{C}$ is a non-separated scheme obtained by replacing the points $c_i$ by $d_i$-many points and $\check{p}: \check{C} \to C$ is the canonical morphism (see also \cite[Example 7]{doub}). Every fiber of $\check{\pi}$ is irreducible 
	(fiber of $\check{\pi}$ is non-reduced if and only if the corresponding fiber of $\pi$ is non-reduced) and thus the reduced structure of each fiber is isomorphic to $\A^1$. If every fiber of $\pi$ is irreducible, then $\check{C}$ is just $C$ itself and $\check{p}$ is the identity map.
	\item The algebraic space $\sC$ is obtained in finitely many steps, one for each non-reduced fiber of $\check{\pi}$. At each step, for a non-reduced fiber of $\check{\pi}$ at $\check{c} \in \check{C}$, an intermediate algebraic space is constructed by replacing a Zariski neighbourhood of $\check{c}$ in $\check{C}$, using an \'etale equivalence relation. The morphism $\check{\pi}$ is further factored as 
	$$X \xrightarrow{\theta} \mathcal{C} \xrightarrow{p} \check{C},$$
	where $\theta$ is an \'etale locally trivial $\A^1$-bundle and $\alpha = \check{p} \circ p$. As in (a), if every fiber of $\pi$ is reduced, then the algebraic space $\sC$ is just $\check{C}$ itself (a scheme) and $p$ is the identity map. 
	\item A closed point of $\sC$ corresponds to a closed point of $\check{C}$ and a closed point $\check{c} \in \check{C}$ corresponds to an irreducible component (which is also a connected component) of the fiber $\pi^{-1}(\check{p}(\check{c}))$.
\end{enumerate}
    \end{construction}

          \begin{remark} \label{comments of structure theorem}
          	If every fiber $\pi^{-1}(x)$ is isomorphic to $\mathbb{A}^1$, then $\pi$ is actually an $\mathbb{A}^1$-bundle \cite[Proposition 3]{doub}. An $\mathbb{A}^1$-bundle between schemes is an $\mathbb{A}^1$-weak equivalence \cite[Theorem 5.3.1.3]{as}.
         
    \end{remark}

Now we will prove the main theorem in this article (Theorem \ref{sing-etale-bundle}), which implies that the morphism $\theta$ in Theorem \ref{factor-A1-fibration} induces
a Nisnevich local weak equivalence $\theta_*: Sing_*(X) \to Sing_*(\mathcal{C})$. So, $\theta$ is an $\A^1$-weak equivalence.
\begin{theorem} \label{sing-etale-bundle}
		Let $X$ be a smooth variety over $k$ and $\mathcal{C}$ be a smooth algebraic space over $k$. Suppose, $\rho: X \to \mathcal{C}$ is an \'etale locally trivial $\mathbb{A}^1$-bundle.
		Then the induced morphism $\rho_*: Sing_*(X) \to Sing_*(\mathcal{C})$ is a Nisnevich local weak equivalence.
	\end{theorem}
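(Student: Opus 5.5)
Following the abstract, the plan is to prove the stronger statement that for every essentially smooth Henselian local $k$-scheme $\sO$ (indeed for every affine $U$ with enough local triviality), the map of simplicial sets
$$\rho_*(\sO): Sing_*(X)(\sO) \to Sing_*(\sC)(\sO)$$
is a trivial Kan fibration. A trivial Kan fibration is in particular a weak equivalence, so this gives the Nisnevich local weak equivalence. Concretely, we must solve the lifting problem for $\partial\Delta^n \hookrightarrow \Delta^n$. Suppose we are given $n$-simplex $g: \Delta^n_a\times \sO \to \sC$ and compatible $(n-1)$-simplices $f_0,\dots,f_n: \Delta^{n-1}_a\times\sO \to X$ with $\rho\circ f_i = d^i g$. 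We must produce $f:\Delta^n_a\times \sO\to X$ with $\rho\circ f = g$ and $d_i f = f_i$.

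The key step is to pull back the bundle along $g$. The fibre product $E := X\times_{\sC,g}(\Delta^n_a\times\sO)$ is a scheme by \cite[Corollary 4.2.3]{alpher}, and it is an \'etale locally trivial $\A^1$-bundle over the affine scheme $\Delta^n_a\times \sO$. Lifts $f$ over $g$ correspond exactly to sections of $E\to \Delta^n_a\times\sO$. I would first show that $E$ is Zariski locally trivial, and in fact a line bundle torsor, i.e.\ an $\Aut(\A^1)=\G_m\ltimes\G_a$-torsor. For this I would use Hilbert 90: the \'etale torsor has a $\G_m$-part in $H^1_{et}(-,\G_m)=H^1_{Zar}(-,\G_m)$, and the $\G_a$-part lies in $H^1_{et}(-,\G_a)$, which vanishes on an affine base. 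Together these show that $E$ is an affine bundle modelled on a line bundle $L$ on $\Delta^n_a\times\sO\cong \A^n_{\sO}$. Since $\sO$ is a regular local ring, $\Pic(\A^n_\sO)=\Pic(\sO)=0$ by homotopy invariance of $\Pic$ for regular schemes. Hence $L$ is trivial and the $\G_a$-torsor $E$ is trivial as well (by $H^1(\text{affine},\sO)=0$), so $E\cong \A^1\times\Delta^n_a\times\sO$ over the base. This trivialisation is the step I expect to be the main obstacle. It requires care that the transition structure group really is $\Aut(\A^1)$ in the \'etale topology (the automorphisms of $\A^1_R$ over a reduced base $R$ are affine), and that the pullback to the non-Henselian affine scheme $\A^n_\sO$ is still trivial. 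Using reducedness of smooth bases and the Pic argument should handle this.

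With $E$ trivialised, the lifting problem becomes a boundary-filling problem for $Sing_*(\A^1_k)(\sO)$. The $f_i$ correspond to compatible maps $\Delta^{n-1}_a\times\sO\to\A^1$, obtained as sections of the restricted trivial bundle over the faces. Note that the trivialisation restricted to faces agrees with the pulled-back bundle there, so compatibility is preserved. By Remark \ref{boundary lifting question}, which uses Example \ref{kan fibrant An} and Lemma \ref{contractible An}, there is an $n$-simplex $\Delta^n_a\times\sO\to\A^1$ with the prescribed faces. Composing with the trivialisation and the projection $E\to X$ gives the desired $f$. This shows that $\rho_*(\sO)$ has the right lifting property against all boundary inclusions. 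One should also note surjectivity in degree $0$ (the case $n=0$), which is just the existence of a section of $E$ over $\sO$; it is covered by the same argument. Hence $\rho_*$ is a stalkwise trivial fibration, and therefore a Nisnevich local weak equivalence.
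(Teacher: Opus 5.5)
Your proposal is correct and follows the paper's proof essentially step for step. Like the paper, you show that $Sing_*(X)(\sO)\to Sing_*(\sC)(\sO)$ is a trivial Kan fibration: pull $\rho$ back along $g\colon \A^n_\sO\to\sC$, trivialise the resulting \'etale $\A^1$-bundle using $H^1_{\text{\'et}}(\A^n_\sO,\Aut(\A^1_k))$ and $\Pic(\A^n_\sO)=0$, and fill the boundary in the contractible Kan complex $Sing_*(\A^1_k)(\sO)$. Your extra care with the \'etale-to-Zariski step (Hilbert 90 for the $\G_m$-part, vanishing of quasi-coherent $H^1$ on an affine scheme for the $\G_a$-part, and reducedness to identify the automorphism sheaf with $\G_m\ltimes\G_a$) justifies an isomorphism that the paper only asserts.
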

	\begin{proof}
		We will show that for every essentially smooth Henselian local scheme $\mathcal{O}$ over $k$, the morphism between the simplicial sets
		$$\rho_*: Sing_*(X)(\mathcal{O}) \to Sing_*(\mathcal{C})(\mathcal{O})$$
		is a trivial Kan fibration.
		Therefore, we need to find a lift of the following given commutative square

        \begin{equation}
		\label{lift}
	\xymatrix{\partial \Delta^n \ar@{^{(}->}[d]^i \ar[r]^{f \ \ \ \ } &Sing_*(X)(\sO)\ar[d]^{\rho_*}\\
	\Delta^n \ar@{.>}[ru]^{\exists} \ar[r]^-{g} &Sing_*(\sC)(\sO)} 
       \end{equation}  
The $n$-simplex $g: \Delta^n \to Sing_*(\sC)(\sO)$ is given by a morphism (we denote it by $g$ again) $g: \A^n_\sO \to \sC$.	 The morphism $f: \partial \Delta^n \to Sing_*(X)(\mathcal{O})$ is a collection of $(n+1)$-many morphisms $f_j:\mathbb{A}^{n-1}_{\mathcal{O}} \to X$ ($0 \leq j \leq n$) satisfying the compatibility conditions ($d^if_j = d^{j-1}f_i$ for $0 \leq i<j \leq n$). This is 
equivalent to find the lift of the diagram (see Remark \ref{boundary and horn filling}(5))
 \begin{equation}
  \label{liftequivalent}
\xymatrix{
\partial\Delta^n_\sO \ar[r]^{f} \ar[d]_i & X  \ar[d]^\rho\\
\mathbb{A}^n_{\mathcal{O}} \ar[r]^{g}\ar@{..>}[ur]^{\exists} & \mathcal{C}
}
\end{equation}
where $\partial\Delta^n_\sO:= \partial \Delta^n_a \times_k \sO$ (Remark \ref{boundary and horn filling}(4)).
where $d^p_q$ is the boundary map $d^p:\A^{n-1}_\sO \to \A^n_\sO$ to the $q$-th component in the coproduct (see Section \ref{detailed horn formula}, for the boundary maps $d^i$).
		Consider the pullback diagram in the category of algebraic spaces

        \begin{equation}
        	\label{pullback}
        \xymatrix{\mathbb{A}^n_\mathcal{O}\times_\mathcal{C}X \ar[d]^{\pr_1} \ar[r]^{\ \ \ \pr_2} &X \ar[d]^{\rho}\\
        	\mathbb{A}^n_\mathcal{O} \ar[r]^g & \sC }
        \end{equation}
        
		Since $\rho$ is an étale locally trivial $\mathbb{A}^1$-bundle, so $\text{pr}_1$ is also an étale locally trivial $\mathbb{A}^1$-bundle.
The pullback algebraic space $\mathbb{A}^n_{\mathcal{O}} \times_{\mathcal{C}} X$ is actually a scheme \cite[Corollary 4.2.3]{alpher}.
        We claim that 
		$\text{pr}_1$
		is isomorphic to the trivial line bundle over $\mathbb{A}^n_{\mathcal{O}}$. Indeed, an \'etale $\A^1$-bundle over $\A^n_\sO$ is given by a class in $H^1_{\text{\'et}}(\A^n_\sO,\Aut(\A^1_k)) \cong H^1_{\text{Zar}}(\A^n_\sO,\Aut(\A^1_k))$ and furthermore, a Zariski $\A^1$-bundle is isomorphic to a line bundle, if the base is affine
(Remark \ref{a1 bundle properties}(4)).
So,
$$H^1_{\text{\'et}}(\A^n_\sO,\Aut(\A^1_k)) \cong H^1_{\text{Zar}}(\A^n_\sO,\Aut(\A^1_k)) \cong H^1_{\text{Zar}}(\A^n_\sO, \G_m) \cong Pic(\mathbb{A}^n_{\mathcal{O}}) = 0,$$
		since the Picard group functor is $\mathbb{A}^1$-invariant and any projective module over a local ring is free. 
		 Hence, the \'etale $\mathbb{A}^1$-bundle $\text{pr}_1: \mathbb{A}^n_{\mathcal{O}} \times_{\mathcal{C}} X \to \mathbb{A}^n_{\mathcal{O}}$ is isomorphic to the projection $p_1: \mathbb{A}^n_{\mathcal{O}} \times_k \mathbb{A}^1_k \to \mathbb{A}^n_{\mathcal{O}}$.
		Thus we have the following commutative diagram

		\[
		\xymatrix{\mathbb{A}^n_\mathcal{O}\times_k \mathbb{A}^1_k \ar[rd]_{p_1} \ar[r]^{\cong}_\phi &\mathbb{A}^n_\mathcal{O}\times_\mathcal{C}X \ar[d]^{\pr_1} \ar[r]^<<<<{\pr_2} &X \ar[d]^{\rho}\\
		 &\A^n_{\sO} \ar[r]^g &\sC}
		\]
		Since $p_1$ is the trivial bundle, so it admits a global section and a global section of $p_1$ is determined by a morphism $\mathbb{A}^n_{\mathcal{O}} \to \mathbb{A}^1_k$. 
		The commutative diagram \eqref{liftequivalent} and the pullback diagram \eqref{pullback} induces the morphism 
		$$h:  \partial\Delta^n_\sO \to  \mathbb{A}^n_{\mathcal{O}} \times_{\mathcal{C}} X. $$
		Consider the composition of the morphisms 
		$$p_2 \circ \phi^{-1} \circ h:  \partial\Delta^n_\sO \to \mathbb{A}^1_k,$$
		where $p_2: \mathbb{A}^n_{\mathcal{O}} \times_k \mathbb{A}^1_k \to \mathbb{A}^1_k$ is the projection.
		Now for any $k$-scheme $U$, the simplicial set $Sing_*(\mathbb{A}^1_k)(U)$ is a Kan fibrant simplicial set, which is also contractible simplicial set (Example \ref{kan fibrant An} and Lemma \ref{contractible An}). So the following commutative diagram has a lift

		\[\xymatrixcolsep{5pc}
		\xymatrix{ \partial\Delta^n_\sO \ar@{^{(}->}[d]^i \ar[r]^{\ \ \ p_2\circ \phi^{-1}\circ h} &{\mathbb{A}^1_k}\\
		\A^n_\sO \ar@{.>}[ur]^{\exists s}}
		\]
Let $\Tilde{s}: \mathbb{A}^n_{\mathcal{O}} \to \mathbb{A}^n_{\mathcal{O}} \times_k \mathbb{A}^1_k$ be the section  of $p_1$, which is determined by the lift $s: \mathbb{A}^n_{\mathcal{O}} \to \mathbb{A}^1_k$. Then the desired lift of the diagram \eqref{liftequivalent} is given by 
		$$\text{pr}_2 \circ \phi \circ \Tilde{s}: \mathbb{A}^n_{\mathcal{O}} \to X.$$
		
		Indeed,
		\begin{align*}
			\rho \circ \text{pr}_2 \circ \phi \circ \Tilde{s} &= g \circ \text{pr}_1 \circ \phi \circ \Tilde{s} \\
			& = g \circ p_1 \circ \Tilde{s} \\
			& = g,
		\end{align*}
so the lower triangle in diagram \ref{liftequivalent} commutes and for the commutativity of the upper triangle in diagram \ref{liftequivalent}, we will prove that 
$$\text{pr}_2 \circ \phi \circ \Tilde{s} \circ d^i = f_i, \text{ for every }i, 0 \leq i \leq n$$
where $d^i: \A^{n-1}_\sO \to \A^n_\sO$ is the $i$-th face inclusion (see Section \ref{detailed horn formula}).
Now, for $u \in  \mathbb{A}^{n-1}_{\mathcal{O}}$,
		\begin{align*}
			\text{pr}_2 \circ \phi \circ \Tilde{s} \circ d^i (u) &=  \text{pr}_2 \circ \phi(d^i(u),(s\circ d^i)(u))\\
			&=\text{pr}_2 \circ \phi(d^i(u), p_2\circ \phi^{-1} \circ h(u))\\
			&=\text{pr}_2 \circ \phi(d^i(u), p_2\circ \phi^{-1}(d^i(u),f_i(u))) \\
			&=f_i(u).
		\end{align*}
Hence, diagram \ref{liftequivalent} has a lift.
		Therefore, 
$\rho_*: Sing_*(X)(\mathcal{O}) \to Sing_*(\mathcal{C})(\mathcal{O})$ is a trivial Kan fibration. Hence the morphism $\rho_*: Sing_*(X) \to Sing_*(\mathcal{C})$ is a Nisnevich local weak equivalence.
		
	\end{proof}
    Since the canonical morphism $\mathcal{X} \to Sing_*(\mathcal{X})$ is an $\mathbb{A}^1$-weak equivalence \cite[Corollary 3.8, Section 2.3]{mv}, therefore we have the following immediate corollary:
	\begin{corollary} \label{factorisation A1 equivalence}
	    Let $X$ be a smooth variety over $k$ and $\mathcal{C}$ be a smooth algebraic space over $k$. Suppose, $\rho: X \to \mathcal{C}$ is an \'etale locally trivial $\mathbb{A}^1$-bundle. Then $\rho$ is an $\mathbb{A}^1$-weak equivalence. So, the morphism $\theta$ which appears in the factorisation of the $\A^1$-fibration $\pi:X \to C$ in Theorem \ref{factor-A1-fibration} is an $\A^1$-weak equivalence.
	\end{corollary}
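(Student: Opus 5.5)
The corollary is a formal consequence of Theorem \ref{sing-etale-bundle} together with the fact that $\mathcal{X} \to Sing_*(\mathcal{X})$ is an $\mathbb{A}^1$-weak equivalence for every space $\mathcal{X}$ \cite[Corollary 3.8, Section 2.3]{mv}. I would regard both $X$ and the algebraic space $\mathcal{C}$ as simplicial presheaves on $Sm/k$ via their functors of points. Then I would use naturality of the unit $\mathrm{Id} \to Sing_*$ to get a commutative square
\[
\xymatrix{X \ar[r] \ar[d]_{\rho} & Sing_*(X) \ar[d]^{\rho_*}\\
\mathcal{C} \ar[r] & Sing_*(\mathcal{C})}
\]
in $\Delta^{op}PSh(Sm/k)$.

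The two horizontal maps are $\mathbb{A}^1$-weak equivalences by the cited result of Morel--Voevodsky. The right vertical map $\rho_*$ is a Nisnevich local weak equivalence by Theorem \ref{sing-etale-bundle}. Since the $\mathbb{A}^1$-model structure is a left Bousfield localisation of the Nisnevich local one, every Nisnevich local weak equivalence is an $\mathbb{A}^1$-weak equivalence, so $\rho_*$ is one too. By two-out-of-three for weak equivalences in the $\mathbb{A}^1$-model structure, applied to the composite $X \to Sing_*(X) \to Sing_*(\mathcal{C})$ and the other path through $\mathcal{C}$, the map $\rho$ is an $\mathbb{A}^1$-weak equivalence.

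For the second sentence, Theorem \ref{factor-A1-fibration} gives the map $\theta: X \to \mathcal{C}$ as an étale locally trivial $\mathbb{A}^1$-bundle over a smooth algebraic space. Moreover, $X$ is a smooth affine surface, hence a smooth variety. So the first part applies directly to $\theta$.

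The only point needing care is the first step. Here $\mathcal{C}$ is an algebraic space, not an object of $Sm/k$, and one must check that its functor of points is a legitimate space, i.e. a (simplicially constant) presheaf on $Sm/k$. This holds because any algebraic space represents a sheaf on the big étale site, hence a presheaf on $Sm/k$. The result \cite[Corollary 3.8, Section 2.3]{mv} holds for arbitrary simplicial presheaves, so no further obstacle arises.
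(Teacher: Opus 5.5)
Your proposal is correct and is the paper's argument. The paper also derives the corollary immediately from Theorem \ref{sing-etale-bundle} and the fact that $\mathcal{X} \to Sing_*(\mathcal{X})$ is an $\mathbb{A}^1$-weak equivalence \cite[Corollary 3.8, Section 2.3]{mv}, using the naturality square and two-out-of-three as you do, with your remark about viewing the algebraic space $\mathcal{C}$ as a presheaf on $Sm/k$ being a harmless clarification the paper leaves implicit.
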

	\begin{remark}~
				 \begin{enumerate}
			
			\item If every fiber of $\pi$ is reduced and some fiber is reducible, then the algebraic space in Theorem \ref{factor-A1-fibration} is a non-separated scheme. In Example \ref{d-sphere}(2), the $\mathbb{A}^1$-fibration $\text{pr}_x: X \to \A^1_k$ factors through an \'etale locally trivial $\mathbb{A}^1$-bundle over the affine line with $d$-origins, where $d=deg(P(y))$. So, $X$ is $\mathbb{A}^1$-weakly equivalent to the affine line with $d$-origins (Corollary \ref{factorisation A1 equivalence}).
			\item If $\pi$ has a non-reduced fiber, then the algebraic space in Theorem \ref{factor-A1-fibration} is not a scheme. In Example \ref{d-sphere} (3), the $\mathbb{A}^1$-fibration $\text{pr}_x: X \to \A^1_k$ factors through an \'etale locally trivial $\mathbb{A}^1$-bundle over an algebraic space $\mathcal{C}$, which is the quotient of the affine line with double origin with free $\mathbb{Z}/2\mathbb{Z}$-action \cite[Example 12]{doub}. So, $X$ is $\A^1$-weakly equivalent to $\sC$ (Corollary \ref{factorisation A1 equivalence}). Thus in particular, $Pic(X)$ is isomorphic to $\Z/2\Z$.
		\end{enumerate}
	\end{remark}
By proceeding the same way as of proof of Theorem \ref{sing-etale-bundle}, the diagram \ref{lift} has a lift, if $\rho: X \to \sC$ is an vector bundle. Indeed, a vector bundle over $\mathbb{A}^n_\mathcal{O}$ corresponds to a $GL_n$-torsor over $\mathbb{A}^n_\mathcal{O}$ and the $\mathbb{A}^1$-invariance of the isomorphism class of vector bundles over affine base \cite[Theorem 1]{asokI} implies that a vector bundle over $\mathbb{A}^n_{\mathcal{O}}$ corresponds to a vector bundle over $\mathcal{O}$.  By Hilbert's Theorem 90, we have the isomorphism
$$H^1_{Zar}(\mathcal{O}, GL_n) \cong H^1_{Nis}(\mathcal{O}, GL_n) \cong H^1_{\text{\'et}}(\mathcal{O}, GL_n).$$
Finally in Zariski topology, a vector bundle (or equivalently, a projective module) over a local ring is trivial (or free) implies that diagram \ref{lift} in the proof of Theorem \ref{sing-etale-bundle} has a lift. Therefore, we have the following result:
\begin{theorem} \label{general bundle}
		Let $X$ be a smooth variety over $k$ and $\mathcal{C}$ be a smooth algebraic space over $k$. Suppose, $\rho: X \to \mathcal{C}$ is an  in \'etale, Nisnevich or Zariski topology. 
		Then the induced morphism $\rho_*: Sing_*(X) \to Sing_*(\mathcal{C})$ is a Nisnevich local weak equivalence.
	\end{theorem}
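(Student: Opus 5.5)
The plan is to follow the proof of Theorem \ref{sing-etale-bundle} word for word, with the affine line $\A^1_k$ replaced by the fiber $\A^r_k$ of the vector bundle. Fix an essentially smooth Henselian local scheme $\sO$ over $k$. It suffices to show that
$$\rho_*: Sing_*(X)(\sO) \to Sing_*(\sC)(\sO)$$
is a trivial Kan fibration, since then it is a weak equivalence stalkwise. As in the proof of Theorem \ref{sing-etale-bundle}, a lifting problem \eqref{lift} is the same as a square \eqref{liftequivalent}. Its data are a morphism $g:\A^n_\sO \to \sC$ and compatible morphisms $f_j:\A^{n-1}_\sO \to X$, assembled into $f:\partial\Delta^n_\sO \to X$, with $\rho\circ f = g\circ i$.

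First I form the pullback $\pr_1: \A^n_\sO\times_\sC X \to \A^n_\sO$ as in \eqref{pullback}. By \cite[Corollary 4.2.3]{alpher} this is a scheme, and it is a vector bundle of rank $r$, locally trivial in the same topology as $\rho$. The key step is to show this bundle is trivial, i.e. isomorphic to $p_1:\A^n_\sO\times_k\A^r_k \to \A^n_\sO$, and I would do it in three moves.

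\begin{enumerate}
\item Reduce to the Zariski topology. The \'etale, Nisnevich and Zariski $GL_r$-torsors over the affine scheme $\A^n_\sO$ coincide by Hilbert's Theorem 90:
$$H^1_{\text{\'et}}(\A^n_\sO, GL_r)\cong H^1_{Nis}(\A^n_\sO, GL_r)\cong H^1_{Zar}(\A^n_\sO, GL_r).$$
So the bundle is a Zariski vector bundle, i.e. a finitely generated projective module over the polynomial ring $\sO(\sO)[x_1,\dots,x_n]$.
\item Descend to $\sO$. By $\A^1$-invariance of isomorphism classes of vector bundles over affine bases \cite[Theorem 1]{asokI}, the module is extended from $\sO$. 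This is the Lindel/Quillen–Suslin type input; it needs $\sO$ to be a regular local ring essentially of finite type, which holds since $\sO$ is essentially smooth over $k$.
\item Conclude freeness. A projective module over a local ring is free, so the bundle is trivial.
\end{enumerate}

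This gives an isomorphism $\phi:\A^n_\sO\times_k\A^r_k \xrightarrow{\cong} \A^n_\sO\times_\sC X$ over $\A^n_\sO$.

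Next the universal property of the pullback turns $(i,f)$ into $h:\partial\Delta^n_\sO\to \A^n_\sO\times_\sC X$. The composite $p_2\circ\phi^{-1}\circ h:\partial\Delta^n_\sO\to\A^r_k$ is an element of $Sing_*(\A^r_k)(\sO)$ on the boundary. By Example \ref{kan fibrant An}, Lemma \ref{contractible An} and Remark \ref{boundary lifting question}, the map $Sing_*(\A^r_k)(\sO)\to\bullet$ is a trivial fibration. So this boundary datum extends to some $s:\A^n_\sO\to\A^r_k$. Let $\tilde s=(\mathrm{id},s)$ be the corresponding section of $p_1$, and take $\pr_2\circ\phi\circ\tilde s$ as the lift. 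The two triangle identities are checked exactly as in the displayed computation in the proof of Theorem \ref{sing-etale-bundle}: $\rho\circ\pr_2\circ\phi\circ\tilde s=g\circ p_1\circ\tilde s=g$, and on each face $d^i$ the composite returns $f_i$ because $s\circ d^i=p_2\circ\phi^{-1}\circ h\circ d^i$.

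The main obstacle is the triviality step, specifically the second move. One must make sure the homotopy invariance of vector bundles applies over $\A^n_\sO$ with $\sO$ essentially smooth Henselian local rather than a field. The first move also needs care when $\sC$ is only an algebraic space, so that the pulled-back \'etale bundle really is a vector bundle on the scheme $\A^n_\sO$; this holds because vector bundles satisfy \'etale descent. Everything else is formal and identical to the $\A^1$-bundle case.
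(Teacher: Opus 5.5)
Your proposal is correct and is essentially the paper's own argument: you rerun the proof of Theorem \ref{sing-etale-bundle} with $\A^r_k$ in place of $\A^1_k$, and you trivialize the pulled-back bundle on $\A^n_\sO$ using Hilbert 90, homotopy invariance of vector bundles \cite[Theorem 1]{asokI}, and freeness of projective modules over a local ring. The only difference is that you apply Hilbert 90 on $\A^n_\sO$ before descending to $\sO$, rather than on $\sO$ afterwards as the paper does, which is the cleaner order since the homotopy-invariance input concerns Zariski bundles. One small correction: a Henselian $\sO$ is not essentially of finite type over $k$, so the Lindel-type input should be applied at a finite stage of the colimit of \'etale neighbourhoods defining $\sO$ (projective modules and their isomorphisms descend to such a stage), or replaced by Popescu's theorem; the argument is unaffected.
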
 
%

  \section{Simplicial Contractibility of Koras-Russell Threefolds}
  The family of varieties known as Koras-Russell threefolds are the first examples of exotic algebraic structure on the 6-dimentional real manifold $\C^3$. They are diffeomorphic to $\C^3$, but not isomorphic to $\C^3$ as varieties. It is also remarkable that the Makar-Limanov invariant, i.e. ML($X$) is non-trivial, therefore $X \ncong \A^3_\C$ but ML($X\times \A^1_\C)=\C$, for the Koras-Russell threefolds $X$. Though it is not known whether $X \times \A^1_\C \cong \A^4_\C$. If it were true, i.e. $X \times \A^1_\C \cong \A^4_\C$, then $X$ has to be $\A^1$-contractible and $Sing_*(X)$ has to be in fact $\A^1$-fibrant as it is a retract of $Sing_*(\A^4_\C)$ which is $\A^1$-fibrant. Koras-Russell threefolds of the first kind $X$ are $\A^1$-contractible has already been proved in \cite[Theorem 3.1]{df}. Here we prove that $Sing_*(X)$ is $\A^1$-local.
 Before proving Theorem \ref{application purity koras}, we need the following technical lemma (Lemma \ref{key lemma}), which says that in case of open or closed immersions, the functor $Sing_*$ commutes with taking quotient of schemes (as Nisnevich sheaves).
\begin{lemma} \label{key lemma}
    Let $i:V \to U$ be a morphism between smooth varieties over $k$, which is either an open immersion or a closed immersion of schemes. Then the canonical morphism
    $$i_*: Sing_*(U)/Sing_*(V) \to Sing_*(U/V)$$
    is an isomorphism of simplicial sheaves.
\end{lemma}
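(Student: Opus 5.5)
The plan is to compare both sides sectionwise, after sheafification, degree by degree, and to reduce everything to a statement about morphisms $\Delta^n_a\times W \to U$. Here $U/V$ denotes the Nisnevich sheaf pushout $U\sqcup_V \ast$, and $Sing_*(U)/Sing_*(V)$ the sheafified quotient of simplicial sheaves. Two facts about sheafification will be used throughout. Sheafification commutes with colimits, hence with quotients. Taking $n$-simplices commutes with the degreewise construction. So it suffices to show, for each $n\ge 0$, that the map of Nisnevich sheaves
$$\bigl(U(\Delta^n_a\times -)\bigr)/\bigl(V(\Delta^n_a\times -)\bigr)\longrightarrow (U/V)(\Delta^n_a\times -)$$
is an isomorphism. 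One should first note that $W\mapsto U(\Delta^n_a\times W)$ is already a Nisnevich sheaf, since $\Delta^n_a\times -$ preserves Nisnevich covers. The question is then a comparison on stalks, i.e. on essentially smooth Henselian local schemes $\mathcal O$.

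\textbf{Step 1 (describe $(U/V)(\Delta^n_a\times\mathcal O)$).} I would first describe the sections of $U/V$ on a general smooth $Y$. Since $i$ is a monomorphism, the presheaf quotient $U(Y)/V(Y)$ is already separated. A section of the sheafification over $Y$ is therefore a Nisnevich-local family of maps to $U$, glued modulo the relation that on overlaps two local lifts either agree or both factor through $V$. For $Y=\Delta^n_a\times\mathcal O$, I must show that every such section is either the base point or comes from a global morphism $Y\to U$. The key geometric input is that $Y=\mathbb A^n_{\mathcal O}$ is irreducible (I may assume $\mathcal O$ connected), with closed fibre $\mathbb A^n_\kappa$ over the closed point of $\mathcal O$.

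\textbf{Step 2 (the closed immersion case).} Suppose $V\subset U$ is closed. Given a section, let $Y'\subset Y$ be the open subset where the local lifts do not factor through $V$. I want to argue that the locus where a local lift lands in $V$ is closed, namely the preimage of $V$. Hence its complement $Y'$ is open, and on $Y'$ the lifts glue uniquely to a map $Y'\to U\setminus V\subset U$. The expected content is that either $Y'=\emptyset$, giving the base point, or the map extends across $Y\setminus Y'$ by landing in $V$ there. To get the extension I would use the local lifts themselves on a cover. Any two lifts agree on the dense open $Y'$, and $U$ is separated, so the lifts agree everywhere. They therefore glue to a global map $Y\to U$.

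\textbf{Step 3 (the open immersion case) and the main obstacle.} Now suppose $V\subset U$ is open. Here the danger is a section that is nontrivial on part of $Y$ and the base point elsewhere. Locally on a cover $\{Y_\alpha\}$ it is given by maps $Y_\alpha\to U$. On overlaps these either agree or both land in $V$. Such data need not glue a priori, and this is the step I expect to be hardest. The plan is to use the Henselian hypothesis: a Nisnevich cover of $\mathbb A^n_{\mathcal O}$ is not split, but it splits over the closed fibre after pulling back along $\mathcal O$-points. I would try to reduce, via the rigidity of $\mathbb A^n_{\mathcal O}$, to showing that the set of $\alpha$ whose lift does not factor through $V$ determines a clopen condition. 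Irreducibility of $Y$ would then force either all lifts into $V$ (the base point) or all lifts compatible, hence gluing by separatedness of $U$ as in Step 2. If this direct approach fails, my fallback is a Mayer--Vietoris type description. In that case I would identify $U/V$ with the sheaf-theoretic image of $U$, use that for a monomorphism the sheafified quotient is computed from the pair $(U(Y),V(Y))$, and check injectivity and surjectivity on stalks separately.

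Finally, I would verify that the identifications in each degree are compatible with the faces and degeneracies, which are just pullbacks along $\Delta_a$-maps, and conclude that $i_*$ is an isomorphism of simplicial sheaves.
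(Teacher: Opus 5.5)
Your Step 2 is correct and is the right argument for closed immersions. The locus $Y'\subset Y=\mathbb{A}^n_{\mathcal{O}}$ where local lifts leave $V$ is well defined, since on an overlap component the two lifts are either equal or both in $V$, and it is open. When $Y'\neq\emptyset$, every component of (a refinement of) an overlap is \'etale over the integral scheme $Y$, so it meets the preimage of the dense open $Y'$; this rules out ``both in $V$'' there, and the lifts glue. When $Y'=\emptyset$, reducedness gives the base point. The same argument gives injectivity. Your claim that $U(Y)/V(Y)$ is separated is false for disconnected $Y$, but it holds for irreducible $Y$, which is all you need.

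The genuine gap is Step 3, and it cannot be closed, because for open immersions the statement is false. Take $U=\mathbb{A}^2_k$, $V=\mathbb{A}^2_k\setminus\{(0,0)\}$, $\mathcal{O}=\Spec k$, $n=1$, so $Y=\mathbb{A}^1_k=\Spec k[t]$. On the Zariski cover $Y_1=Y\setminus\{1\}$, $Y_2=Y\setminus\{0\}$, put $\phi_1(t)=(t,0)$ and $\phi_2(t)=(t-1,0)$. On $Y_1\cap Y_2$ both maps land in $V$, so they agree in $U(Y_1\cap Y_2)/V(Y_1\cap Y_2)$. They therefore define a section $\sigma\in(U/V)(\mathbb{A}^1_k)=Sing_*(U/V)(\Spec k)_1$.
\begin{itemize}
\item $\sigma$ is not the base point, since $\phi_1(0)=(0,0)\notin V$.
\item $\sigma$ is not the image of any $\psi\colon\mathbb{A}^1_k\to U$. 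Near $0$ the local representatives cannot both lie in $V$, so $\psi=(t,0)$ on a neighbourhood of $0$, hence everywhere. The same reasoning at $1$ forces $\psi=(t-1,0)$, a contradiction.
\end{itemize}
Since Nisnevich covers of $\Spec k$ split, $(Sing_*(U)/Sing_*(V))(\Spec k)_1=U(\mathbb{A}^1_k)/V(\mathbb{A}^1_k)$. Hence $i_*$ is not surjective on $1$-simplices at the point $\Spec k$.

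This also shows why each of your proposed mechanisms fails.
\begin{itemize}
\item The locus where the lifts leave $V$ is $\{0,1\}$, which is closed but not clopen, so no clopen-plus-irreducibility argument is available.
\item The Henselian hypothesis is irrelevant: the obstruction lives on the non-local scheme $\mathbb{A}^n_{\mathcal{O}}$ even when $\mathcal{O}$ is a field.
\item Your fallback presupposes that sections of the sheaf quotient over $Y$ are computed from the pair $(U(Y),V(Y))$, which is exactly what fails.
\end{itemize}

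The paper's proof does not avoid this. It asserts that on a single covering $\tilde W\to W\times_{\mathbb{A}^n_{\mathcal{O}}}W$ the two pullbacks of $\phi$ either both factor through $V$ or coincide. Sheafification only yields this dichotomy on each member of a covering family, and $U(-)/V(-)$ does not send disjoint unions to products. In the example, with $W=Y_1\sqcup Y_2$, the diagonal components of $W\times_Y W$ exclude ``both in $V$'', and the off-diagonal ones exclude ``equal''; so no such $\tilde W$ exists. Consequently only the closed-immersion half of the statement holds, and there your Step~2 supplies the irreducibility argument that the paper's write-up omits. Be aware that the open case is precisely the one invoked in the proof of Theorem~\ref{application purity koras}, for $\mathbb{A}^2_k\setminus\{(0,0)\}\subset\mathbb{A}^2_k$ and $X(s)\setminus L\subset X(s)$.
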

\begin{proof}
There is a commutative diagram of simplicial sheaves
  \[
\xymatrix{
Sing_*(V) \ar[r] \ar[d] & Sing_*(U) \ar[d] \\
\text{*} \ar[r] & Sing_*(U/V)
}
\]
This induces a morphism of simplicial sheaves
$$i_*: Sing_*(U)/Sing_*(V) \to Sing_*(U/V).$$
We will show that for every essentially smooth Henselian local scheme $\mathcal{O}$ over $k$, the morphism
$$i_*: (Sing_*(U)/Sing_*(V))(\mathcal{O}) \to Sing_*(U/V)(\mathcal{O})$$
is an isomorphism of simplicial sets, that is for every $n \geq 0$ the morphism between $n$-simplices
$$i_*: (Sing_*(U)/Sing_*(V))(\mathcal{O})_n \to Sing_*(U/V)(\mathcal{O})_n$$
is a bijection.

Since over the Henselian local scheme, the presheaf section and sheaf section coincide, so
\begin{align*}
(Sing_*(U)/Sing_*(V))(\mathcal{O})_n & \cong Sing_*(U)(\sO)_n/Sing_*(V)(\sO)_n
\end{align*}
For any scheme $T$, the section $Sing_*(U)(T)_n/Sing_*(V)(T)_n$ is the pushout in the category of sets
 \[
\xymatrix{
Hom_{Sch/k}(\A^n_T, V) \ar[r] \ar[d] & Hom_{Sch/k}(\A^n_T, U) \ar[d] \\
\text{*} \ar[r] & Sing_*(U)(T)_n/Sing_*(V)(T)_n
}
\]
where $Sch/k$ is the category of $k$-schemes. So, $Sing_*(U)(T)_n/Sing_*(V)(T)_n$ is a pointed set canonically, denote the base point by $\bullet$. Also for the simplicial presheaf 
$$T \mapsto Sing_*(U)(T)/Sing_*(V)(T),$$
the restriction map
$$Sing_*(U)(T)/Sing_*(V)(T) to Sing_*(U)(T^\prime)/Sing_*(V)(T^\prime)$$
induced by a morphism $T^\prime \to T$ of $k$-schemes, respect the base points.

We will prove that the morphism  $i_*$ is a bijection between the pointed sets
$$i_*: ((Sing_*(U)/Sing_*(V))(\mathcal{O})_n, \bullet) \to (Sing_*(U/V)(\mathcal{O})_n, i_*(\bullet)),$$
for every Henselian local scheme $\sO$.



\textbf{Injectivity:} Suppose, $\phi, \psi \in (Sing_*(U)/Sing_*(V))(\mathcal{O})_n$ and 
$$i_*(\phi) = i_*(\psi) \in Sing_*(U/V)(\mathcal{O}).$$ 
For presheaf and the associated sheaf, the sections over $\mathcal{O}$ are same; thus $\phi, \psi \in (Sing_*(U)/Sing_*(V))^{\text{pre}}(\mathcal{O})$. So the sections are given by the morphisms $\phi, \psi: \mathbb{A}^n_{\mathcal{O}} \to U$ (again, denote by $\phi$ and $\psi$). 
Since $i_*(\phi) = i_*(\psi) \in (U/V)(\mathbb{A}^n_{\mathcal{O}})$, so there is a Nisnevich covering $f: W \to \mathbb{A}^n_{\mathcal{O}}$ such that 
$$\phi \circ f = \psi \circ f \in (U/V)^{\text{pre}}(W) = U(W)/V(W).$$
This means either $\phi \circ f, \psi \circ f: W \to V \hookrightarrow U$ or $\phi \circ f = \psi \circ f: W \to U$.
Since $Hom(-, U)$ is a Nisnevich sheaf, so for the second case we conclude $\phi = \psi: \mathbb{A}^n_{\mathcal{O}} \to U$. For the first case, since $f$ is a covering, in particular surjective; so the images of $\phi$ and $\psi$ are in $V$, thus both the morphisms $\phi$ and $\psi$ factor as
$$\phi: \mathbb{A}^n_{\mathcal{O}} \to V \hookrightarrow U \text{ and } \psi: \mathbb{A}^n_{\mathcal{O}} \to V \hookrightarrow U.$$
Hence 
$$\phi = \psi \in (Sing_*(U)/Sing_*(V))^{\text{pre}}(\mathcal{O}) = U(\mathbb{A}^n_{\mathcal{O}})/V(\mathbb{A}^n_{\mathcal{O}}).$$
So $i_*$ is injective.

\textbf{Surjectivity: } Suppose, $\alpha \in Sing_*(U/V)(\mathcal{O})_n$. So $\alpha \in (U/V)(\mathbb{A}^n_{\mathcal{O}})$. Thus there is a Nisnevich covering $W \to \mathbb{A}^n_{\mathcal{O}}$ such that $\alpha$ is given by an element $\phi \in (U/V)^{\text{pre}}(W)$ (that is, $\phi: W \to U$ is a morphism) such that there is a Nisnevich covering $\Tilde{W} \to W \times_{\mathbb{A}^n_{\mathcal{O}}} W$ such that 
$$\text{pr}_1^*(\phi)|_{\Tilde{W}} = \text{pr}_2^*(\phi)|_{\Tilde{W}} \in (U/V)^{\text{pre}}(\Tilde{W}) = U(\Tilde{W})/V(\Tilde{W}).$$
This means either 
$$\text{pr}_1^*(\phi)|_{\Tilde{W}}, \text{pr}_2^*(\phi)|_{\Tilde{W}} : \Tilde{W} \to V \hookrightarrow U$$
or
$$\text{pr}_1^*(\phi)|_{\Tilde{W}} = \text{pr}_2^*(\phi)|_{\Tilde{W}} : \Tilde{W} \to U.$$
For the second case, since $Hom(-, U)$ is a Nisnevich sheaf and the morphism $\Tilde{W} \to W \times_{\mathbb{A}^n_{\mathcal{O}}} W$ is a Nisnevich covering; so 
$$\text{pr}_1^*(\phi) = \text{pr}_2^*(\phi) : W \times_{\mathbb{A}^n_{\mathcal{O}}} W \to U$$
and again as $W \to \mathbb{A}^n_{\mathcal{O}}$ is a Nisnevich covering; so $\phi$ can be lifted to a morphism $\bar{\phi}: \mathbb{A}^n_{\mathcal{O}} \to U$, which proves surjectivity in this case. \par
For the first case, if $\text{pr}_1^*(\phi)|_{\Tilde{W}}, \text{pr}_2^*(\phi)|_{\Tilde{W}} : \Tilde{W} \to V$, then we claim that the morphism $\phi$ factors through $V$. Indeed, the composition of morphisms
$$\Tilde{W} \to W \times_{\mathbb{A}^n_{\mathcal{O}}} W \xrightarrow{\text{pr}_1} W$$
and 
$$\Tilde{W} \to W \times_{\mathbb{A}^n_{\mathcal{O}}} W \xrightarrow{\text{pr}_2} W$$
are Nisnevich coverings of $W$ and the images of $\text{pr}_1^*(\phi)|_{\Tilde{W}}$ and $\text{pr}_2^*(\phi)|_{\Tilde{W}}$ are in $V$, so the image of $\phi$ is also in $V$. Now we claim that $\alpha = i_*(\bullet) \in (U/V)(\mathbb{A}^n_{\mathcal{O}})$, where $\bullet$ is the distinguished element in $(Sing_*(U)/Sing_*(V))(\mathcal{O})$ mentioned before. This is equivalent to show that 
$$\alpha|_W = i_*(\bullet)|_W \in (U/V)(W),$$
where $W \to \mathbb{A}^n_{\mathcal{O}}$ is the Nisnevich covering and $-|_W$ denotes the restriction maps, induced by $W \to \A^n_\sO$, for both the presheaf $(U/V)^{\text{pre}}$, given by 
$$(U/V)^{\text{pre}}(T) = U(T)/V(T)$$
and the associated Nisnevich sheaf $U/V$.
Now, $\alpha|_W = \eta(\phi)$. Consider the commutative diagram
 \[
\xymatrix{
U(\A^n_\sO)/V(\A^n_\sO) \ar[r]^\eta \ar[d]^{|_W} & (U/V)(\A^n_\sO) \ar[d]^{|_W} \\
U(W)/V(W) \ar[r]_\eta & (U/V)(W)
}
\]
where 
$$\eta: (U/V)^{\text{pre}} \to U/V$$ 
is the sheafification morphism and in the square, the left vertical morphism respects the base point. So,
$$i_*(\bullet)|_W = \eta(\bullet|_W).$$
Since the image of $\phi$ is contained in $V$, so $\phi = \bullet|_W \in U(W)/V(W)$, so 
$$\alpha|_W =\eta(\bullet|_W)= i_*(\bullet)|_W \in (U/V)(W)$$ 
and consequently, $\alpha = i_*(\bullet)$. This proves the surjectivity. 

This completes the proof.
\end{proof}

\begin{theorem} \label{application purity koras}
    Suppose, $X$ is the Koras-Russell threefold of the first kind given by
    $$X \equiv \{x^mz = y^r +t^s+x\} \subset \mathbb{A}^4_k=\Spec\ k[x,y,z,t],$$
    where $m \geq 2$ and $r,s \geq 2$ are coprime integers. Then the structure map $Sing_*(X) \to \Spec\ k$ is a Nisnevich local weak equivalence or $Sing_*(X)$ is simplicially contractible and $Sing_*(X)$ is $\A^1$-local.
\end{theorem}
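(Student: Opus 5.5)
We follow the strategy of Dubouloz–Fasel \cite{df}, replacing each $\A^1$-weak equivalence there by a Nisnevich local weak equivalence after applying $Sing_*$. The key geometric input is the projection $\pr_x: X \to \A^1_k$. Over $\{x \neq 0\}$ we can solve for $z$, so
$$X_x := X \cap \{x\neq 0\} \cong \G_m \times \A^2_k = \Spec k[x^{\pm1},y,t].$$
The fiber over $x=0$ is the closed subvariety
$$Z := \{y^r+t^s=0\} \times \A^1_z.$$
There are two comparison objects.

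\begin{enumerate}
\item For the complement, $Sing_*(X_x) \cong Sing_*(\G_m) \times Sing_*(\A^2_k)$. By Lemma \ref{contractible An} this maps by a sectionwise weak equivalence to $Sing_*(\G_m)$. The latter is a discrete constant simplicial presheaf $\G_m$, since there are no nonconstant maps $\A^n_\sO \to \G_m$ over reduced local $\sO$.
\item For the neighbourhood of the fiber, we use the affine modification picture. Following \cite{df}, $X$ is obtained from $\A^3 = \Spec k[x,y,t]$ by an affine modification along the cuspidal divisor $\{x=0\}$. Equivalently, after removing a suitable closed subset, $X$ admits an $\A^1$-bundle structure (in the $z$ direction) over an open of a quasi-affine threefold or algebraic space.
\end{enumerate}

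Concretely, I would compare $X$ with $\A^3$ through the cofiber sequences $X_x \to X \to X/X_x$ and $\A^3_x \to \A^3 \to \A^3/\A^3_x$. By purity, $X/X_x$ is the Thom space of the normal bundle of $Z$ in $X$. That normal bundle is trivial, so $X/X_x \simeq \P^1 \wedge Z_+$. The contraction $Z \simeq \{y^r+t^s=0\}$ is an $\A^1$-bundle projection, so Theorem \ref{sing-etale-bundle} (or Theorem \ref{general bundle}) gives $Sing_*(Z) \to Sing_*(\{y^r+t^s=0\})$ as a Nisnevich local equivalence.

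The plan is then as follows.

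\begin{enumerate}
\item Apply Lemma \ref{key lemma} to the open immersions $X_x \subset X$ and $\A^3_x \subset \A^3$, and to the relevant closed immersions in the purity deformation. This identifies $Sing_*(X)/Sing_*(X_x)$ with $Sing_*(X/X_x)$, so that $Sing_*$ turns each cofiber sequence into a cofiber sequence of simplicial sheaves.
\item Build the map of cofiber sequences used in \cite{df}. It comes from the morphism $X \to \A^3$, $(x,y,z,t)\mapsto(x,y,t)$, or from the intermediate threefold in \cite{df} that both $X$ and $\A^3$ map to. Its outer terms are Nisnevich local equivalences after $Sing_*$: for the opens by item 1, and for the quotients by Theorem \ref{sing-etale-bundle} combined with the Thom space identification.
\item Apply the weak unstable five lemma, as in \cite{df}, stalkwise at Henselian local $\sO$. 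This shows $Sing_*(X)(\sO)$ is weakly contractible, because $Sing_*(\A^3)(\sO)$ is contractible by Lemma \ref{contractible An}. Hence $Sing_*(X)\to \Spec k$ is a Nisnevich local weak equivalence.
\item Conclude $\A^1$-locality: $Sing_*(X)$ is simplicially equivalent to the point, and the point is $\A^1$-local. $\A^1$-locality is invariant under simplicial weak equivalence, since Definition \ref{a1 local definition} is phrased in $\sH_s(k)$.
\end{enumerate}

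The main obstacle is item 2, specifically the quotient terms. The purity isomorphism $X/X_x \simeq Th(N_Z)$ is only an $\A^1$-equivalence, built through the deformation to the normal cone. To keep everything at the level of Nisnevich local equivalences after $Sing_*$, one must check that each map in the deformation diagram is of a type covered by Lemma \ref{key lemma} and by Theorem \ref{sing-etale-bundle} or Theorem \ref{general bundle}. Concretely, each such map must be an open or closed immersion, or an $\A^1$-/vector bundle. If that fails, I would instead show directly that $Sing_*(X/X_x) \to Sing_*(\A^3/\A^3_x)$ is a stalkwise equivalence. For this I would use the explicit horn-filling formulas of Example \ref{2 horn} and the Appendix, lifting maps $\A^n_\sO \to \A^3$ that meet $\{x=0\}$ to $X$ by solving for $z$ modulo the cusp.

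The weak five lemma step also needs $\pi_1$-compatibility. Here this is harmless because all the relevant stalks are either connected or have trivial $\pi_0$ apart from the base point.
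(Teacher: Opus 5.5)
There is a genuine gap, and it sits exactly at the step you call the main obstacle.

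\textbf{Purity does not apply.} Your closed complement $Z=X\cap\{x=0\}\cong\Spec k[y,z,t]/(y^r+t^s)$ is a cuspidal cylinder. Since $r,s\geq 2$, it is singular along the line $\{x=y=t=0\}$. Homotopy purity (Theorem \ref{homotopy purity}) needs a smooth center, so $X/X_x\simeq\P^1\wedge Z_+$ has no justification. Even for a smooth center, purity is only an $\A^1$-equivalence and says nothing after $Sing_*$.

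What is true is an excision statement. The \'etale map $(x,y,z,t)\mapsto(y,z,t)$, restricted to a neighbourhood of $Z$, gives a Nisnevich isomorphism $X/X_x\cong\A^3/(\A^3\setminus D)$ with $D=\{y^r+t^s=0\}$. The open piece is then a cylinder over the complement of a cuspidal curve in $\A^2$. That space is non-simply-connected (it is fibred over $\G_m$ by $y^r+t^s$ with $\A^1$-rigid fibres), and you have no control over its $Sing_*$.

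\textbf{The reduction is circular.} Because $X_x\to\A^3_x$ is an isomorphism, showing $Sing_*(X/X_x)\to Sing_*(\A^3/\A^3_x)$ is a stalkwise equivalence is essentially the same problem as showing $Sing_*(X)\to Sing_*(\A^3)$ is one. The cofiber sequence buys nothing. Your fallback of lifting maps $\A^n_{\sO}\to\A^3$ to $X$ by solving for $z$ cannot work either, because $X\to\A^3$ misses $\{x=0,\ y^r+t^s\neq 0\}$.

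\textbf{The five-lemma remark is false.} $Sing_*(X_x)(\sO)\simeq Sing_*(\G_m)(\sO)$ is the discrete set $\sO^\times$. So the left-hand terms are far from connected, and the connectivity hypotheses needed in an unstable five lemma are not available.

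\textbf{The missing idea.} Remove the smooth codimension-two line $L=X\cap\{x=y=0\}$, which is precisely the singular locus of your $Z$, rather than the divisor $\{x=0\}$. Then compare with $\A^2\setminus\{(0,0)\}\subset\A^2$ through the closed immersion $(y,t)\mapsto(-y^r-t^s,y,0,t)$.
\begin{itemize}
\item \emph{Open terms.} By \cite{df}, $X(s)\setminus L$ is $\A^1$-equivalent to $\A^2\setminus\{(0,0)\}$. This upgrades to a Nisnevich local equivalence because both sides have $\A^1$-local $Sing_*$. One applies Theorem \ref{sing-etale-bundle} to the $\A^1$-bundle $SL_2\to\A^2\setminus\{(0,0)\}$. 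One also applies it to the $\A^1$-bundles from the fourfold $W$ of \cite{df} over $X(s)\setminus L$ and $X(1)\setminus L$, and to $X(1)\setminus L\to\A^2\setminus\{(0,0)\}$.
\item \emph{Quotient.} Via the \'etale map $\rho|_V$ and a Nisnevich distinguished square, $X(s)/(X(s)\setminus L)$ is identified on the nose with $\A^3/(\A^3\setminus\{y=t=0\})$. This is then compared with $\A^2/(\A^2\setminus\{(0,0)\})$ using the line bundle $\A^3\to\A^2$, \cite[Lemma 2.11]{mv} and Lemma \ref{key lemma}.
\end{itemize}
Only excision isomorphisms and bundle projections occur, so everything survives $Sing_*$, and no purity or deformation to the normal cone is needed.
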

\begin{proof}
    We will follow the proof in \cite[Section 3.1]{df}. 
Let us write, 
$$X(s) \equiv \{x^mz = y^r +t^s+x\}; \ s \geq 1.$$
The morphism
$$\phi: X(1) \equiv \{x^mz = y^r+t+x\} \to \mathbb{A}^3_k \text{ defined as }(x,y,z,t) \mapsto (x, y ,z)$$
is an isomorphism.
Consider the closed immersion 
$$i: \mathbb{A}^2_k \equiv \Spec\ k[y,t] \to X(s) \text{ defined as } (y,t) \mapsto (-y^r-t^s, y,0, t).$$
Let $L$ be the line contained in $X(s)$:
$$L:=X(s) \cap \{x=y=0\}.$$
Then the closed immersion $i$ restricts to the morphism (we denote the restriction also by $i$)
$$i: \mathbb{A}^2_k \setminus \{(0,0)\} \to X(s) \setminus L.$$
Consider the following commutative diagram where each row is a $\A^1$-cofiber sequence
  \[
\xymatrix{
\mathbb{A}^2_k \setminus \{(0,0)\}  \ar[r] \ar[d]^i & \mathbb{A}^2_k \ar[d]^{i} \ar[r] & \mathbb{A}^2_k/(\mathbb{A}^2_k \setminus \{(0,0)\})  \ar[d]^q \\
X(s) \setminus L \ar[r] & X(s)  \ar[r] & X(s)/(X(s) \setminus L)  
}
\]
here $q$ is the morphism, induced by $i$. Now, applying the $Sing_*$ functor and by Lemma \ref{key lemma}, both rows of the following commutative diagram are also cofiber sequences:
\[
\xymatrix{
	Sing_*(\mathbb{A}^2_k \setminus \{(0,0)\})  \ar[r] \ar[d]^{i_*} & Sing_*(\mathbb{A}^2_k) \ar[d]^{i_*} \ar[r] & Sing_*(\mathbb{A}^2_k/(\mathbb{A}^2_k \setminus \{(0,0)\}))  \ar[d]^{q_*} \\
	Sing_*(X(s) \setminus L) \ar[r] & Sing_*(X(s))  \ar[r] & Sing_*(X(s)/(X(s) \setminus L))
}
\]
Thus to prove that $i_*: Sing_*(\A^2_k) \to Sing_*(X(s))$ is a simplicial weak equivalence, by very weak five lemma \cite[Lemma 2.1]{df} it suffices to prove that 
$$i_*: Sing_*(\A^2_k\backslash\{(0,0)\}) \to Sing_*(X(s)\backslash L) \text{ and } $$
$$q_*: Sing_*(\mathbb{A}^2_k/(\mathbb{A}^2_k \setminus \{(0,0)\}) \to  Sing_*(X(s)/(X(s) \setminus L))$$
are simplicial weak equivalences.
In \cite[Section 3]{df}, it was shown that the left vertical map 
$$i: \mathbb{A}^2_k \setminus \{(0,0)\} \to X(s) \setminus L$$ 
is an $\mathbb{A}^1$-weak equivalence. So to prove the map 
$$i_*: Sing_*(\A^2_k\backslash\{(0,0)\}) \to Sing_*(X(s)\backslash L)$$
is a simplicial weak equivalence, it suffices to show that
both the spaces $Sing_*(\A^2_k\backslash\{(0,0)\}) $ and  $Sing_*(X(s)\backslash L)$ are $\A^1$-local. 

The projection, 
$$SL_2 \equiv \{xw-yz=1 \subset \mathbb{A}^4_k\} \to \mathbb{A}^2_k \setminus \{0,0\} \text{ defined as } (x,y,z,w) \mapsto (x,y)$$
is an line bundle. Therefore, the induced map 
$$Sing_*(SL_2) \to Sing_*(\mathbb{A}^2_k \setminus \{0,0\})$$
is a simplicial weak equivalence (Proposition \ref{sing-etale-bundle}). Since, $Sing_*(SL_2)$ is $\mathbb{A}^1$-local \cite[Remark 3.3.8]{asokII}, so $Sing_*(\mathbb{A}^2_k \setminus \{0,0\})$ is also $\mathbb{A}^1$-local. 
By \cite[Proposition 3.1]{df}, there is a fourfold $W \in Sm/k$ such that $W$ admits Zariski locally trivial $\mathbb{A}^1$-bundles 
$$p_s: W \to X(s) \setminus L \text{ and } p_1: W \to X(1)\setminus L.$$
Therefore, the induced maps
$$(p_s)_*: Sing_*(W) \to Sing_*(X(s) \setminus L) \text{ and } (p_1)_*: Sing_*(W) \to Sing_*(X(1) \setminus L)$$
are Nisnevich local weak equivalences (Proposition \ref{sing-etale-bundle}).
 The projection map
$$p: X(1)\setminus L \to \mathbb{A}^2_k \setminus \{0,0\} \mathrm{\ defined\ as } (x,y,z,t) \mapsto (x,t) $$
is an line bundle. So again by Proposition \ref{sing-etale-bundle}, the induced map
$$p_*: Sing_*(X(1) \setminus L) \to Sing_*(\mathbb{A}^2_k \setminus \{0,0\})$$ 
is a Nisnevich local weak equivalence. Thus, $Sing_*(X(1) \setminus L)$ is $\A^1$-local and consequently \\
$Sing_*(X(s)\backslash L)$ is also $\mathbb{A}^1$-local. Thus the $\A^1$-weak equivalence $$i_*: Sing_*(\mathbb{A}^2_k \setminus \{0,0\}) \to Sing_*(X(s) \setminus L)$$ is a Nisnevich local weak equivalence.\\
Now we will prove that $q_*$ is a Nisnevich local weak equivalence.

Consider the projection
$$\rho: X(s) \equiv \{x^mz = y^r+t^s+x\} \to \mathbb{A}^3_k \equiv \Spec\ k[y,z,t]$$
$$ \text{ given by } (x,y,z,t) \mapsto (y,z,t).$$
Suppose, $f(x,y,z,t) = x^mz - y^r-t^s-x \in k[x,y,z,t]$ that defines $X(s)$. Firstly, we will find some open subset of $X(s)$, where $\rho$ is \'etale.
$$\nabla f = (mx^{m-1}z-1, -ry^{r-1}, x^m, -st^{s-1}).$$
Suppose, $\Tilde{x} = (x_0,y_0,z_0,t_0) \in X(s)$ is a $k$-point. So the tangent space $T_{\Tilde{x}}(X(s))$ is given by (as a vector subspace of $k^4$):
$$(mx_0^{m-1}z_0-1)x -ry_0^{r-1}y+ x_0^mz -st_0^{s-1}t = 0.$$
The induced map between the tangent spaces
$$\rho_*: T_{\Tilde{x}}(X(s)) \to k^3 \text{ is given by } (x, y,z,t) \mapsto (y,z,t).$$
The map $\rho_*$ is an isomorphism on the open subset $X(s) \backslash \{x^{m-1}z = \frac{1}{m}\}$ of $X(s)$. 

Thus $\rho|_{X(s) \backslash \{x^{m-1}z = \frac{1}{m}\}}: X(s) \backslash \{x^{m-1}z = \frac{1}{m}\} \to \mathbb{A}^3_k$ is \'etale.

Suppose, $V = X(s) \backslash (\{x^{m-1}z = \frac{1}{m}\} \cup \{x^{m-1}z = 1\})$. Then $V$ is an open subset of $X(s)$ and $L := X(s) \cap \{x = y=0\} \subset V$ and also $\rho|_V : V \to \mathbb{A}^3_k$ is an \'etale morphism. The inverse image $\rho|_V^{-1}(\{y=t=0\}) = L$. Let us denote the closed subscheme $\{y=t=0\}\subset \A^3$ also by $L$.
Consider the following elementary distinguished squares in the Nisnevich topology (in fact the left square is in the Zariski topology, all morphisms are open immersions):
 \[
\xymatrix{
V \setminus L \ar[r] \ar[d]^j & V \ar[d]^j & V \setminus L \ar[r] \ar[d]^{\rho|_V} & V \ar[d]^{\rho|_V} \\
X(s) \setminus L \ar[r] & X(s) & \A^3_k \backslash L \ar[r] & \A^3_k
}
\]
They induce the following isomorphisms of the Nisnevich sheaves
$$j: (V/(V \setminus L)) \cong (X(s)/(X(s) \setminus L)) \mathrm{\ and \ } (\rho|V)_*: (V/(V \setminus L)) \cong (\A^3_k/(\A^3_k\backslash L)).$$
Therefore, after applying the $Sing_*$ functor, we have the isomorphisms.
$$j_*: Sing_*(V/(V \setminus L)) \to Sing_*(X(s)/(X(s) \setminus L)) \mathrm{\ and \ }$$ $$(\rho|_V)_*: Sing_*(V/(V \setminus L)) \to Sing_*(\mathbb{A}^3_k/(\mathbb{A}^3_k \setminus L)).$$ 
Thus, the projection map $\rho: X(s) \to \mathbb{A}^3_k$ induces Nisnevich local weak equivalence 
$$\rho_*: Sing_*(X(s)/(X(s) \setminus L)) \to Sing_*(\mathbb{A}^3_k/(\mathbb{A}^3_k \setminus L)).$$
Therefore, to show that 
$$q_*: Sing_*(\mathbb{A}^2_k/(\mathbb{A}^2_k \setminus \{(0,0)\})) \to  Sing_*(X(s)/(X(s) \setminus L))$$ 
is a Nisnevich local weak equivalence, it is enough to show that the morphism
$$\rho \circ i: \mathbb{A}^2_k \to \mathbb{A}^3_k \text{ given by }(y,t) \mapsto (y,0,t)$$
induces Nisnevich local weak equivalence
$$Sing_*(\mathbb{A}^2_k/(\mathbb{A}^2_k \setminus \{(0,0)\})) \to Sing_*(\mathbb{A}^3_k/(\mathbb{A}^3_k \setminus L)).$$
Consider, the projection
$$\text{pr}: \mathbb{A}^3_k \to \mathbb{A}^2_k \text{ defined as }(y,z,t) \mapsto (y,t).$$
The morphisms $\text{pr}$ and $\text{pr}|_{\mathbb{A}^3_k \setminus L}: \mathbb{A}^3_k \setminus L \to \mathbb{A}^2_k \setminus \{(0,0)\}$ are line bundles. Therefore, by Proposition \ref{sing-etale-bundle}, the induced morphisms
$$\text{pr}_*: Sing_*(\A^3_k) \to Sing_*(\A^2_k) \text{ and }$$ 
$$(\text{pr}|_{\mathbb{A}^3_k \setminus L})_*: Sing_*(\mathbb{A}^3_k \setminus L) \to Sing_*(\mathbb{A}^2_k \setminus \{(0,0)\})$$
are Nisnevich local weak equivalences. The morphism $\rho \circ i$ is the zero section of $\text{pr}$, so the morphisms
$$Sing_*(\A^2_k) \to Sing_*(\A^3_k) \text{ and }Sing_*(\A^2_k \setminus \{(0,0)\}) \to Sing_*(\A^3_k \setminus L),$$
induced by $\rho \circ i$ are Nisnevich local weak equivalences. Therefore, by \cite[Lemma 2.11, Section 2.2]{mv}, the morphism 
$$Sing_*(\A^2_k)/(Sing_*(\A^2_k \setminus \{(0,0)\})) \to Sing_*(\A^3_k)/(Sing_*(\A^3_k \setminus L))$$
is a Nisnevich local weak equivalence. Finally using Lemma \ref{key lemma}, the morphism
$$Sing_*(\mathbb{A}^2_k/(\mathbb{A}^2_k \setminus \{(0,0)\})) \to Sing_*(\mathbb{A}^3_k/(\mathbb{A}^3_k \setminus L)),$$
induced by $\rho \circ i$, is a Nisnevich local weak equivalence. This completes the proof. Therefore, we conclude that $i_*: Sing_*(\A^2_k) \to Sing_*(X(s))$ is a simplicial weak equivalence, so $Sing_*(X(s))$ is simplicially contractible. Thus $Sing_*(X)$ is $\A^1$-local.

\end{proof}
Thus by using \cite[Lemma 4.1]{df}, we have the following corollary
\begin{corollary}
    Suppose, $X$ is the threefold contained in $\mathbb{A}^4_k = \Spec\ k[x,y,z,t]$ given by
    $$X \equiv x^mz = y^r+t^s+xq(x),$$
    where $m,r,s$ are integers, $m \geq 2$ and $r,s \geq 1$ are coprime and $q(x) \in k[x]$ is such that $q(0) \in k^*$. Then $Sing_*(X)$ is simplicially contractible.
\end{corollary}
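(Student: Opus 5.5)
The plan is to reduce to Theorem \ref{application purity koras} through the same cofiber-sequence argument, now for the more general family. Write
$$X_q(s) \equiv \{x^mz = y^r+t^s+xq(x)\}.$$
The result cited after the theorem, \cite[Lemma 4.1]{df}, is what Dubouloz--Fasel use to extend $\A^1$-contractibility from the case $q\equiv 1$ to general $q$ with $q(0)\in k^*$. I would mirror their argument step by step, replacing each $\A^1$-weak equivalence they use by a Nisnevich local weak equivalence of $Sing_*$. Each such replacement should come from Theorem \ref{sing-etale-bundle} or Theorem \ref{general bundle} applied to affine-space bundles, together with Lemma \ref{key lemma} for quotients.

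The steps, in order, are as follows.

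\begin{enumerate}
\item Use the same closed immersion $i:\A^2_k\to X$, given by $(y,t)\mapsto(\text{root},y,0,t)$. The first coordinate is now forced by $xq(x) = -y^r-t^s$. Since $q(0)\neq 0$, near $x=0$ one has $x = u(\cdot)\,(-y^r-t^s)$ for a unit $u$. So globally I would instead use the line $L=X\cap\{x=y=0\}$ and work on the open subset $V\supset L$ on which $q(x)\neq 0$ and the projection $(x,y,z,t)\mapsto(y,z,t)$ is étale.
\item Form the diagram of cofiber sequences
$$X\setminus L \to X \to X/(X\setminus L),$$
apply $Sing_*$, and use Lemma \ref{key lemma}.
\item For the quotient term, reproduce the Nisnevich excision argument of the theorem: elementary distinguished squares give isomorphisms of sheaves
$$V/(V\setminus L)\cong X/(X\setminus L)\cong \A^3_k/(\A^3_k\setminus L),$$
and the latter identifies with $Sing_*$ of $\A^2_k/(\A^2_k\setminus 0)$ via the line-bundle projection $\mathrm{pr}$ and \cite[Lemma 2.11]{mv}.
\item For the open part, I expect \cite[Lemma 4.1]{df} (or the construction behind it) to supply a smooth $W$ with Zariski locally trivial $\A^1$-bundles $W\to X\setminus L$ and $W\to X(1)\setminus L$, exactly as \cite[Proposition 3.1]{df} did before. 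Theorem \ref{sing-etale-bundle} then makes $Sing_*(X\setminus L)$ Nisnevich locally equivalent to $Sing_*(X(1)\setminus L)$, hence to $Sing_*(\A^2_k\setminus\{0\})$, which is $\A^1$-local by the $SL_2$ argument.
\item The very weak five lemma \cite[Lemma 2.1]{df} then gives that $Sing_*(\A^2_k)\to Sing_*(X)$ is a simplicial weak equivalence. Lemma \ref{contractible An} finishes the proof.
\end{enumerate}

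The main obstacle is step 4. I need the geometric input of \cite[Lemma 4.1]{df} to be in the form of honest (Zariski or étale) affine-line or vector bundles, not merely $\A^1$-weak equivalences, because only bundles feed into Theorem \ref{sing-etale-bundle}. I would try first to check that their proof produces $X\setminus L$ as an $\A^1$-bundle over (an open subset of) an explicit surface complement. Failing that, I would look for a common $\A^1$-bundle cover $W$ of $X\setminus L$ and $X(1)\setminus L$ obtained by adjoining a variable that solves for $z$ after substituting $x\mapsto xq(x)$. A secondary concern, which I regard as minor, is the case $r$ or $s$ equal to $1$. There $X$ is isomorphic to $\A^3_k$ by solving for $y$ or $t$ linearly after a change of variables, so contractibility follows directly from Lemma \ref{contractible An}.
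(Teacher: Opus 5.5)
There is a genuine gap in Step 5. The very weak five lemma \cite[Lemma 2.1]{df} needs a \emph{morphism} of cofiber sequences, from $Sing_*(\A^2_k\setminus\{0\})\to Sing_*(\A^2_k)\to Sing_*(\A^2_k/(\A^2_k\setminus\{0\}))$ to the corresponding sequence for $(X,L)$. You never construct the vertical map $\A^2_k\to X$.

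The formula in Step 1 is not a morphism once $q$ is genuinely non-constant. A map $(y,t)\mapsto(x(y,t),y,z(y,t),t)$ into $X$ requires $x\,(x^{m-1}z-q(x))=f$, where $f=y^r+t^s$ is irreducible in $k[y,t]$. So there are only two options:
\begin{enumerate}
\item $x$ is a nonzero constant, and then the image misses $L$.
\item $x=cf$, and then $c^mf^{m-1}z=1+c\,q(cf)$, which forces $q\equiv q(0)\bmod x^{m-1}$.
\end{enumerate}
In the second case, absorbing the $x^m$-part of $xq(x)$ into $z$ and rescaling $x$ identifies $X$ with the standard Koras--Russell threefold. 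Theorem \ref{application purity koras} already covers that case. So in every genuinely new case (which needs $m\geq 3$) no such section exists.

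Falling back on ``$L$ and $V$'' produces no map at all. Knowing only that the outer terms of $Sing_*(X\setminus L)\to Sing_*(X)\to Sing_*(X/(X\setminus L))$ are abstractly equivalent to those of the model sequence says nothing about the middle term, which depends on the gluing. So Steps 3 and 4, even if granted, do not give contractibility of $Sing_*(X)$.

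The paper does not re-run the purity/excision/five-lemma argument for this family. It deduces the corollary from Theorem \ref{application purity koras} by invoking \cite[Lemma 4.1]{df}, which reduces general $q$ to the case already treated. To close your gap you need one of two things:
\begin{enumerate}
\item Such a reduction in a form that $Sing_*$ respects, so that Theorem \ref{sing-etale-bundle} or Theorem \ref{general bundle} applies. Examples are an isomorphism, an isomorphism of cylinders, or a zigzag of Zariski or \'etale $\A^1$-bundles or vector bundles.
\item An explicit closed immersion $\A^2_k\hookrightarrow X$, not of the shape you wrote, meeting $L$ in a single point. You would then have to check directly that the induced maps on the open parts and on the quotients are equivalences.
\end{enumerate}

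Your Step 4 likewise rests on a guess about the content of \cite[Lemma 4.1]{df} rather than an argument. A smaller point: your open set $V$ must also remove $\{x^{m-1}z=q(x)\}$, the other component of $\rho^{-1}(\{y=t=0\})$. Imposing $q(x)\neq 0$ does not remove it, and without this the excision square is not elementary distinguished. Your treatment of $r=1$ or $s=1$, where $X\cong\A^3_k$, is fine.
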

	





\begin{remark}
Thus Theorem \ref{application purity koras} says that if $X$ is a Koras-Russell threefold of the first kind, then $Sing_*(X)(\mathcal{O})$ is a contractible  simplicial set, for any Henselian local scheme $\mathcal{O}$.
This in particular, implies that $Sing_*(X)$
is $\mathbb{A}^1$-local.
If the Koras-Russell threefold of the first kind were a counterexample to the Zariski cancellation in characteristics zero, i.e. $X \times_k \mathbb{A}^1_\C \cong \mathbb{A}^4_\C$, then for any $U \in Sm/k$, $Sing_*(X)(U)$ has to be a Kan fibrant and contractible simplicial set, as it is a retract of $Sing_*(\A^4_\C)(U)$ which is Kan fibrant and contractible (Example \ref{kan fibrant An} and Lemma \ref{contractible An}). So a natural question is whether $X$ is $\mathbb{A}^1$-naive in the sense of \cite[Definition 2.1.1]{asokII} or equivalently, $Sing_*(X)$ satisfies the affine Nisnevich excision \cite[Section 2.1]{asokI}.
\end{remark}


	\section{$\A^1$-invariance of $\pi_0^{\A^1}$ of smooth affine surfaces} \label{a1 invariance section}
In this section we will prove that for any smooth affine complex surface $X$, the $\mathbb{A}^1$-connected component sheaf $\pi_0^{\mathbb{A}^1}(X)$ is $\mathbb{A}^1$-invariant. For this let us recall the definition of $\mathbb{A}^1$-connected component sheaf first. 
\begin{definition} \cite[Section 3.2]{mv}
Suppose, $\sX$ is a space over $k$. The $\A^1$-connected component of $\sX$, denoted by $\pi_0^{\A^1}(\sX)$, is a Nisnevich sheaf of sets on $Sm/k$ associated to the presheaf 
$$U \in Sm/k \mapsto Hom_{\sH(k)}(U, \sX).$$
The space $\sX$ is called $\A^1$-connected, if $\pi_0^{\A^1}(\sX)$ is isomorphic to $\Spec\ k$. 
\end{definition}
\begin{remark}
\begin{enumerate}

\item  For a space $\sX$ the canonical morphism $\sX \to \pi_0^{\A^1}(\sX)$ is an epimorphism \cite[Corollary 2.1.5]{am}.
\item The space $\sX$ is $\A^1$-connected, if and only if $\pi_0^{\A^1}(\sX)(\Spec\ F)$ is trivial, for every finitely generated, separable field extension $F$ over $k$ \cite[Lemma 3.3.6]{ictp}.
\end{enumerate}
\end{remark}
\begin{definition}($\A^1$-invariant, see also Definition \ref{a1 local definition}) \label{invariant definition}  \hfill
	\begin{enumerate}
		\item	A presheaf of sets $\sF$ is called an $\A^1$-invariant if for any $U\in Sm/k$ the map
		$$\sF(U)\to \sF(U\times\A^1_k)$$ 
		induced by the projection $U\times\A^1_k \to U$ is a bijection of sets \cite[Definition 1.7]{mor}.
		\item A scheme $X$ over $k$ is called an $\A^1$-rigid, if $X$ is $\A^1$-invariant, as representable sheaf \cite[Example 2.4]{mv}. 
	\end{enumerate}
\end{definition}
\begin{remark} \cite[Lemma 2.16]{mazza} \label{mazza}
A presheaf of sets $\sF$  on $Sm/k$ is $\A^1$-invariant if and only if for every $U \in Sm/k$ the morphisms 
$$i_0^*, i_1^*: \sF(\A^1_k \times_k U) \to \sF(U),$$
induced by the $0$-section and $1$-section respectively, are equal.
\end{remark}
For a space $\sX$, the presheaf of sets $U \in Sm/k \mapsto Hom_{\sH(k)}(U, \sX)$ is $\A^1$-invariant, by the construction of $\sH(k)$. Morel conjectured that the sheaf $\pi_0^{\A^1}(\sX)$ is also $\A^1$-invariant \cite[Conjecture 1.12]{mor}. The Conjecture is false, in general. Ayoub constructed a space $\sX$ for which $\pi_0^{\A^1}(\sX)$ is not $\A^1$-invariant and this $\sX$ is not a representable sheaf \cite[Construction 8]{ayoub}.
However, $\pi_0^{\A^1}(\sX)$ is known to be $\A^1$-invariant for the following spaces $\sX$:
\begin{enumerate}
\item The space $\sX$ is $\A^1$-connected. 
\item The space $\sX$ is itself an $\A^1$-invariant Nisnevich sheaf or $\sX$ is an $\A^1$-rigid scheme (for example, abelian varieties, proper open subscheme of $\A^1_k$, a smooth projective curve of genus $g>0$) \cite[Example 2.1.10]{am}.
\item The space $\sX$ is a motivic $H$-group or $\sX$ is a homogeneous space for a motivic $H$-group \cite[Theorem 4.18]{u}.
\item The space $\sX$ is a smooth toric variety \cite[Lemma 4.2, Lemma 4.4]{w}.
\item The space $\sX$ is a smooth projective surface (over any field, if $\sX$ is a non-uniruled surface \cite[Corollary 3.15]{bhs} and over an algebraically closed field of characteristic zero, if $\sX$ is a ruled surface \cite[Theorem 1.2]{bs}).
\end{enumerate}

In this section, we will prove that if $X$ is an affine surface over an algebraically closed field of characteristic zero, then $\pi_0^{\A^1}(X)$ is an $\A^1$-invariant sheaf (Theorem \ref{connected-base-new}).

Let us first recall the construction and properties of the universal $\A^1$-invariant sheaf $\sL(\sF)$ from \cite{bhs}, associated to a Nisnevich sheaf of sets $\sF$ on $Sm/k$.
\begin{definition} \cite[Definition 2.9]{bhs}
Given a Nisnevich sheaf of sets $\sF$ on $Sm/k$, the sheaf $\sS(\sF)$ is defined to be the Nisnevich sheaf of sets associated to the presheaf $\sS^{\text{pre}}(\sF)$, which is defined as
$$\sS^{\text{pre}}(\sF)(U) = \sF(U)/\sim, \text{ for }U \in Sm/k,$$
where $\sF(U)/\sim$ the quotient corresponding to the equivalence relation ``$\sim$" on $\sF(U)$, generated by the naive $\A^1$-homotopies; briefly for $\alpha, \beta \in \sF(U)$, $\alpha \sim \beta$ if there are naive $\A^1$-homotopies $H_1, \dots, H_m \in \sF(U)$, satisfying $H_j\circ i_1 = H_{j+1}\circ i_0$, for every $j$, such that $H_1 \circ i_0 = \alpha$ and $H_m \circ i_1 = \beta$ (here $i_0, i_1: U \to \A^1_U$ are the $0$-section and the $1$-section respectively).

For an integer $n > 1$, the sheaf $\sS^n(\sF)$ is defined inductively
$$\sS^n(\sF) := \sS(\sS^{n-1}(\sF))$$
and $\sS^0(\sF) = \sF$. The quotient map $\sF(U) \to \sF(U)/\sim$ induces the canonical morphism
$$\eta: \sF \to \sS(\sF)$$
which is an epimorphism of Nisnevich sheaves. For every $n$, this $n$-th iteration $\sS^n$ defines a functor on the category of Nisnevich sheaves on $Sm/k$ and the canonical epimorphism $\sS^n(\sF) \to \sS^{n+1}(\sF)$ defines the natural transformation $\sS^n \to \sS^{n+1}$. The sheaf $\sL(\sF)$ is defined to be the filtered colimit with respect to the morphisms $\sS^n(\sF) \to \sS^{n+1}(\sF)$:
$$\mathcal{L}(\mathcal{F}) := \varinjlim_{n \geq 0}\mathcal{S}^n(\mathcal{F}).$$
There is an induced morphism $\sF \to \sL(\sF)$, which is an epimorphism.
\end{definition}
\begin{remark}
\begin{enumerate}
\item For $X \in Sm/k$, the sheaf $\sS(X)$ is the $\A^1$-chain connected component sheaf $\pi_0^{ch}(X)$, defined by Asok-Morel \cite[Definition 2.2.4]{am}; it is the Nisnevich sheaf associated to the presheaf $U \mapsto \pi_0(Sing_*(X)(U))$ \cite[Remark 2.10]{bhs}.
\item For a sheaf $\sF$, the canonical epimorphism $\sF \to \pi_0^{\A^1}(\sF)$ factors through the epimorphism $\sS(\sF) \to \pi_0^{\A^1}(\sF)$.
\item The sheaf $\sL(\sF)$ is $\A^1$-invariant \cite[Theorem 2.13]{bhs}.
\item For a sheaf $\sF$, both the sheaves $\pi_0^{\A^1}(\sF)$ and $\sL(\sF)$ satisfy the same universal property \cite[Remark 2.15]{bhs}: given a morphism $\sF \to \sG$ to an $\A^1$-invariant sheaf $\sG$, it factors uniquely through the canonical morphisms $\sF \to \pi_0^{\A^1}(\sF)$ and $\sF \to \sL(\sF)$. Since $\sL(\sF)$ is $\A^1$-invariant, this induces the morphism $\pi_0^{\A^1}(\sF) \to \sL(\sF)$, which is an epimorphism and  this morphism is an isomorphism if and only if $\pi_0^{\A^1}(\sF)$ is $\A^1$-invariant \cite[Corollary 2.18]{bhs}.
\item The canonical morphism $\pi_0^{\A^1}(\sF) \to \sL(\sF)$ induces the bijection over the sections $\Spec\ K$, for every finitely generated separable field extension $K/k$ (\cite[Corollary 2.2]{brs}, \cite[Corollary 2.15]{cb}).
\item If $X \in Sm/k$ is a proper, non-uniruled surface, then $\sS(X) \cong \sS^2(X)$ \cite[Theorem 3.14]{bhs}. We will prove (Theorem \ref{non-uniruled}) that if $X \in Sm/k$ is an affine, non $\A^1$-uniruled surface (Definition \ref{ruled-defn}), then also 
$$\sS(X) \cong \sS^2(X).$$
\end{enumerate}
\end{remark}
Let us recall the following two classes of affine varieties with dominant family of affine lines. Suppose, $X \in Sm/k$ is an affine variety, where $k$ is an algebraically closed field field of characteristic $0$.
\begin{definition}\label{ruled-defn}
\begin{enumerate}
\item $X$ is said to be \textbf{$\mathbb{A}^1$-uniruled or log-uniruled} if there is a dominant generically finite morphism $H: \mathbb{A}^1_k \times_k Y \to X$ for some $k$-variety $Y$. 
\item $X$ is said to be \textbf{$\mathbb{A}^1$-ruled} or affine-ruled if there is a Zariski open dense subset $U$ of $X$ such that $U$ is isomorphic to $\mathbb{A}^1_k \times_k Z$ for some $k$-variety $Z$ \cite[Section 2.1]{miyanishicrm}.
\end{enumerate}
\end{definition}
In case of smooth affine surfaces, we have the following equivalences
(\cite[Chapter 2, Theorem 2.1.1 and Chapter 3, Lemma 1.3.1]{miyanishicrm} and \cite[Lemma 1.8]{russell}):
\begin{theorem} \label{all uniruled equivalences}
Let $X$ be a smooth affine surface over an algebraically closed field $k$ of characteristic zero.
Then 
the following are equivalent:
\begin{enumerate}
	\item $X$ is $\mathbb{A}^1$-uniruled.
	\item $X$ has negative logarithmic Kodaira dimension.
	\item $X$ is $\mathbb{A}^1$-ruled.
	\item $X$ admits an $\mathbb{A}^1$-fibration $\pi: X \to C$.
\end{enumerate}
\end{theorem}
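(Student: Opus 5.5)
The theorem collects standard equivalences from the literature. I would prove it as a cycle of implications, (3) $\Rightarrow$ (1) $\Rightarrow$ (2) $\Rightarrow$ (4) $\Rightarrow$ (3), using the cited results for the deep steps.

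\textbf{The two formal steps.}
(3) $\Rightarrow$ (1): if $U\cong \mathbb{A}^1_k\times_k Z$ is open dense in $X$, the composite $\mathbb{A}^1_k\times_k Z\cong U\hookrightarrow X$ is dominant and birational onto its image. In particular it is generically finite, so it is an $\mathbb{A}^1$-uniruling with $Y=Z$.
(4) $\Rightarrow$ (3): let $\pi:X\to C$ be an $\mathbb{A}^1$-fibration. By Remark \ref{properties a1 fibration}(4) there is a nonempty open $V\subset C$ with $\pi^{-1}(V)\cong \mathbb{A}^1_k\times_k V$. This gives the required open dense cylinder, with $Z=V$.

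\textbf{(1) $\Rightarrow$ (2).} Take a dominant generically finite $H:\mathbb{A}^1_k\times_k Y\to X$. After shrinking $Y$, resolving, and passing to a smooth open piece, we may assume $Y$ is smooth and $H$ is dominant and étale onto its image on a dense open set. Logarithmic Kodaira dimension does not increase under dominant generically finite (log) morphisms: by Iitaka's logarithmic ramification formula, $\bar\kappa(\mathbb{A}^1\times Y)\ge\bar\kappa(X)$. By the additivity/subadditivity property for products, $\bar\kappa(\mathbb{A}^1\times Y)=-\infty$ because $\bar\kappa(\mathbb{A}^1)=-\infty$. Hence $\bar\kappa(X)=-\infty$.

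\textbf{(2) $\Rightarrow$ (4): the main obstacle.} This is the genuinely deep step. It is the theorem of Miyanishi--Sugie and Fujita that a smooth affine surface with $\bar\kappa=-\infty$ contains an $\mathbb{A}^1$-cylinder, i.e.\ is affine-ruled (\cite[Chapter 2, Theorem 2.1.1]{miyanishicrm}). I would not reprove it; I would cite it to get (2) $\Rightarrow$ (3).

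To turn the cylinder into a fibration, I would use \cite[Chapter 3, Lemma 1.3.1]{miyanishicrm} (see also \cite[Lemma 1.8]{russell}): on a smooth affine surface, the projection $U\cong\mathbb{A}^1\times Z\to Z$ extends to a morphism $\pi:X\to C$ onto a smooth curve $C$. Its generic fiber is $\mathbb{A}^1$. To see that $\pi$ is defined everywhere, compactify, resolve the base points of the pencil spanned by the closures of the fibers, and use affineness of $X$: all base points of the pencil lie on the boundary, since a point of $X$ lying on two fibre closures would contradict the fact that these closures meet only at infinity. This gives (3) $\Rightarrow$ (4), which closes the cycle.

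The only real work is in confirming that these cited statements apply in our setting, namely $k$ algebraically closed of characteristic zero and $X$ smooth affine; they do.
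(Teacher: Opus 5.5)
Your proposal is correct and follows essentially the same route as the paper. The paper gives no argument of its own: it cites Miyanishi's book (Ch.~2, Thm.~2.1.1 for $\bar\kappa=-\infty\Rightarrow$ cylinder; Ch.~3, Lemma~1.3.1 for cylinder $\Rightarrow$ $\mathbb{A}^1$-fibration) and Russell's Lemma~1.8 for $\mathbb{A}^1$-uniruled $\Rightarrow$ $\bar\kappa(X)<0$. You differ only in proving that last implication directly, via Iitaka's inequality $\bar\kappa(\mathbb{A}^1\times Y)\ge\bar\kappa(X)$ and additivity for products, and in writing out the two formal implications; both are fine.
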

We will prove the $\A^1$-invariance of $\pi_0^{\A^1}(X)$, for a smooth affine surface $X$ over an algebraically closed field $k$ of characteristic zero, dividing in two classes: one is non $\A^1$-uniruled affine surfaces and the other one is $\A^1$-uniruled affine surfaces. Equivalently in terms of the logarithmic Kodaira dimension (denoted by $\bar{k}(X)$): one class consists such $X$ having $\bar{k}(X)$ is non-negative 
and the other class consists such $X$ having $\bar{k}(X)$ is negative.
	\subsection{$\A^1$-invariance of $\pi_0^{\A^1}(X)$, for non $\A^1$-uniruled, affine surface $X$:}
	
	Suppose, $X$ is a smooth affine non $\A^1$-uniruled surface over an algebraically closed field $k$ of characteristic $0$. In this subsection, we will prove that $\pi_0^{\A^1}(X)$ is $\A^1$-invariant, by proving $\sS(X) \cong \sS^2(X)$ (Theorem \ref{non-uniruled}, compare this with \cite[Theorem 3.14]{bhs} in case of proper, non-uniruled surfaces).

Suppose, $\alpha: \A^1_k \to X$ is a morphism, where $X \in Sm/k$ is affine. Then the image of $\alpha$ (set-theoretic image) is closed in $X$. Indeed, if the image $\text{Im}(\alpha)$ is non-constant, then we extend $\alpha$ to $\Tilde{\alpha}: \mathbb{P}^1_k \to \bar{X}$ ($\bar{X}$ is a smooth compactification of $X$). The morphism $\Tilde{\alpha}$ is a projective morphism, since $\Tilde{\alpha}$ factors as
$$\mathbb{P}^1_k \xrightarrow{\text{graph of } \Tilde{\alpha}} \mathbb{P}^1_k \times_k \bar{X} \xrightarrow{\text{projection}} \overline{X}.$$
So $\Tilde{\alpha}$ is a proper morphism and hence $\text{Im}(\Tilde{\alpha})$ is closed in $\bar{X}$. Since $X$ is affine, so any morphism from $\mathbb{P}^1_k$ to $X$ is constant; thus the point of infinity of $\mathbb{P}^1_k$ maps to a $k$-point in $\bar{X} \setminus X$. Therefore, $\text{Im}(\alpha) = \text{Im}(\Tilde{\alpha}) \cap X$ is closed in $X$. 

 Before proving the main theorem, we need the following technical lemma, which we will use in the proof.

\begin{lemma} \label{not contained}
Suppose, $\theta: U \to X$ is a morphism, where $U, X \in Sm/k$ and $X$ is affine. Assume that there is an $\alpha: \mathbb{A}^1_k \to X$ such that 
$\mathrm{Im}(\theta) \subset \mathrm{Im}(\alpha)$ (Notations \ref{notations}). Then $\theta = \alpha(0) \in \mathcal{S}(X)(U)$, where $\alpha(0)$ is considered as the morphism
$$U \to Spec  \ k \xrightarrow{\alpha(0)} X$$.
\end{lemma}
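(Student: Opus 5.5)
The plan is to show that $\theta$ factors, at least Nisnevich locally on $U$, through the curve $\alpha$, and then to use the $\A^1$-contractibility of $\A^1_k$ to connect the resulting morphism to the constant map $\alpha(0)$ by naive homotopies. First dispose of the trivial case: if $\alpha$ is constant, then $\mathrm{Im}(\theta)$ is the single point $\alpha(0)$ and $\theta$ literally equals the constant morphism. So assume $\alpha$ is non-constant. By the discussion just before the lemma, $\mathrm{Im}(\alpha)$ is then a closed irreducible curve $D \subset X$. I expect the hypothesis $\mathrm{Im}(\theta) \subset \mathrm{Im}(\alpha)$ to mean that $\theta$ factors set-theoretically through $D$. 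Since $U$ is smooth, hence reduced, $\theta$ factors scheme-theoretically through $D_{\text{red}}$.

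Next, lift $\theta$ through $\alpha$. The map $\alpha: \A^1_k \to D$ is finite and birational onto its image up to a finite degree, and it factors through the normalization $\tilde{D} \to D$. Here $\tilde{D}$ is a smooth rational affine curve, namely an open subset of $\A^1_k$. Since $U$ is smooth, hence normal, each component of $U$ that dominates $D$ lifts to $\tilde{D}$: a rational map from a normal variety to the normalization, finite over a morphism to $D$, extends because $\tilde{D}$ is the integral closure. Components that map to a point are constant maps into $\mathrm{Im}(\alpha)$. The resulting $\tilde\theta: U \to \tilde{D} \subset \A^1_k$ may not hit the full $\A^1_k$ domain of $\alpha$, and $\alpha$ itself may not factor as $\A^1 \to \tilde D$ isomorphically. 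Instead, I would work with a generically finite cover $\A^1 \to \tilde{D}$. To avoid lifting problems, it is better to use $\tilde{D} \subset \A^1_k$ open together with the fact that $\tilde{D} \to D \hookrightarrow X$ extends. Indeed, $\alpha$ factors as $\A^1 \xrightarrow{\beta} \tilde{D} \to X$ with $\beta$ surjective and $\tilde{D}$ open in $\A^1$; since $\beta$ is a dominant map from $\A^1$ to a curve that is open in $\A^1$, Lüroth and the surjectivity of $\beta$ force $\tilde D=\A^1$.

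So $\tilde{D} \cong \A^1_k$, and $\theta = \nu \circ \tilde\theta$ with $\nu: \A^1_k \to X$ and $\tilde\theta: U \to \A^1_k$. The straight-line homotopy $H(u,t) = \nu(t\,\tilde\theta(u) + (1-t)c)$, where $\nu(c)=\alpha(0)$ for a suitable $c \in \tilde D(k)$, is then a naive $\A^1$-homotopy from the constant map $\alpha(0)$ to $\theta$. This gives $\theta = \alpha(0)$ in $\sS^{\text{pre}}(X)(U)$, hence in $\sS(X)(U)$. If $U$ is disconnected, do this componentwise; the constant components are handled similarly by the homotopy $\nu(tc'+(1-t)c)$.

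The main obstacle is the lifting step, namely factoring $\theta$ through the normalization of $D$ when $D$ is singular, for instance cuspidal, with unibranch or multibranch points. For a smooth $U$ mapping dominantly to $D$, the factorization through the normalization holds by the universal property of normalization for dominant maps from normal schemes. The delicate cases are the components of $U$ mapping into the singular locus. These are constant, so they cause no problem. If the universal property argument is not clean for non-dominant components, I would fall back to working Nisnevich locally on $U$, which suffices because $\sS(X)$ is a sheaf.
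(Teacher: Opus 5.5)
Your proposal is correct and follows essentially the paper's argument: factor $\theta$ componentwise through the normalization of $\mathrm{Im}(\alpha)$, which is $\mathbb{A}^1_k$ (components with constant image are trivial), and then connect it to $\alpha(0)$ by a straight-line naive homotopy. Your version is slightly more careful than the paper's: you use the normalization map $\nu$ together with a point $c$ satisfying $\nu(c)=\alpha(0)$, whereas the paper writes $\theta=\alpha\circ\tilde{\theta}$ as though $\alpha$ were birational onto its image.
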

\begin{proof}
Since $\text{Im}(\theta)$ is contained in the image of $\mathbb{A}^1_k$, so there is a morphism $\Tilde{\theta}: U \to \mathbb{A}^1_k$ such that $\theta = \alpha \circ \Tilde{\theta}$. Indeed, if there is some irreducible component, say $U_0$ of $U$ such that its image is constant (a $k$-point), then $\theta|_{U_0}$ always factors through $\mathbb{A}^1_k$ (by taking a $k$-point from the fiber of $\alpha$ over that point), otherwise $\theta|_{U_0}: U_0 \to \text{Im}(\alpha)$ is dominant ($\text{Im}(\alpha)$ is a closed subscheme of $X$). 
In this case, $\theta|_{U_0}$ factors through the normalisation of $\text{Im}(\alpha)$, which is $\mathbb{A}^1_k$ itself. Therefore, $\theta$ always factors through $\mathbb{A}^1_k$. 
Consider the naive $\mathbb{A}^1$-homotopy $H: \mathbb{A}^1_U \to X$ defined as
$$(t,x) \mapsto \alpha(t\Tilde{\theta}(x)).$$
Then $H(0, x) = \alpha(0)$ and $H(1, x) = \theta(x)$.
Therefore, $\theta = \alpha(0) \in \mathcal{S}(X)(U)$.
\end{proof}
\begin{corollary} \label{not contained corr}
Suppose, $\theta, \gamma: U \to X$,
where $U, X \in Sm/k$ and $X$ is affine. Assume that there are $\alpha_1, \dots, \alpha_n : \mathbb{A}^1_k \to X$ such that $\alpha_i(1) = \alpha_{i+1}(0)$, for every $i$ and there are $p, q \leq n$ such that $\mathrm{Im}(\theta) \subset \mathrm{Im}(\alpha_p)$ and $\mathrm{Im}(\gamma) \subset \mathrm{Im}(\alpha_q)$. Then, $\theta = \gamma \in \mathcal{S}(X)(U)$.
\end{corollary}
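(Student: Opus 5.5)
By Lemma \ref{not contained}, applied to $\theta$ with $\alpha_p$ and to $\gamma$ with $\alpha_q$, we get
$$\theta = \alpha_p(0) \in \sS(X)(U) \quad\text{and}\quad \gamma = \alpha_q(0) \in \sS(X)(U),$$
where $\alpha_p(0)$ and $\alpha_q(0)$ are regarded as constant morphisms $U \to \Spec k \to X$. So it suffices to show that the constant maps $\alpha_p(0)$ and $\alpha_q(0)$ agree in $\sS(X)(U)$. Without loss of generality take $p \le q$.

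\textbf{Step 1: consecutive constants agree.} For each $i$ with $p \le i < q$, compose the projection $\A^1_U \to \A^1_k$ with $\alpha_i$ to get a naive $\A^1$-homotopy
$$H_i : \A^1_U \to X, \qquad (t,x) \mapsto \alpha_i(t).$$
Its restriction along $i_0$ is the constant map $\alpha_i(0)$, and its restriction along $i_1$ is the constant map $\alpha_i(1) = \alpha_{i+1}(0)$. Hence $\alpha_i(0) \sim \alpha_{i+1}(0)$ already in $X(U)/\sim$.

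\textbf{Step 2: concatenate.} The homotopies $H_p, \dots, H_{q-1}$ form a chain of naive $\A^1$-homotopies with matching endpoints. Therefore $\alpha_p(0) \sim \alpha_q(0)$ in $\sS^{\text{pre}}(X)(U)$, and hence also in $\sS(X)(U)$ after sheafification. Combining this with the two equalities from Lemma \ref{not contained} gives $\theta = \gamma$ in $\sS(X)(U)$.

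No step is a real obstacle. The only point to watch is that Lemma \ref{not contained} provides equality in $\sS(X)(U)$, not merely in the presheaf quotient. This causes no difficulty, because equality in $\sS(X)(U)$ is transitive and the chain of homotopies in Step 2 already lives at presheaf level, so it maps to equality in $\sS(X)(U)$.
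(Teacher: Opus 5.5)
Your proposal is correct and follows the paper's own proof: apply Lemma \ref{not contained} to get $\theta = \alpha_p(0)$ and $\gamma = \alpha_q(0)$ in $\sS(X)(U)$, then join the two constant sections through the chain $\alpha_p, \dots, \alpha_q$. The only difference is that you write out the naive homotopies $(t,x) \mapsto \alpha_i(t)$ explicitly, where the paper just says the points are joined by a chain of $\A^1_k$'s.
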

\begin{proof}
Since $\text{Im}(\theta) \subset \text{Im}(\alpha_p) $ and $\text{Im}(\gamma) \subset \text{Im}(\alpha_q)$, so 
$$\theta = \alpha_p(0) \in \mathcal{S}(X)(U) \text{ and } \gamma = \alpha_q(0) \in \mathcal{S}(X)(U)$$
by Lemma \ref{not contained}. Therefore, $\theta =  \gamma \in \mathcal{S}(X)(U)$, since $\alpha_p$ and $\alpha_q$ are joined by a chain of $\mathbb{A}^1_k$'s.
\end{proof}

\begin{remark} (\cite[Remark 2.6]{choudhury2}) \label{ghost homotopy useful remark}
The morphism $\eta: X \to \sS(X)$ is an epimorphism. A homotopy $H \in \sS(X)(\A^1_U)$ (called an $\A^1$-ghost homotopy \cite[Definition 3.2]{bhs}) is given by a Nisnevich covering $f: V \to \A^1_U$ and a morphism $\phi: V \to X$ which makes the following diagram commutative:
\[
\xymatrix{V \ar[d]^f \ar[r]^{\phi} &X \ar[d]^{\eta}\\
\A^1_U \ar[r]^H &\sS(X)}
\]
Since, the sections
$$ \eta \circ \phi \circ \text{pr}_1 = \eta \circ \phi \circ \text{pr}_2 \in \sS(X)(V \times_{\A^1_U} V),$$
where $\text{pr}_1, \text{pr}_2: V \times_{\A^1_U} V \to V$ are the projections; so there is a Nisnevich covering $V^\prime \to  V \times_{\A^1_U} V$ along with a chain of homotopies
$$G_1, \dots, G_m : \A^1_{V^\prime} \to X \text{ such that } G_1(0)=\phi \circ \text{pr}_1|_{V^\prime} \text{ and }  G_m(1)=\phi \circ \text{pr}_2|_{V^\prime}$$
(Notations \ref{notations}). If the sections 
$$\phi \circ \text{pr}_1|_W = \phi \circ \text{pr}_2|_W: W \to X$$
are equal for some Nisnevich covering $W \to V \times_{\A^1_U} V$, then (since $X$ is a Nisnevich sheaf) there is a morphism $\Tilde{\phi}: \A^1_U \to X$ that lifts $\phi$, that is $\Tilde{\phi} \circ f = \phi$. Thus, $\eta \circ \Tilde{\phi} = H$. But this implies that 
$$H(0) = H(1) \in \sS(X)(U)$$ 
(Notations \ref{notations}). Therefore, if we assume that $H \in \sS(X)(\A^1_U)$ is a non-constant homotopy (Definition \ref{non-constant-homotopy-defn}), then for every Nisnevich covering 
$$W \to  V \times_{\A^1_U} V,$$
 the morphisms
$$\phi \circ \text{pr}_1|_{W} \neq \phi \circ \text{pr}_2|_{W}: W \to X.$$
So if $H$ is a non-constant homotopy, we can assume that $G_i: \A^1_{V^\prime} \to X$ is also a non-constant homotopy, for every $i$.
\end{remark}
We will quickly recall the following notations and terminologies from \cite{choudhury2}.
\begin{notation}\label{notations} \cite[Notations 2.8]{choudhury2} Let $\sF$ be a Nisnevich sheaf of sets in $Sm/k$, and $U\in Sm/k$.
\begin{enumerate}
\item Let $H \in \sF(\A^1_U)$ be a homotopy and $i_0,i_1: U \to \A^1_U$ be the 0-section and 1-section respectively. We denote the elements $H\circ i_0 \in \sF(U)$ by $H(0)$ and $H\circ i_1 \in \sF(U)$ by $H(1)$
respectively.
\item A chain of homotopies $H_1,\dots,H_n \in \sF(\A^1_U)$ means a collection of homotopies $H_i\in\sF(\A^1_U)$ such that $H_i(1)=H_{i+1}(0),$ for every $i$.
\item For a morphism $f: X \to Y$ of schemes, by $\text{Im}(f)$ we mean the set theoretic image of $f$, that is the set $f(X)$, which is a subset of $Y$. If the set $\text{Im}(f)$ consists more than one element, then we say $f: X \to Y$ is a non-constant morphism. Note that this is different from non-constant homotopy $H \in \sF(\A^1_U)$, which means that $H(0) \neq H(1) \in \sF(U)$
\item For a scheme $X$ over $k$, by ``there is a line in $X$" (or $X$ contains a line) we mean there exists a non-constant morphism $\gamma: \A^1_k \to X$. For a subset $Y \subset X$, by ``$Y$ contains a line" means that there is a line in $X$, given by $\gamma: \A^1_k \to X$ such that $Im(\gamma) \subset Y$.

\end{enumerate}
\end{notation}
\begin{definition}\label{non-constant-homotopy-defn} (\cite[Definition 2.4]{choudhury2}, see also \cite[Example 2.5]{choudhury2})
	Suppose, 
	$\sF$ is a Nisnevich sheaf of sets on $Sm/k$ and 
	$X, U \in Sm/k$. A homotopy $H \in \sF(\A^1_U)$ is said to be a non-constant homotopy, if the sections
	$$H (0) \neq H (1) \in \sF(U),$$
	
\end{definition}
\begin{remark} \label{line remark}
If there is a non-constant homotopy $H: \A^1_U \to X$ (Definition \ref{non-constant-homotopy-defn}), for some $X, U \in Sm/k$, $U$ is irreducible, then there is a line in $X$ (Notations \ref{notations}). Indeed, since $H: \A^1_U \to X$ is a non-constant, homotopy so $H(0) \neq H(1): U \to X$. Thus there is $x \in U(k)$ such that 
$$H(0)(x) \neq H(1)(x).$$
So the composition of morphisms
$$\gamma: \A^1_k \xrightarrow{id, x} \A^1_U \xrightarrow{H} X$$
is a line in $X$ (Notations \ref{notations}).
\end{remark}
\begin{theorem} \label{non-uniruled}
Let $k$ be an algebraically closed field of characteristic zero. Suppose that $X \in Sm/k$ is an affine surface, which is not $\mathbb{A}^1$-uniruled. Then the canonical morphism $\mathcal{S}(X) \to \mathcal{S}^2(X)$ is an isomorphism.
\end{theorem}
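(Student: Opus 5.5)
Since $\sS(X)\to\sS^2(X)$ is an epimorphism of Nisnevich sheaves, it suffices to prove injectivity on stalks, i.e.\ on sections over essentially smooth Henselian local schemes $U$ (and then spread out to a smooth $U$ as needed). Concretely, I will show that every $\A^1$-ghost homotopy $H\in\sS(X)(\A^1_U)$ satisfies $H(0)=H(1)\in\sS(X)(U)$. Then two sections identified in $\sS^2(X)(U)$, being joined by a chain of such ghost homotopies after Nisnevich refinement, are already equal in $\sS(X)(U)$. The plan follows the strategy of \cite[Theorem 3.14]{bhs} and \cite[Proposition 2.9]{choudhury2}, with the properness of $X$ replaced by affineness and the non-uniruledness replaced by the non-$\A^1$-uniruledness hypothesis together with Theorem \ref{all uniruled equivalences}.

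\textbf{Step 1: the lines in $X$ form a finite configuration.} Suppose $H$ is a non-constant ghost homotopy. By Remark \ref{ghost homotopy useful remark} it is represented by a Nisnevich cover $f:V\to\A^1_U$, a morphism $\phi:V\to X$, a Nisnevich cover $V'\to V\times_{\A^1_U}V$, and a chain of non-constant homotopies $G_1,\dots,G_m:\A^1_{V'}\to X$ joining $\phi\circ\pr_1$ to $\phi\circ\pr_2$. By Remark \ref{line remark}, each $G_i$ produces lines in $X$. Since $X$ is not $\A^1$-uniruled, there is no dominant generically finite map $\A^1\times Y\to X$. Hence the images of all lines $\A^1_k\to X$ lie in a proper closed subset. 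By the discussion preceding Lemma \ref{not contained}, each such image is closed, so it is a curve whose normalization is $\A^1$.

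The main point is to show there are only finitely many such curves, so that their union $Z\subset X$ is a closed curve. If infinitely many lines existed, the space of morphisms $\A^1\to X$ of bounded degree (in a fixed compactification $\bar X$, with the point at infinity going to the boundary) would have a positive-dimensional component. Its evaluation map would then be dominant onto $X$, and this contradicts non-$\A^1$-uniruledness. The degree bound has to come from the family $G_i$ itself: the lines appearing in $G_i$ are parametrized by $V'$, which is of finite type, so they lie in finitely many bounded families. Each such family sweeping out a surface would give a dominant generically finite $\A^1\times Y\to X$.

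\textbf{Step 2: locate $\phi$ on the line configuration.} For each $i$, $G_i$ restricted to $\{t\}\times V'$ varies only along lines as $t$ varies. So for every point $v\in V'$, the curve $t\mapsto G_i(t,v)$ is either constant or contained in $Z$. Let $W\subseteq V'$ be the locus where the chain is non-constant. This locus is nonempty, and since the homotopies are non-constant on every Nisnevich refinement, it is dense after shrinking. Hence $\phi\circ\pr_1$ maps a dense set into $Z$, and so $\phi(V)\subset Z$ on the relevant components. Restricting to a connected component of $Z$, which is a chain of lines as in Corollary \ref{not contained corr}, each connected piece of $V$ maps into $\mathrm{Im}(\alpha_p)$ for some line $\alpha_p$ of a chain $\alpha_1,\dots,\alpha_n$ with $\alpha_i(1)=\alpha_{i+1}(0)$.

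\textbf{Step 3: conclude.} Since $U$ is Henselian local, the sections $i_0^*f$ and $i_1^*f$ admit sections, so $H(0)$ and $H(1)$ are represented by honest morphisms $\theta,\gamma:U\to X$ whose images lie in images of lines of the same connected chain. Corollary \ref{not contained corr} then gives $\theta=\gamma\in\sS(X)(U)$, that is, $H(0)=H(1)$. Iterating this along chains of ghost homotopies proves injectivity of $\sS(X)(U)\to\sS^2(X)(U)$.

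I expect Step 1 to be the main obstacle. Finiteness of the lines, together with connectivity of the relevant configuration, has to be extracted from non-$\A^1$-uniruledness via a bounded family or Hom-scheme argument on a compactification. The connectedness needed in Step 2 is also delicate: the images must lie in a single chain rather than in disjoint lines. I would handle this using the connectedness of $\A^1_U$ and of the fibers of $V'$ over it.
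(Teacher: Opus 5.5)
Your strategy is the paper's in spirit: non-$\A^1$-uniruledness forces every non-constant naive homotopy in the data of a ghost homotopy into a single line, and then Corollary \ref{not contained corr} identifies $H(0)$ and $H(1)$. However, Step 2 has a genuine gap.

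You cannot make the non-constant locus of the chain $G_1,\dots,G_m$ dense by shrinking. $V'$ must remain a Nisnevich cover of $V\times_{\A^1_U}V$. The diagonal of that fiber product is a union of components on which $\phi\circ\pr_1=\phi\circ\pr_2$, and there the chain may be constant. Remark \ref{ghost homotopy useful remark} only says the non-constant locus is non-empty on every refinement, not that it is dense.

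So your argument does not rule out $\phi$ being dominant on some components of $V$, which is possible when $\dim U\geq 1$. More to the point, nothing places the images of your two lifts $\theta,\gamma$ of $H(0),H(1)$ in lines of one and the same chain. The mechanism you suggest, connectedness of the fibers of $V'$ over $\A^1_U$, cannot supply this, since those fibers are discrete (the maps are \'etale).

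Separately, the global finiteness of lines in Step 1 is unjustified, because degrees are unbounded. It is also unnecessary. All you need is local: on each irreducible component of $V'$, a $G_j$ that is not constant in $t$ has image closure equal to a single line image. This follows from non-$\A^1$-uniruledness after cutting the parameter space down to a curve.

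The missing idea is a pairwise analysis of the components of $V$; this is what the case split in Proposition \ref{ghost homotopy implies dominant 1} is doing. Let $V_r$ and $V_s$ be the components containing the lifts of $\{0\}\times U$ and $\{1\}\times U$. Every component of $V$ has open dense image in the irreducible scheme $\A^1_U$, so $V_r\times_{\A^1_U}V_s\neq\emptyset$. For a component $V'_0$ of $V'$ over it, $\pr_1(V'_0)$ and $\pr_2(V'_0)$ are dense in $V_r$ and $V_s$. There are then two cases.

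\begin{enumerate}
\item If some $G_j$ is non-constant in $t$ on $V'_0$, the first and last such $G_j$ give line images containing $\overline{\phi(V_r)}\supset\mathrm{Im}(\theta)$ and $\overline{\phi(V_s)}\supset\mathrm{Im}(\gamma)$. The intermediate ones link these into a chain of intersecting lines, so Corollary \ref{not contained corr} applies.
\item If every $G_j$ is constant in $t$ on every such $V'_0$, then $\overline{\phi(V_r)}=\overline{\phi(V_s)}$. Now look at every other pair $(V_p,V_q)$. A non-constant chain over $V_p\times_{\A^1_U}V_q$ puts $\overline{\phi(V_p)}$ inside a line image. The pair $(V_r,V_p)$ in turn either carries a non-constant chain or forces $\overline{\phi(V_r)}=\overline{\phi(V_p)}$. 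Either way the common closure lies in one line image, and Lemma \ref{not contained} applies.
\item The only remaining possibility is that all chains on all components of $V'$ are constant. Then $\phi\circ\pr_1=\phi\circ\pr_2$ on $V'$, so $\phi$ descends to a morphism $\A^1_U\to X$, and $H(0)=H(1)$ in $\sS(X)(U)$ trivially.
\end{enumerate}

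Without this dichotomy, Step 3 has nothing to apply to.
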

\begin{proof}
Suppose $U \in Sm/k$. The canonical morphism $\mathcal{S}(X) \to \mathcal{S}^2(X)$ is an epimorphism of Nisnevich sheaves (Remark \ref{ghost homotopy useful remark}). We will prove that the map
$$\mathcal{S}(X)(U) \to \mathcal{S}^2(X)(U)$$
is injective. Suppose $\alpha, \beta \in \mathcal{S}(X)(U)$ such that $\alpha = \beta \in \mathcal{S}^2(X)(U)$. If possible, assume that $\alpha \neq \beta \in \mathcal{S}(X)(U)$. Since $\alpha = \beta \in \mathcal{S}^2(X)(U)$, there is a Nisnevich covering $U^\prime \to U$ such that $\alpha|_{U^\prime} = \beta|_{U^\prime} \in \mathcal{S}^{\text{pre}}(\mathcal{S}(X))(U^\prime)$. Since $\alpha \neq \beta \in \mathcal{S}(X)(U)$, so $\alpha|_{U^\prime} \neq \beta|_{U^\prime} \in \mathcal{S}(X)(U^\prime)$. 
Thus there is an irreducible component $W$ of $U^\prime$ such that $\alpha|_W \neq \beta|_W \in \mathcal{S}(X)(W)$. Since, 
$$\alpha|_{U^\prime} = \beta|_{U^\prime} \in \mathcal{S}^{\text{pre}}(\mathcal{S}(X))(U^\prime),$$
so the sections $\alpha|_W = \beta|_W \in \mathcal{S}^{\text{pre}}(\mathcal{S}(X))(W)$. Therefore there is a non-constant homotopy $H \in \mathcal{S}(X)(\mathbb{A}^1_W)$ (Definition \ref{non-constant-homotopy-defn}) such that $H(0) = \alpha|_W$. 
In the next Proposition (Proposition \ref{ghost homotopy implies dominant 1}) we will prove that there is a dominant morphism $G: \mathbb{A}^1_Y \to X$, with $Y$ irreducible such that 
$$G(0, -) \neq G(1, -): Y \to X.$$ 
We can take here $dim(Y) = 1$. Thus we arrive at a contradiction, since $X$ is not an $\mathbb{A}^1$-uniruled surface. Hence the canonical map
$$\mathcal{S}(X) \to \mathcal{S}^2(X)$$
is an isomorphism.
Therefore, it remains to prove that if there is a non-constant homotopy $H \in \sS(X)(\A^1_U)$ (Definition \ref{non-constant-homotopy-defn}), then $X$ admits a dominant morphism $G: \A^1_W \to X$, which is also a non-constant homotopy (Definition \ref{non-constant-homotopy-defn}), for some $W \in Sm/k$ irreducible.

\end{proof}
The next proposition is similar to \cite[Proposition 2.9]{choudhury2}. 

\begin{proposition} \label{ghost homotopy implies dominant 1}
Let $X \in Sm/k$ be an 
affine surface and $U \in Sm/k$ be irreducible. 
Suppose that $H \in \mathcal{S}(X)(\mathbb{A}^1_U)$ is a non-constant homotopy (Definition \ref{non-constant-homotopy-defn}).
Then there is $W \in Sm/k$ irreducible, along with a dominant morphism $G: \mathbb{A}^1_W \to X$ such that $G(0,-) \neq G(1, -): W \to X$ (i.e $G$ is also a non-constant homotopy in the sense of Definition \ref{non-constant-homotopy-defn}). 
\end{proposition}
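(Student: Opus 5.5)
We unwind the ghost homotopy into genuine homotopies, following Remark \ref{ghost homotopy useful remark} and the strategy of \cite[Proposition 2.9]{choudhury2}. Write $H$ via a Nisnevich covering $f: V \to \A^1_U$ and a morphism $\phi: V \to X$ with $\eta\circ\phi = H\circ f$. Since $H$ is non-constant, for every Nisnevich covering of $V\times_{\A^1_U}V$ the two pullbacks $\phi\circ\mathrm{pr}_1$ and $\phi\circ\mathrm{pr}_2$ differ. We then obtain a covering $V'\to V\times_{\A^1_U}V$ and a chain of homotopies $G_1,\dots,G_m:\A^1_{V'}\to X$ joining them, each of which may be taken non-constant.

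The plan has three steps.

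\textbf{Step 1.} We may assume $U$ is irreducible, so some component $V'_0$ of $V'$ carries a non-constant $G_i$ in the sense of Definition \ref{non-constant-homotopy-defn}. By Remark \ref{line remark}, $X$ then contains lines.

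\textbf{Step 2.} We argue by contradiction, supposing no dominant non-constant homotopy $\A^1_W\to X$ exists. Then every $G_i$ has image of dimension at most one. Indeed, restrict $G_i$ to components and take $W$ a component of $V'$. If $G_i|_{\A^1_W}$ is dominant, it is itself the required $G$, since non-constancy is preserved after restricting to a suitable dense open part, using $G(0)\neq G(1)$ as morphisms. So each $\overline{\mathrm{Im}(G_i)}$ is a curve or a point. In a family parametrised by $V'$ of $\A^1$'s whose total image is a curve, the lines must sweep out finitely many closed curves that are images of $\A^1$. Here we use the fact, proved just before Lemma \ref{not contained}, that images of lines in affine $X$ are closed. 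Moreover, each irreducible component of $\overline{\mathrm{Im}(G_i)}$ through which a non-constant line passes is the image of some $\alpha:\A^1_k\to X$.

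\textbf{Step 3.} Lemma \ref{not contained} and Corollary \ref{not contained corr} then force $\phi\circ\mathrm{pr}_1$ and $\phi\circ\mathrm{pr}_2$ to coincide in $\sS(X)(V'_0)$. Since $\sS(X)$ is a sheaf, running this over all components shows that $\eta\circ\phi$ descends consistently. Hence $\phi$ restricted to the relevant pieces factors, in $\sS(X)$, through constant points of a connected chain of lines. Applying this on both $H(0)$ and $H(1)$, we would get that $H(0)=H(1)$ in $\sS(X)(U)$, a contradiction. More precisely, the images of $\phi$ over the fibres of $f$ above $\{0\}\times U$ and $\{1\}\times U$ would lie in a connected union of lines, and Corollary \ref{not contained corr} identifies them in $\sS(X)(U)$.

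We expect the main obstacle to be Step 2/3: making rigorous that a family of non-constant homotopies with non-dominant total image is confined to finitely many lines forming a connected chain. We also need to show that the endpoints $H(0)$ and $H(1)$ genuinely land in that chain, which requires tracking through the Nisnevich covering $V\to\A^1_U$, where $\phi$ may vary. We would first handle this by restricting to the generic point of $U$ and using the field case, where $\sS(X)$ and $\pi_0$ of $Sing_*(X)$ agree. We would then spread out over a dense open of $U$, using dimension counts on the surface $X$: a two-dimensional family of lines with one-dimensional image must have constant image curve.
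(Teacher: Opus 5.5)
\textbf{Verdict: there is a gap.} Your overall shape matches the paper: argue by contradiction, observe that a non-dominant, non-constant $G_i$ on an irreducible component of $V'$ has image closure equal to the image of a single line, and finish with Corollary \ref{not contained corr}. But Step 3 does not supply the argument that ties the endpoints of $H$ to such a chain of lines, and that is the entire content of the proof.

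\textbf{What Step 3 gets wrong.} The equality $\eta\circ\phi\circ\mathrm{pr}_1=\eta\circ\phi\circ\mathrm{pr}_2$ in $\sS(X)(V')$ is already witnessed by the chain $G_1,\dots,G_m$. So invoking Lemma \ref{not contained} there proves nothing; the inequality you must contradict, $H(0)\neq H(1)$, lives on $U$.
\begin{itemize}
\item Your claim that the images of $\phi$ over the fibres above $\{0\}\times U$ and $\{1\}\times U$ lie in one connected union of lines is unproved. Step 2 only controls the images of the $G_i$, not of $\phi$, which may be dominant on some component of $V$. Nothing links particular lifts of $H(0)$ and $H(1)$ to a particular chain.
\item The proposed remedy fails. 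Proving $H(0)=H(1)$ at the generic point of $U$ and spreading out to a dense open $U_0$ does not contradict $H(0)\neq H(1)$ in $\sS(X)(U)$. That would need injectivity of $\sS(X)(U)\to\sS(X)(U_0)$ or of $\sS(X)(U)\to\sS(X)(k(U))$, which nothing here provides and the paper never uses.
\item The equality has to be produced over a Nisnevich cover of $U$ itself. Lemma \ref{not contained} is built for exactly this, since it gives a global naive homotopy over any $W$.
\end{itemize}

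\textbf{The missing idea, as the paper does it.}
\begin{enumerate}
\item Pull $f$ back along $i_0$ and $i_1$ to get a Nisnevich cover $\tilde U\to U$ with lifts $\theta,\psi:\tilde U\to V$ of $H(0)$ and $H(1)$.
\item Take a component $W$ of $\tilde U$ on which $\theta_1=(\phi\circ\theta)|_W$ and $\theta_2=(\phi\circ\psi)|_W$ still differ in $\sS(X)(W)$.
\item Choose components $V_r,V_s$ of $V$ with $\mathrm{Im}\,\theta_1\subseteq\overline{\phi(V_r)}$ and $\mathrm{Im}\,\theta_2\subseteq\overline{\phi(V_s)}$. Then take a component $V_0$ of $V'$ lying over $V_r\times_{\A^1_U}V_s$. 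This fibre product is nonempty because both factors map onto dense opens of the irreducible $\A^1_U$.
\item Since $V_0\to V_r$ and $V_0\to V_s$ are \'etale, hence open, you get $\overline{\mathrm{Im}(\phi\circ\mathrm{pr}_1|_{V_0})}=\overline{\phi(V_r)}$ and $\overline{\mathrm{Im}(\phi\circ\mathrm{pr}_2|_{V_0})}=\overline{\phi(V_s)}$.
\item Discard the $G_i$ that are constant on $V_0$. If none of the rest is dominant, they are lines meeting consecutively, the first containing $\mathrm{Im}\,\theta_1$ and the last containing $\mathrm{Im}\,\theta_2$. Corollary \ref{not contained corr} then gives $\theta_1=\theta_2$ in $\sS(X)(W)$, a contradiction.
\end{enumerate}
You still need a case split for when one closure $\overline{\phi(V_r)}$ already contains both images. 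In that case $\phi\circ\mathrm{pr}_1$ and $\phi\circ\mathrm{pr}_2$ may coincide on the chosen $V_0$, for example the diagonal component. You must then use another component over $V_r\times_{\A^1_U}V_p$ on which they differ, and only the first homotopy in the chain matters.

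\textbf{A smaller point in Step 2.} A dominant $G_i|_{\A^1_W}$ need not be non-constant on $W$: take $G_i(t,v)=\phi(\mathrm{pr}_1v)$ with $\phi$ dominant. You must first discard the homotopies that are constant on the chosen component before concluding that a dominant one is the required $G$.
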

\begin{proof}
The homotopy $H \in \sS(X)(\A^1_U)$ is non-constant. Following Remark \ref{ghost homotopy useful remark}, 
there is a Nisnevich covering $V^\prime \to V \times_{\mathbb{A}^1_U} V$ and a chain of non-constant naive $\mathbb{A}^1$-homotopies (Notations \ref{notations}) $G_1, \dots, G_m : \mathbb{A}^1_{V^{\prime}} \to X$ such that 
$$G_1(0) = \phi \circ \text{pr}_1|_{V^\prime} \text{ and } G_m(1) = \phi \circ \text{pr}_2|_{V^\prime}.$$
We claim that $H(0)$ and $H(1)$ (Notations \ref{notations}) lift to $V$, over some Nisnevich covering $\Tilde{U} \to U$. This means that there is a Nisnevich covering $\Tilde{U} \to U$ and there are morphisms $\theta, \psi: \Tilde{U} \to V$ such that the following diagrams are commutative:
 
\[\xymatrixcolsep{3pc}
\xymatrix{\tilde{U}\ar[d] \ar[r]^{\theta} &V \ar[d]^f \ar[r]^{\phi} &X \ar[d]^{\eta} &\tilde{U}\ar[d] \ar[r]^{\psi} &V \ar[d]^f \ar[r]^{\phi} &X \ar[d]^{\eta}\\
U \ar[r]^{i_0} &\A^1_U \ar[r]^H &\sS(X) &U \ar[r]^{i_1} &\A^1_U \ar[r]^H &\sS(X)} 
\]

Indeed, by taking the pullback of $f$ with respect to $i_0, i_1: U \to \A^1_U$,
there are Nisnevich covering $U^\prime \to U$ and $U^{\prime \prime} \to U$ such that $H(0)|_{U^\prime}$ and $H(1)|_{U^{\prime \prime}}$ lift to $V$. Now taking a common refinement $\Tilde{U}$ of $U^\prime$ and $U^{\prime \prime}$, both the sections $H(0)|_{\Tilde{U}}$ and $H(1)_{\Tilde{U}}$ lift to $V$.

So,
$$H(0)|_{\Tilde{U}} =\eta \circ \phi \circ \theta \in \mathcal{S}(X)(\Tilde{U}) \text{ and } H(1)|_{\Tilde{U}} = \eta \circ \phi \circ \psi \in \mathcal{S}(X)(\Tilde{U}).$$
Since $H$ is a non-constant homotopy, so 
$$\eta \circ \phi \circ \theta \neq \eta \circ \phi \circ \psi \in \mathcal{S}(X)(\Tilde{U}).$$
Thus there is an irreducible component (which is also a connected component) $W$ of $\Tilde{U}$ such that 
$$(\eta \circ \phi \circ \theta)|_W \neq (\eta \circ \phi \circ \psi)|_W \in \mathcal{S}(X)(W).$$
Suppose, $(\phi \circ \theta)|_W = \theta_1 \in X(W)$ and $(\phi \circ \psi)|_W = \theta_2 \in X(W)$. The sections 
$$\eta \circ \theta_1 \neq \eta \circ \theta_2 \in \mathcal{S}(X)(W).$$
So by Corollary \ref{not contained corr}, there exists no chain of affine lines $\gamma_1, \dots, \gamma_n: \A^1_k \to X$ (Notations \ref{notations}) in $X$ such that $\text{Im}(\theta_1) \subset \text{Im}(\gamma_i)$ and $\text{Im}(\theta_2) \subset \text{Im}(\gamma_j)$, for some $i,j$.

Suppose, $V  = \coprod_{i =1}^n V_i$, where $V_i$'s are the irreducible components of $V$, which are also the connected components of $V$. Since $W$ is irreducible, $\text{Im}(\theta_1) \subset \overline{\phi(V_i)}$ and $\text{Im}(\theta_2) \subset \overline{\phi(V_j)}$, for some $i, j$ (Notations \ref{notations}(3)). 
We complete the proof considering the following cases.

\

\textbf{Case 1:} \textbf{Suppose, $\text{Im}(\theta_1), \text{Im}(\theta_2) \subseteq \overline{\phi(V_p)}$, for every $p$.}

  Since $H$ is a non-constant homotopy (Definition \ref{non-constant-homotopy-defn}), there is an irreducible component $V_0$ of $V^{\prime}$ such that (Remark \ref{ghost homotopy useful remark})
$$\phi \circ \text{pr}_1|_{V_0} \neq \phi \circ \text{pr}_2|_{V_0}: V_0 \to X.$$
We can assume that the naive $\mathbb{A}^1$-homotopy $G_1|_{\mathbb{A}^1_{V_0}}$ is non-constant.
Suppose, $V_0$ maps to $V_r \times_{\mathbb{A}^1_U} V_s$, for some $r, s \leq n$. Since the projection maps and the morphism $V^{\prime} \to V \times_{\mathbb{A}^1_U} V$ are the \'etale maps, so $\overline{\text{Im}(G_1|_{\mathbb{A}^1_{V_0}}(0))} = \overline{\phi(V_r)}$. 
Since $G_1|_{\mathbb{A}^1_{V_0}}$ is a non-constant homotopy (Notations \ref{notations}), so $\text{Im}(G_1|_{\mathbb{A}^1_{V_0}})$ contains a line, say $\gamma: \A^1_k \to X$ (Remark \ref{line remark}). Since $X$ is affine, so $\text{Im}(\gamma)$ is closed in $X$. If $\overline{\text{Im}(G_1|_{\mathbb{A}^1_{V_0}})} = \text{Im}(\gamma)$, then by our assumption in this case, $\text{Im}(\theta_1), \text{Im}(\theta_2) \subset \text{Im}(\gamma)$, since $\overline{\text{Im}(G_1|_{\mathbb{A}^1_{V_0}}(0))} = \overline{\phi(V_r)}$. This implies that $\theta_1 = \theta_2 \in \mathcal{S}(X)(W)$, by Corollary \ref{not contained corr} and it is a contradiction. Therefore, the non-constant homotopy $G_1|_{\mathbb{A}^1_{V_0}}$ is dominant.

\

\textbf{Case 2:} \textbf{Suppose there is some $p$ such that $\text{Im}(\theta_1) \nsubseteq \overline{\phi(V_p)}$.}

Case 2 consists two subcases:

     \textbf{Subcase 2(a):} \textbf{Renumbering if necessary, assume that there is some $\mathbf{r < n}$ such that 
$$\text{Im}(\theta_1), \text{Im}(\theta_2) \subseteq \overline{\phi(V_1)}, \dots, \overline{\phi(V_r)}$$ 
and $\text{Im}(\theta_1)  \nsubseteq \overline{\phi(V_p)}$, for some $p >r$.
}

There is an irreducible component $V_0$ of $V^\prime$ that maps to $V_r \times_{\mathbb{A}^1_U} V_p$. Since the projection maps and the morphism $V^{\prime} \to V \times_{\mathbb{A}^1_U} V$ are the \'etale maps, so 
$$\overline{\text{Im}(\phi \circ \text{pr}_1|_{V_0})} = \overline{\phi(V_r)} \text{ and } \overline{\text{Im}(\phi \circ \text{pr}_2|_{V_0})} = \overline{\phi(V_p)}.$$
Since $\text{Im}(\theta_1) \nsubseteq \overline{\phi(V_p)}$, so
$$\phi \circ \text{pr}_1|_{V_0} \neq \phi \circ \text{pr}_2|_{V_0}: V_0 \to X.$$
Thus we can assume that the naive $\mathbb{A}^1$-homotopy $G_1|_{\mathbb{A}^1_{V_0}}$ is a non-constant homotopy. So $\text{Im}(G_1|_{\mathbb{A}^1_{V_0}})$ contains a line $\gamma: \A^1_k \to X$ (Remark \ref{line remark}). Since $X$ is affine, so $\text{Im}(\gamma)$ is closed in $X$.  If $\overline{\text{Im}(G_1|_{\mathbb{A}^1_{V_0}})} = \text{Im}(\gamma)$, then by our assumption $\text{Im}(\theta_1), \text{Im}(\theta_2) \subset \text{Im}(\gamma)$, since $G_1|_{\mathbb{A}^1_{V_0}}(0) = \phi \circ \text{pr}_1|_{V_0}$. This implies that $\theta_1 = \theta_2 \in \mathcal{S}(X)(W)$, by Corollary \ref{not contained corr} and it is a contradiction. Therefore, the non-constant homotopy $G_1|_{\mathbb{A}^1_{V_0}}$ is dominant.

\
\textbf{Subcase 2(b):} \textbf{There exist $r,s$ such that 
$$\text{Im}(\theta_1) \subseteq \overline{\phi(V_r)}, \ \text{Im}(\theta_2) \subseteq \overline{\phi(V_s)} \text{ and } \text{Im}(\theta_2) \nsubseteq \overline{\phi(V_r)}.$$
}

There is an irreducible component $V_0$ of $V^\prime$ that maps to $V_r \times_{\mathbb{A}^1_U} V_s$. Since the projection maps and the morphism $V^{\prime} \to V \times_{\mathbb{A}^1_U} V$ are the \'etale maps, so 
$$\overline{\text{Im}(\phi \circ \text{pr}_1|_{V_0})} = \overline{\phi(V_r)} \text{ and } \overline{\text{Im}(\phi \circ \text{pr}_2|_{V_0})} = \overline{\phi(V_s)}.$$
Since $\text{Im}(\theta_2) \nsubseteq \overline{\phi(V_r)}$, so
$$\phi \circ \text{pr}_1|_{V_0} \neq \phi \circ \text{pr}_2|_{V_0}.$$
We can assume all the naive $\mathbb{A}^1$-homotopies $G_1|_{\mathbb{A}^1_{V_0}}, \dots, G_m|_{\mathbb{A}^1_{V_0}}$ are non-constant (Definition \ref{non-constant-homotopy-defn}). Since every $G_i|_{\mathbb{A}^1_{V_0}}$ is non-constant, so $\text{Im}(G_i|_{\mathbb{A}^1_{V_0}})$ contains a line, say $\gamma_i: \A^1_k \to X$ (Remark \ref{line remark}). Since $X$ is affine, $\text{Im}(\gamma)$ is closed in $X$. If for every $i$, $\overline{\text{Im}(G_i|_{\mathbb{A}^1_{V_0}})} = \text{Im}(\gamma_i)$, 
then there is a chain of affine lines $\gamma_1, \dots, \gamma_m: \A^1_k \to X$ such that (Notations \ref{notations})
$$\text{Im}(\theta_1) \subset \text{Im}(\gamma_1) \text{ and } \text{Im}(\theta_2) \subset \text{Im}(\gamma_m),$$
since
$$G_1|_{\mathbb{A}^1_{V_0}}(0) = \phi \circ \text{pr}_1|_{V_0} \text{ and } G_m|_{\mathbb{A}^1_{V_0}}(1) = \phi \circ \text{pr}_2|_{V_0}$$ 
and this would imply that $\theta_1 = \theta_2 \in \mathcal{S}(X)(W)$, by Corollary \ref{not contained corr}. It is a contradiction. Therefore, there is some $j$ such that $\overline{\text{Im}(G_j|_{\mathbb{A}^1_{V_0}})}$ properly contains a line. Thus the non-constant homotopy $G_j|_{\mathbb{A}^1_{V_0}}$ is dominant. 

\

\textbf{Case 3:} \textbf{Suppose there is some $q$ such that $\text{Im}(\theta_2) \nsubseteq \overline{\phi(V_q)}$.}

Case 3 is similar to Case 2. Hence, in this Case also, there exists a non-constant homotopy $G: \A^1_Y \to X$ (Definition \ref{non-constant-homotopy-defn}), for some $Y \in Sm/k$ irreducible such that $G$ is a dominant morphism.

Therefore, the Proposition is proved.
\end{proof}

\begin{remark}
If $X$ is not an $\mathbb{A}^1$-uniruled surface, then in each cases $\overline{\text{Im}(G_i)}$ has dimension $1$. Since the image of an affine line is closed in $X$, so $\overline{\text{Im}(G_i)}$ is the image of an affine line. Thus $\theta_1 = \theta_2 \in \mathcal{S}(X)(W)$.
\end{remark}

\begin{corollary} \label{termination}
Suppose, $X \in Sm/k$ is an affine surface, which is not an $\A^1$-uniruled surface. Then the canonical epimorphism 
$$\mathcal{S}(X) \to \mathcal{L}(X)$$
is an isomorphism.
\end{corollary}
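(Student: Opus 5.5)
The plan is to deduce the corollary from Theorem \ref{non-uniruled} together with the construction of $\sL$ as a filtered colimit. Recall that
$$\sL(X) = \varinjlim_{n \geq 0} \sS^n(X),$$
with transition maps the canonical epimorphisms $\sS^n(X) \to \sS^{n+1}(X)$. The canonical epimorphism $\sS(X) \to \sL(X)$ is the colimit structure map at stage $n=1$. So it suffices to show that every transition map $\sS^n(X) \to \sS^{n+1}(X)$ is an isomorphism for $n \geq 1$. The colimit is then constant from stage $1$ on, which gives $\sS(X) \cong \sL(X)$.

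\textbf{Step 1.} By Theorem \ref{non-uniruled}, the transition map $\sS(X) \to \sS^2(X)$ is an isomorphism.

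\textbf{Step 2.} We argue by induction, using that $\sS$ is a functor on Nisnevich sheaves and that the transition map is a natural transformation $\sS^n \to \sS^{n+1}$. Suppose $\sS^{n}(X) \to \sS^{n+1}(X)$ is an isomorphism. The next transition map $\sS^{n+1}(X) \to \sS^{n+2}(X)$ is the canonical map $\eta_{\sS^n(X)} : \sS^n(X) \to \sS(\sS^n(X))$, transported along this isomorphism. Naturality of $\eta: \mathrm{id} \to \sS$ says that $\eta_{\sS(\sF)}$ is identified with $\sS(\eta_\sF)$ whenever $\eta_\sF$ is an isomorphism; this holds because $\sS(\eta_{\sF})\circ \eta_{\sF} = \eta_{\sS\sF}\circ\eta_\sF$ and $\eta_\sF$ is an epimorphism. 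Applying the functor $\sS$ to an isomorphism gives an isomorphism, so the transition map at stage $n+1$ is again an isomorphism.

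The main point to verify is precisely this identification of $\eta_{\sS(\sF)}$ with $\sS(\eta_\sF)$, i.e. the naturality bookkeeping. Concretely, if $\eta_\sF:\sF\to\sS(\sF)$ is an isomorphism, the two maps $\sS(\sF) \to \sS^2(\sF)$ agree after precomposing with the epimorphism $\eta_\sF$, hence they are equal. There is no new geometric input: all the geometry, namely the ghost-homotopy argument and the non-uniruledness of $X$, is contained in Theorem \ref{non-uniruled}.

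\textbf{Step 3.} Conclude that $\sS(X) \to \sL(X)$ is an isomorphism. Combined with \cite[Corollary 2.18]{bhs}, this also yields that $\pi_0^{\A^1}(X) \cong \sS(X)$ is $\A^1$-invariant.
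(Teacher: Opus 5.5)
Your proposal is correct and follows the paper's proof. Theorem \ref{non-uniruled} gives $\sS(X)\cong\sS^2(X)$, this propagates to every transition map $\sS^n(X)\to\sS^{n+1}(X)$, and so the filtered colimit $\sL(X)$ stabilizes from stage $1$. Your naturality identity $\eta_{\sS(\sF)}=\sS(\eta_\sF)$ holds for every $\sF$, since $\eta_\sF$ is always an epimorphism, and it spells out the inductive step that the paper asserts without comment.
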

\begin{proof}
Since the canonical morphism $\mathcal{S}(X) \to \mathcal{S}^2(X)$ is an isomorphism by Theorem \ref{non-uniruled}, so all the canonical morphisms $\mathcal{S}^n(X) \to \mathcal{S}^{n+1}(X)$ is an isomorphism, for every $n$. 
Since the sheaf $\mathcal{L}(X) =  \underset{n}{\varinjlim} \; \mathcal{S}^n(X)$, 
so the canonical morphism
$$\mathcal{S}(X) \to \mathcal{L}(X)$$
is an isomorphism. 
\end{proof}
\begin{corollary} \label{invariant non-uniruled}
Let $k$ be an algebraically closed field of characteristic $0$. Suppose that $X \in Sm/k$ is an affine and not an $\mathbb{A}^1$-uniruled surface. Then $\pi_0^{\mathbb{A}^1}(X)$ is $\mathbb{A}^1$-invariant.
\end{corollary}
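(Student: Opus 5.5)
The proof is a formal consequence of Corollary \ref{termination} combined with the comparison results from \cite{bhs} recalled in the remark after the definition of $\sL(\sF)$. By Corollary \ref{termination}, for a smooth affine, non $\A^1$-uniruled surface $X$ the canonical epimorphism $\sS(X)\to\sL(X)$ is an isomorphism. We know the following facts.
\begin{enumerate}
\item The canonical epimorphism $X\to\pi_0^{\A^1}(X)$ factors through an epimorphism $\sS(X)\to\pi_0^{\A^1}(X)$.
\item There is a canonical epimorphism $\pi_0^{\A^1}(X)\to\sL(X)$, obtained from the universal property of $\pi_0^{\A^1}$ applied to the $\A^1$-invariant sheaf $\sL(X)$.
\item The composite $X\to\sS(X)\to\pi_0^{\A^1}(X)\to\sL(X)$ agrees with the canonical map $X\to\sL(X)$, by uniqueness in the universal property.
\end{enumerate}

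First I would check that the composite $\sS(X)\to\pi_0^{\A^1}(X)\to\sL(X)$ equals the canonical map $\sS(X)\to\sL(X)$. Both maps precompose with the epimorphism $X\to\sS(X)$ to give the canonical $X\to\sL(X)$, so they coincide.

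Since the composite is an isomorphism, the first factor $\sS(X)\to\pi_0^{\A^1}(X)$ is a monomorphism. It is also an epimorphism, so it is an isomorphism of Nisnevich sheaves. Consequently the second factor $\pi_0^{\A^1}(X)\to\sL(X)$ is an isomorphism as well.

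Because $\sL(X)$ is $\A^1$-invariant by \cite[Theorem 2.13]{bhs}, it follows that $\pi_0^{\A^1}(X)$ is $\A^1$-invariant. Equivalently, one can cite \cite[Corollary 2.18]{bhs}, which says $\pi_0^{\A^1}(X)$ is $\A^1$-invariant if and only if $\pi_0^{\A^1}(X)\to\sL(X)$ is an isomorphism.

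The only point needing care is the compatibility of the canonical maps, that is, checking that the triangle through $\pi_0^{\A^1}(X)$ commutes. This follows from the uniqueness clause in the universal property together with the fact that $X\to\sS(X)$ is an epimorphism. No geometric input is required beyond Theorem \ref{non-uniruled}.
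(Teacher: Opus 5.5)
Your proposal is correct and follows essentially the same argument as the paper. The paper also notes that $\mathcal{L}(X)\cong\mathcal{S}(X)\to\pi_0^{\mathbb{A}^1}(X)\to\mathcal{L}(X)$ is the identity by universality, so the epimorphism onto $\pi_0^{\mathbb{A}^1}(X)$ is also a monomorphism and hence an isomorphism. Your check that this triangle commutes, using that $X\to\mathcal{S}(X)$ is an epimorphism, makes explicit a step the paper only asserts.
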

\begin{proof}
The canonical epimorphism $\mathcal{S}(X) \to \pi_0^{\mathbb{A}^1}(X)$ gives an epimorphism $\Phi: \mathcal{L}(X) \to \pi_0^{\mathbb{A}^1}(X)$, by Corollary \ref{termination}. There is a canonical epimorphism $\Psi: \pi_0^{\mathbb{A}^1}(X) \to \mathcal{L}(X)$. Since $\mathcal{L}(X)$ is the universal $\mathbb{A}^1$-invariant sheaf, so $\Psi \circ \Phi$ is identity. Hence $\Phi$ is an isomorpism.
Therefore,
$$\pi_0^{\mathbb{A}^1}(X) \cong \mathcal{L}(X) \cong \mathcal{S}(X)$$
and consequently, $\pi_0^{\mathbb{A}^1}(X)$ is $\mathbb{A}^1$-invariant.
\end{proof}
\begin{corollary}
Let $X \in Sm/k$ be an affine surface, where $k$ is an algebraically closed field of characteristic zero. Suppose that $X$ has non-negative logarithmic Kodaira dimension. Then $\pi_0^{\mathbb{A}^1}(X)$ is $\mathbb{A}^1$-invariant. For example, if $X$ is the Ramanujan surface \cite[Section 3]{ra} or $X$ is a tom Dieck-Petrie surface, then $\pi_0^{\mathbb{A}^1}(X)$ is $\mathbb{A}^1$-invariant.
\end{corollary}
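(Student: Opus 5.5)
The plan is to reduce the statement to Corollary \ref{invariant non-uniruled} by way of the equivalences in Theorem \ref{all uniruled equivalences}. Let $X \in Sm/k$ be an affine surface with $\bar{k}(X) \geq 0$. By Theorem \ref{all uniruled equivalences}, the conditions ``$X$ is $\mathbb{A}^1$-uniruled'' and ``$\bar{k}(X) < 0$'' are equivalent for smooth affine surfaces over an algebraically closed field of characteristic zero. So $X$ is not $\mathbb{A}^1$-uniruled, and Corollary \ref{invariant non-uniruled} applies directly. It gives $\pi_0^{\mathbb{A}^1}(X) \cong \mathcal{L}(X) \cong \mathcal{S}(X)$, and in particular $\pi_0^{\mathbb{A}^1}(X)$ is $\mathbb{A}^1$-invariant. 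The general statement needs nothing more.

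For the examples, I would check that each surface is smooth, affine, and of non-negative logarithmic Kodaira dimension. Ramanujam's surface is a smooth contractible affine surface of log general type, so $\bar{k} = 2$ (\cite[Section 3]{ra}). The tom Dieck--Petrie surfaces are smooth affine homology planes of log general type, so again $\bar{k} = 2 \geq 0$. I would cite the original source, or the survey \cite{miyanishicrm}, for this fact. Alternatively, one can argue directly that neither surface admits an $\mathbb{A}^1$-fibration: a contractible surface with an $\mathbb{A}^1$-fibration would have to be $\mathbb{A}^2$ (Miyanishi--Sugie and Fujita). That would be a contradiction, since these surfaces have nontrivial fundamental group at infinity and so are not isomorphic to $\mathbb{A}^2$. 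Item (4) of Theorem \ref{all uniruled equivalences} then gives $\bar{k} \geq 0$.

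The only step that is not formal is confirming the log Kodaira dimension of the examples. I would handle it by citation, since recomputing the boundary divisor of a smooth completion would be a detour unrelated to the $\mathbb{A}^1$-homotopy arguments of this section.
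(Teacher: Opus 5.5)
Your proof is correct and follows the paper's route. The paper likewise notes that an $\mathbb{A}^1$-uniruled smooth affine surface has negative logarithmic Kodaira dimension, citing \cite[Lemma 1.8]{russell}, which is exactly the direction of Theorem \ref{all uniruled equivalences} you use; it then applies Corollary \ref{invariant non-uniruled}. The paper does not justify its examples, so your verification that the Ramanujam and tom Dieck--Petrie surfaces have non-negative logarithmic Kodaira dimension fills in a step it leaves implicit.
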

\begin{proof}
By \cite[Lemma 1.8]{russell} an $\mathbb{A}^1$-uniruled affine surface has negative logarithmic Kodaira dimension. So the corollary follows from Corollary \ref{invariant non-uniruled}.
\end{proof}
\begin{remark}
Corollary \ref{invariant non-uniruled} gives an affirmative answer to \cite[Conjecture 2.2.8]{am}, in case of affine, non $\A^1$-uniruled, smooth surfaces $X$. However, in general the Conjecture is known to be false \cite[Section 4]{bhs}. Also, note that if $Sing_*(X)$ is $\mathbb{A}^1$-local, then the epimorphism 
$$\mathcal{S}(X) \to \pi_0^{\mathbb{A}^1}(X)$$
is an isomorphism \cite[Remark 2.2.9]{am}. Thus it is natural to ask for an affine non $\A^1$-uniruled, smooth surface $X$, whether $Sing_*(X)$ is $\mathbb{A}^1$-local.
\end{remark}
	
\subsection{$\A^1$-invariance of $\pi_0^{\A^1}(X)$, for $\A^1$-uniruled, affine surface $X$:}
\

Let $X$ be a smooth affine surface over an algebraically closed field $k$ of characteristic zero. In this subsection, we will prove that if $X$ is an $\A^1$-uniruled surface (Definition \ref{ruled-defn}), then $\pi_0^{\A^1}(X)$ is $\A^1$-invariant (Theorem \ref{connected-base-new}). Recall that, $X$ is $\A^1$-uniruled if and only if $X$ admits an $\A^1$-fibration $\pi:X \to C$, onto a smooth curve $C$ (Theorem \ref{all uniruled equivalences}) and the $\A^1$-fibration $\pi$ factors as $\pi= \alpha \circ \theta$ (Theorem \ref{factor-A1-fibration}), where $\theta: X \to \sC$ is an \'etale locally trivial $\A^1$-bundle over an algebraic space $\sC$ and $\alpha: \sC \to C$ is the structure morphism which is surjective, quasi-finite and birational. We have proved that the morphism $\theta$ is an $\A^1$-weak equivalence (Corollary \ref{factorisation A1 equivalence}). The $\A^1$-fibration $\pi: X \to C$ can be divided into two following classes -

\begin{enumerate}
	\item $C$ is an $\A^1$-rigid curve
	\item $C$ is not $\A^1$-rigid, hence $\A^1$-connected curve.
\end{enumerate} 
 We will prove the $\A^1$-invariance of $\pi_0^{\A^1}(X)$ by proving $\pi_0^{\A^1}(\sC)$ is $\A^1$-invariant in each case.

First we consider the case of $\A^1$-fibration $\pi: X\to C$ onto an $\A^1$-rigid smooth curve $C$. The following theorem says that if the base curve $C$ is $\mathbb{A}^1$-rigid, then any morphism $\mathbb{A}^1_k \to \sC$ is constant, which means the morphism factors through $\Spec k$. 

		\begin{theorem}\label{rigid-base}
			Let $X$ be a smooth affine surface, which admits an $\mathbb{A}^1$-fibration $\pi: X \to C$ onto an  
$\A^1$-rigid, smooth curve $C$ and
$\sC$ be the algebraic space 
appearing in the factorisation of $\pi$, by Theorem \ref{factor-A1-fibration}. Then any morphism $H: \A^1_k \to \sC$, factors through $\Spec k$.
		\end{theorem}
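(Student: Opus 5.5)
The plan is to compose $H$ with the structure morphism $\alpha: \sC \to C$ of Theorem \ref{factor-A1-fibration}, use rigidity of $C$, and then show that $H$ itself must be constant because $\alpha$ is quasi-finite.

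\textbf{Step 1: the composite is constant.} Consider $\alpha \circ H: \A^1_k \to C$. Since $C$ is $\A^1$-rigid (Definition \ref{invariant definition}), the map $C(\Spec k) \to C(\A^1_k)$ is a bijection, so every morphism $\A^1_k \to C$ factors through $\Spec k$. Hence $\alpha \circ H$ is constant, with value some closed point $c \in C(k)$.

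\textbf{Step 2: lift to a fibre product.} Therefore $H$ factors through the fibre $\sC_c := \sC \times_C \Spec\kappa(c)$. Since $\alpha$ is quasi-finite and of finite type, $\sC_c$ is a quasi-finite algebraic space over $k$. By Construction \ref{construction of algebraic space}(3), its points correspond to the finitely many connected components of $\pi^{-1}(c)$.

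\textbf{Step 3: a quasi-finite space receives only constant maps from $\A^1_k$.} A quasi-finite, finite type algebraic space over a field is a scheme, namely a finite disjoint union of spectra of Artinian local $k$-algebras. This is where I expect the main technical care to be needed: one has to cite or argue that quasi-finite separated algebraic spaces over a field are schemes, or else avoid separatedness, since $\sC$ is not separated. The fibre itself is fine, because a quasi-finite algebraic space over a field is always zero-dimensional and hence a scheme, by the Stacks Project.

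Given this, $H: \A^1_k \to \sC_c$ has connected source, so it lands in a single $\Spec A$ with $A$ Artinian local and residue field $k$. The morphism corresponds to a $k$-algebra map $A \to k[t]$. Since $k[t]$ is reduced, this map kills the nilradical, and so it factors through $A_{\mathrm{red}} = k$. Thus $H$ factors through $\Spec k \to \Spec A \to \sC_c \to \sC$.

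\textbf{Alternative for Step 3.} If the structural facts about algebraic spaces are inconvenient, I would instead pull back along an étale atlas $U \to \sC$ and use that $H$ is determined étale-locally. Then $\A^1_k \times_\sC U \to \A^1_k$ is étale, and its image in $U$ lies in the finite fibre over $c$. Each connected component of $\A^1_k \times_\sC U$ therefore maps to a single point of $U$. Descending, $H$ is constant on underlying topological spaces, with reduced image, and hence factors through $\Spec k$ because $\A^1_k$ is reduced.
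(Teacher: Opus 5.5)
Your proof is correct and follows essentially the same route as the paper. Both compose with $\alpha$ and use $\A^1$-rigidity of $C$ to force $H$ into the fibre over a $k$-point, then use quasi-finiteness of $\alpha$ together with connectedness and reducedness of $\A^1_k$ to kill any possible nilpotent structure of that fibre. Your version, which identifies the scheme-theoretic fibre $\sC_c$ as a finite disjoint union of spectra of Artinian local $k$-algebras, is cleaner than the paper's: it avoids the paper's case split into ``subscheme'' versus ``non-reduced point'' and its claim that $H$ factors through a chosen \'etale chart.
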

		
		\begin{proof}
			Suppose, the $\A^1$-fibration $\pi: X \to C$ factorises as (Theorem \ref{factor-A1-fibration})
$$X \xrightarrow{\theta} \mathcal{C}\xrightarrow{\alpha} C,$$ 
where $\sC$ is an algebraic space along with the morphism $\alpha$, which is surjective, quasi-finite, birational morphism to $C$ and $\theta$ is an étale $\A^1$-bundle. 

Suppose, $H: \A^1_k \rightarrow \sC$ is a morphism. Since $C$ is $\mathbb{A}^1$-rigid, so the morphism $\alpha \circ H: \mathbb{A}^1_k \to C$  
factors as
$$\A^1_k \xrightarrow{\pr} \Spec\ k  \xrightarrow{x} C,$$
we have the following commutative diagram 

\[
\xymatrix{
\A^1_k \ar[r]^H \ar[d]^{\pr}& \sC \ar[d]^\alpha\\
\Spec\ k \ar[r]^x & C
}\]

              Suppose, $x: \Spec\ k \to C$ maps to a $k$-point of $C$ (as it cannot map to the generic point of $C$). Since $\alpha$ is quasi-finite, so the fiber $\alpha^{-1}(x)$ consists only finitely many $k$-points. If Im$(H)\subset \sC$ is a subscheme, then $H$ is constant, since the topological space $\overline{\text{Im}(H)}$ is irreducible and connected . Otherwise Im$(H)$ must maps to a non-reduced point $\mathfrak{c}$ of $\sC$ (a point of $\sC$ over which the fiber is non-reduced) with $\kappa(\mathfrak{c})=k[t]/(t^m)$ for some $m>1$ . Then we have an étale covering $\phi: U \to \sC$ which have an unique $k$-point $y\mapsto \mathfrak{c}$ and $H$ factors through $U$. Since the only $k$-algebra map between $\kappa(\mathfrak{c})=k[t/(t^m)]\to \kappa(y)=k$ is $t\mapsto 0$ and $H$ factors through $U$, it must be constant and factors through $\Spec k$. 

		\end{proof}

Before proving the $\A^1$-invariance of $\pi_0^{\A^1}(X)$,
we need the
 this following technical result, which says that the algebraic space $\sC$ in Theorem \ref{factor-A1-fibration} is $\A^1$-invariant if and only if any morphism $\mathbb{A}^1_k \to \sC$ is constant. 

\begin{lemma}\label{not-a1-rigid}
	Let $X$ be a smooth affine surface, which admits an $\mathbb{A}^1$-fibration $\pi: X \to C$ onto a  
	smooth curve $C$ and
	$\sC$ is the algebraic space 
	in Theorem \ref{factor-A1-fibration}. If $\sC$ is not $\A^1$-invariant, then there exists a non-constant morphism $\gamma:\A^1_k \to \sC$.
\end{lemma}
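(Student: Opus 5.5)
We argue by contraposition: assuming every morphism $\gamma:\A^1_k\to\sC$ is constant (i.e. factors through $\Spec k$), we show that $\sC$ is $\A^1$-invariant as a Nisnevich sheaf. By Remark \ref{mazza}, it suffices to show that for every $U\in Sm/k$ and every $H\in \sC(\A^1_U)$, i.e. every morphism $H:\A^1_U\to\sC$, we have $H\circ i_0 = H\circ i_1$. We may assume $U$ is irreducible, since $\sC(\A^1_U)$ decomposes over the connected components of $U$.

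First, we show that $H(0)$ and $H(1)$ agree on $k$-points. For each $u\in U(k)$, the restriction $H|_{\A^1_k\times\{u\}}:\A^1_k\to\sC$ is constant by hypothesis, so $H(0)(u)=H(1)(u)$ as points of $|\sC|$. Next, we upgrade this to equality of morphisms. Since $\sC$ is a smooth algebraic space of dimension one over an algebraically closed field, it is locally separated. Indeed, by Construction \ref{construction of algebraic space} it is obtained from the non-separated curve $\check{C}$ by étale quotients, and its diagonal is an immersion. Hence the locus where two morphisms $a,b:U\to\sC$ agree, namely the pullback of the diagonal along $(a,b)$, is a locally closed subscheme of $U$.

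The key step is to show that this equalizer is all of $U$. We argue via the composite $\alpha\circ H:\A^1_U\to C$, where $C$ is a separated curve. The two compositions $\alpha\circ H(0)$ and $\alpha\circ H(1)$ agree on all $k$-points of the reduced scheme $U$, hence they are equal, since $C$ is separated and $U$ is reduced with dense $k$-points. Thus $H(0),H(1):U\to\sC$ have the same composite to $C$. Since $\alpha$ is an isomorphism over the dense open $C^\circ\subset C$ that avoids the finitely many bad points, the two maps agree on the preimage of $C^\circ$. If $\alpha\circ H(0)$ is non-constant, that preimage is dense in $U$. In that case we conclude using the following two facts: agreement of morphisms to $\sC$ on $k$-points implies agreement set-theoretically, and an étale-local chart of $\sC$ near a bad point is a smooth curve mapping étale to $\sC$. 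Lifting both maps étale-locally on $U$ to such a chart, they agree on a dense open set and on all $k$-points. The only failure is the non-separated branching, which is excluded because the $k$-points match. This gives equality on an étale cover of $U$, hence $H(0)=H(1)$. If instead $\alpha\circ H(0)$ is constant with value $c$, then both maps land in the finite fibre $\alpha^{-1}(c)$, and $U$ is irreducible and reduced. As in the proof of Theorem \ref{rigid-base}, maps from a reduced scheme to a point of $\sC$, possibly with non-reduced residue structure, factor through a single $k$-point. Both maps therefore equal the constant map at the common $k$-point.

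The main obstacle is the separatedness issue. Morphisms from reduced schemes to non-separated spaces that agree on a dense open set need not agree. The argument must use that agreement on every $k$-point rules out switching between doubled points, and it must treat carefully the stacky non-reduced points coming from multiple fibres via étale charts. I expect to do this étale-locally on $\sC$, using the explicit charts in \cite{doub}.
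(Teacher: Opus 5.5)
Your reduction, the case where $\alpha\circ H(0)$ is constant, and the case where all fibres of $\pi$ are reduced are fine. In that last case $\sC=\check C$ is a scheme, and agreement at all $k$-points of a reduced $U$ does force equality.

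The gap is the upgrade step in the remaining case. It rests on two claims, and both fail exactly at the points of $\sC$ coming from multiple fibres.

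First, $\sC$ need not be locally separated. For $x^2z=y^2-x$ (Example \ref{d-sphere}(3)), $\sC$ is the quotient of the line with doubled origin by the free involution $t\mapsto -t$ exchanging the origins. Take the chart $a\colon\A^1_w\to\sC$ given by one copy of $\A^1$. Then $\A^1_w\times_{\sC}\A^1_w=\Delta\sqcup\Gamma$ with $\Gamma=\{(w,-w): w\neq 0\}$. This is the standard example of an algebraic space that is not locally separated. Note that if $\sC$ were locally separated, your equalizer would be a locally closed subscheme of the reduced Jacobson scheme $U$ containing every closed point, hence all of $U$, and nothing further would be needed. The whole content of the lemma lies in this failure.

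Second, agreement on a dense open set and at all $k$-points does not imply equality. Put $b:=a\circ(w\mapsto -w)$. Then $a$ and $b$ agree on $w\neq 0$ and at every $k$-point, yet $a\neq b$. Indeed, $w\mapsto (w,-w)$ does not factor through $\Delta\sqcup\Gamma$: the connected line would have to land in one component, and neither contains its image. The lifts $w$ and $-w$ are different branches through the same point, so ``the $k$-points match'' excludes nothing.

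The same local picture occurs at the points of $\sC$ over the multiplicity-two fibres $x=0,1$ in Example \ref{rigid example}. There your contrapositive hypothesis holds: every $\A^1_k\to\sC$ is constant. So an argument using only pointwise and generic coincidence of $H(0)$ and $H(1)$ would prove a false statement there.

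The missing idea is that you must use $H$ along the whole line $\A^1\times\{u_0\}$ over a multiple point, not just its endpoints. One way to do this is as follows. Lift $H$ and $H(0)\circ\mathrm{pr}$ étale-locally to a chart. At each $(t,u_0)$ the two lifts differ by a deck transformation. This transformation is locally constant in $t$. It is trivial at $t=0$, because $H$ and $H(0)\circ\mathrm{pr}$ agree along $\{0\}\times U$ and $H(0)$ is non-constant near $u_0$. Connectedness of $\A^1$ then makes it trivial at $t=1$. Deferring this to Dubouloz's explicit charts leaves the only nontrivial case unproved.

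The paper avoids comparing morphisms into $\sC$ pointwise:
\begin{enumerate}
\item It Zariski-localizes $U$ so that $\theta^*H$ is a trivial $\A^1$-bundle.
\item It lifts $H$ through a section to $\widetilde H\colon\A^1_U\to X$, and notes $\widetilde H(0)\neq\widetilde H(1)$ in $X(U)$.
\item Since $X$ is a separated scheme, this inequality is detected at some $x\in U(k)$, which gives a non-constant line $\A^1\times\{x\}\to X$.
\item It then pushes this line to $\sC$, using a dimension count on $\overline{\widetilde H(\A^1_U)}$ to argue that the line is not contained in a fibre of $\theta$.
\end{enumerate}
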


\begin{proof}
The $\A^1$-fibration $\pi: X \to C$
from a smooth affine surface $X$ to a smooth curve $C$ factors through an \'etale locally trivial $\A^1$-bundle $\theta: X \to \sC$
over a smooth algebraic space $\sC$ (Theorem \ref{factor-A1-fibration}). Assume that $\sC$ is not $\A^1$-invariant.
Then by \cite[Lemma 2.16]{mazza}, there is $H\in \sC(\A^1_U)$ for some $U \in Sm/k$ such that 
$H(0) \neq H(1) \in \sC(U)$ (Notations \ref{notations}). Since $\sC$ is an \'etale sheaf on $Sm/k$, we can moreover assume $U$ to be affine.
The pullback $\theta^*H: \A^1_U\times_\sC X \to \A^1_U$ is an \'etale $\A^1$-bundle over $\A^1_U$.
An étale $\A^1$-bundle over $\A^1_U$ corresponds to a cohomology class in $H^1_{\text{\'et}}(\A^1_U,\Aut(\A^1_k))$ and 
$$H^1_{\text{\'et}}(\A^1_U,\Aut(\A^1_k)) \cong H^1_{\text{Zar}}(\A^1_U,\Aut(\A^1_k)) \cong H^1_{\text{Zar}}(U,\Aut(\A^1_k)),$$ 
so we can choose a Zariski affine trivialisation of $\{U_i\}_i$ of $U$ such that $\theta^*H|_{\A^1_{U_i}}$ is trivial, for every $i$. Thus there is some $i,\ H|_{\A^1_{U_i}}(0) \neq H|_{\A^1_{U_i}}(1) \in \sC(U_i)$. Therefore, 
we can assume that there is $H \in \sC(\A^1_U)$, where $U \in Sm/k$ is irreducible, affine and $H$ satisfies $H(0) \neq H(1) \in \sC(U)$ and also the pullback $\theta^*H: \A^1_U\times_\sC X \to \A^1_U$ is a trivial $\A^1$-bundle.
Consider the following diagram:	
  
  \begin{equation}\label{main-lemma-diagram}
  	\xymatrix{
  		H^{-1}(V)\times_\sC X \ar@{^{(}->}[r] \ar[d]^{\theta^*H}&\A^1_U\times_\sC X \ar[r]^-{H'} \ar[d]_{\theta^*H}& X \ar[d]^\theta & V\times_\sC X \ar@{_{(}->}[l] \ar[d]_\theta\\
  		H^{-1}(V) \ar@{^{(}->}[r] \ar@/^/[u]^{s_0}&\A^1_U \ar@/_/[u]_{s_0} \ar[r]^H&\sC & V \ar@{_{(}->}[l] \ar@/_/[u]_{s_0}
  	}
  \end{equation}
  
Since the $\A^1$-fibration $\pi$ is generically trivial, there is an open subspace $V\subset\sC$
, which is also a scheme,
over which the bundle is trivial, that is $\theta: V \times_\sC X \to V$ is a trivial $\A^1$-bundle. We denote all the zero sections of corresponding bundles by $s_0$. So the following diagram is a pullback diagram

  \begin{equation}\label{main-lemma-diagram-pulback}
  \xymatrixcolsep{7pc}
  \xymatrix{
  H^{-1}(V)\times_\sC X \cong H^{-1}(V)\times \A^1_k \ar[d]^{\theta^*H} \ar[r]^-{(H,id)}& V\times \A^1_k \cong V\times_\sC X \ar[d]_{\theta} \\
  H^{-1}(V)\ar[r]^H \ar@/^/[u]^{s_0} & V \ar@/_/[u]_{s_0}}
  \end{equation}

Therefore by commutativity of diagram (\ref{main-lemma-diagram}) we have 
$$(H^\prime \circ s_0)(H^{-1}(V)) = s_0(H(H^{-1}(V))).$$
Since, $H$ is a non-constant homotopy (that is $H(0) \neq H(1) \in \sC(U)$) and the algebraic space $\sC$ is of dimension $1$, so the restriction
$$H: H^{-1}(V) \to V$$
is dominant. So,
$$\overline{(H'\circ s_0)(H^{-1}(V))}= \overline{s_0(V)} \subseteq V\times_\sC X$$

 Thus, 
dim$\overline{(H'\circ s_0)(H^{-1}(V))}= \dim\overline{(s_0(V))}=1$, as closures in $V\times_\sC X$.

 Let $\widetilde{H}$ be the composition of the morphisms
$$ \A^1_U\xrightarrow{s_0}\A^1_U\times_\sC X \xrightarrow{H'} X.$$
Then, $\widetilde{H}$ is a lift of $H$, that is $\theta \circ \widetilde{H} =H$.

 \begin{figure}
	
	\includegraphics[width=0.5\textwidth]{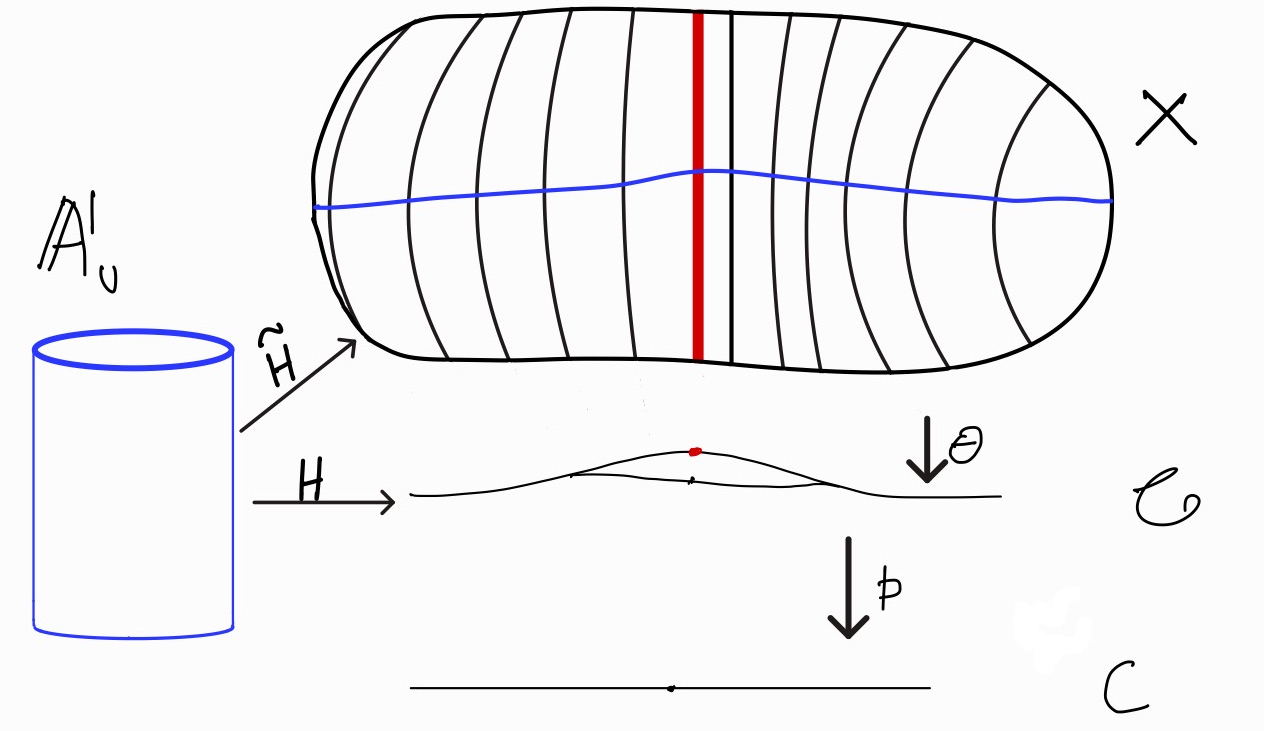}
	\caption{Non-constant map from $\A^1_k$}
\end{figure}
\graphicspath{{./Surface-2.jpg}}
Since, $\overline{(H'\circ s_0)(H^{-1}(V))}$ as closure in $V\times_\sC X$ is of dimension $1$ and $V\times_\sC X$, $H^{-1}(V)$ are open subschemes of $X$, $\A^1_U$
respectively, so
$$\dim\overline{\widetilde{H}(\A^1_U)}= 1, \text{ as closure in } X.$$ 
Since the sections $H(0) \neq H(1) \in \sC(U)$ and $\widetilde{H}$ is a lift of $H$,
so, 
$$\widetilde{H}(0) \neq \widetilde{H}(1) \in X(U).$$

Thus there exists $x \in U(k)$ such that $\widetilde{H}(0)(x)\neq \widetilde{H}(1)(x)$. Now define $\widetilde{\gamma}: \A^1_k \to X$ as
$$\widetilde{\gamma}: \A^1_k \times \Spec(k) \xrightarrow{(id\times x)} \A^1_k \times U \xrightarrow{\widetilde{H}} X.$$


 Since $X$ is affine, so the image $\widetilde{\gamma}(\A^1_k)$ is closed in $X$ and since $\widetilde{\gamma}(0) \neq \widetilde{\gamma}(1)$, so $\dim \widetilde{\gamma}(\A^1_k) = 1$. Now $\widetilde{\gamma}(\mathbb{A}^1_k) \subseteq \widetilde{H}(\mathbb{A}^1_U)$ and since, $\dim\overline{\widetilde{H}(\A^1_U)}= 1$, so
 $$\overline{\widetilde{H}(\A^1_U)} =   \widetilde{\gamma}(\mathbb{A}^1_k) = \widetilde{H}(\mathbb{A}^1_U).$$
 Since, $H(0) \neq H(1) \in \sC(U)$, so the image $\widetilde{H}(\mathbb{A}^1_U)$ 
 can not be contained in a single fiber over $\theta$.
Therefore, $\gamma:=\theta \circ \widetilde{\gamma}:\A^1_k \to \sC$ is a non-constant morphism.
 
\end{proof}

Therefore, by Theorem \ref{rigid-base} and Lemma \ref{not-a1-rigid}, the algebraic space $\sC$ is also $\A^1$-invariant in this case.
\begin{corollary} \label{a1 invariance of algebraic space}
Let $X$ be a smooth affine surface, which admits an $\mathbb{A}^1$-fibration $\pi: X \to C$ onto an  
$\A^1$-rigid, smooth curve $C$ and
$\sC$ be the algebraic space 
in Theorem \ref{factor-A1-fibration}. Then $\sC$ is also $\A^1$-invariant.
\end{corollary}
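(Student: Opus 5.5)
The plan is to argue by contradiction, combining Lemma \ref{not-a1-rigid} with Theorem \ref{rigid-base}. Suppose $\sC$ is not $\A^1$-invariant, viewed as the representable (étale, hence Nisnevich) sheaf on $Sm/k$.

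By Lemma \ref{not-a1-rigid}, there is a non-constant morphism $\gamma: \A^1_k \to \sC$. Non-constant here means that $\gamma$ does not factor through $\Spec k$: the lemma produces $\gamma = \theta\circ\widetilde{\gamma}$ with $\widetilde{\gamma}(\A^1_k)$ not contained in a single fiber of $\theta$, so $\gamma$ hits at least two distinct points of $\sC$. The hypotheses of Theorem \ref{rigid-base} are exactly the present ones: $X$ is a smooth affine surface, $\pi: X\to C$ is an $\A^1$-fibration onto an $\A^1$-rigid smooth curve, and $\sC$ comes from Theorem \ref{factor-A1-fibration}. That theorem says every morphism $\A^1_k\to\sC$ factors through $\Spec k$. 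This contradicts the existence of $\gamma$, so $\sC$ must be $\A^1$-invariant.

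The one point I would check is that the two notions of ``constant'' match. Lemma \ref{not-a1-rigid} yields a $\gamma$ whose set-theoretic image has at least two points, because $\widetilde{\gamma}(0)$ and $\widetilde{\gamma}(1)$ lie over different points of $\sC$. A morphism factoring through $\Spec k$ has a one-point image. The two notions are therefore incompatible, and the contradiction is genuine; no further work beyond citing the two results is needed. If desired, one can add via Remark \ref{mazza} that $\A^1$-invariance means $i_0^* = i_1^*$ on $\sC(\A^1_U)$. This is precisely the condition whose failure is the input to Lemma \ref{not-a1-rigid}.
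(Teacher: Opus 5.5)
Your proposal is correct and matches the paper's argument exactly: the corollary is the contrapositive combination of Lemma~\ref{not-a1-rigid} (if $\sC$ is not $\A^1$-invariant, there is a non-constant $\gamma:\A^1_k\to\sC$) and Theorem~\ref{rigid-base} (every morphism $\A^1_k\to\sC$ factors through $\Spec k$). One small correction to your consistency check: the lemma's proof shows that $\widetilde{\gamma}(\A^1_k)$ is not contained in a single fiber of $\theta$, not that $\widetilde{\gamma}(0)$ and $\widetilde{\gamma}(1)$ lie over distinct points of $\sC$; a set-theoretic image with at least two points is all your argument needs, so the conclusion is unaffected.
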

The $\A^1$-invariant sheaves are fibrant objects in the $\A^1$-model structure. So, if $\sC$ is $\A^1$-invariant, then $\pi_0^{\A^1}(\sC) \cong \sC$. Thus by Corollary \ref{factorisation A1 equivalence} we have the following:
\begin{corollary} \label{rigid case a1 invariant}
Let $X$ be a smooth affine surface, which admits an $\mathbb{A}^1$-fibration $\pi: X \to C$ onto an  
$\A^1$-rigid, smooth curve $C$. Then $\pi_0^{\A^1}(X) \cong \sC$, where $\sC$ is the algebraic space in Theorem \ref{factor-A1-fibration}. Thus, in particular $\pi_0^{\A^1}(X)$ is $\A^1$-invariant.
\end{corollary}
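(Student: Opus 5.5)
The plan is to combine three earlier results: the $\A^1$-weak equivalence $\theta: X \to \sC$ of Corollary \ref{factorisation A1 equivalence}, the $\A^1$-invariance of $\sC$ from Corollary \ref{a1 invariance of algebraic space}, and the fact that $\A^1$-invariant Nisnevich sheaves are $\A^1$-fibrant.

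First, since $\theta: X \to \sC$ is an $\A^1$-weak equivalence, it induces an isomorphism of presheaves
$$Hom_{\sH(k)}(U, X) \xrightarrow{\cong} Hom_{\sH(k)}(U, \sC)$$
for every $U \in Sm/k$. Sheafifying gives an isomorphism $\pi_0^{\A^1}(X) \cong \pi_0^{\A^1}(\sC)$.

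It therefore suffices to show that $\pi_0^{\A^1}(\sC) \cong \sC$. Here I would argue as follows.

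\begin{enumerate}
\item By Corollary \ref{a1 invariance of algebraic space}, $\sC$ is $\A^1$-invariant. This is where the $\A^1$-rigidity of $C$ enters: Theorem \ref{rigid-base} says every morphism $\A^1_k \to \sC$ is constant, and Lemma \ref{not-a1-rigid} then gives $\A^1$-invariance.
\item $\sC$ is an étale sheaf, hence a Nisnevich sheaf of sets. Viewed as a simplicially constant space, it is simplicially fibrant: a Nisnevich sheaf of sets is local with respect to Čech hypercovers because it satisfies descent.
\item Being $\A^1$-invariant, $\sC$ is also $\A^1$-local. Indeed, for any space $\sY$,
$$Hom_{\sH_s(k)}(\sY, \sC) = Hom(\pi_0(\sY)^{a}, \sC),$$
where $\pi_0(\sY)^{a}$ denotes the sheafified $\pi_0$, and $\A^1$-invariance makes this unchanged under replacing $\sY$ by $\sY \times \A^1$. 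So $\sC$ is $\A^1$-fibrant.
\item For $\A^1$-fibrant $\sC$, we get $Hom_{\sH(k)}(U, \sC) = Hom_{\sH_s(k)}(U, \sC) = \sC(U)$.
\end{enumerate}

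So the presheaf defining $\pi_0^{\A^1}(\sC)$ is already the sheaf $\sC$, and $\pi_0^{\A^1}(X) \cong \sC$. The $\A^1$-invariance of $\pi_0^{\A^1}(X)$ then follows immediately from the $\A^1$-invariance of $\sC$.

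The main obstacle is the fibrancy step: one has to make sure that the algebraic space $\sC$, viewed on $Sm/k$, is a Nisnevich sheaf whose discrete space satisfies descent for hypercovers, not just for Čech covers. For a sheaf of sets this is standard, because higher descent is vacuous for $0$-truncated objects, so I expect only a brief justification to be needed. Everything else is formal once the earlier corollaries are invoked.
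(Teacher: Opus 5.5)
Your proposal is correct and follows the same route as the paper. The paper combines three facts: $\theta: X \to \mathcal{C}$ is an $\mathbb{A}^1$-weak equivalence (Corollary \ref{factorisation A1 equivalence}), $\mathcal{C}$ is $\mathbb{A}^1$-invariant (Corollary \ref{a1 invariance of algebraic space}), and an $\mathbb{A}^1$-invariant Nisnevich sheaf is $\mathbb{A}^1$-fibrant, so $\pi_0^{\mathbb{A}^1}(\mathcal{C}) \cong \mathcal{C}$. The only difference is that you justify the last fact, via descent for a discrete sheaf and $\mathbb{A}^1$-locality through $Hom_{\mathcal{H}_s(k)}(\mathcal{Y}, \mathcal{C}) = Hom(a\pi_0(\mathcal{Y}), \mathcal{C})$, whereas the paper simply asserts it.
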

Since two $\A^1$-invariant sheaves are isomorphic if and only if they are $\A^1$-weakly equivalent, so by
Corollary \ref{factorisation A1 equivalence} and Corollary \ref{a1 invariance of algebraic space}, we can describe the $\A^1$-homotopy types of $X$, if $C$ is an $\A^1$-rigid curve.
\begin{corollary} \label{homotopy types base rigid}
Suppose, $X_i$'s are two $\A^1$-uniruled, smooth, affine surfaces that admit $\A^1$-fibrations $\pi_i: X_i \to C_i$
onto $\A^1$-rigid smooth curves $C_i$'s respectively, for $i=1,2$. Suppose that $\pi_i$ factors through $\theta_i: X \to \sC_i$, over smooth algebraic spaces $\sC_i$ respectively (as appeared in Theorem \ref{factor-A1-fibration}), for $i=1,2$.
Then $X_1$ and $X_2$ are $\A^1$-weakly equivalent if and only if $\sC_1$ and $\sC_2$ are isomorphic.
\end{corollary}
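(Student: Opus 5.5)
The plan is to reduce the statement to a comparison of $\A^1$-invariant sheaves, using the results established just above. By Corollary \ref{factorisation A1 equivalence}, each $\theta_i: X_i \to \sC_i$ is an $\A^1$-weak equivalence. So $X_1$ and $X_2$ are $\A^1$-weakly equivalent if and only if $\sC_1$ and $\sC_2$ are isomorphic in $\sH(k)$. By Corollary \ref{a1 invariance of algebraic space}, both $\sC_1$ and $\sC_2$ are $\A^1$-invariant Nisnevich sheaves of sets, viewed as simplicially constant spaces. It therefore suffices to show that two $\A^1$-invariant sheaves of sets are isomorphic in $\sH(k)$ exactly when they are isomorphic as sheaves.

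One direction is immediate: an isomorphism of sheaves $\sC_1 \cong \sC_2$ is an isomorphism of spaces. Composing it with the $\A^1$-weak equivalences $\theta_1,\theta_2$ gives an isomorphism $X_1 \cong X_2$ in $\sH(k)$.

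For the converse, I would apply $\pi_0^{\A^1}$, which is a functor on $\sH(k)$. An $\A^1$-weak equivalence $X_1 \simeq X_2$ then yields an isomorphism $\pi_0^{\A^1}(X_1) \cong \pi_0^{\A^1}(X_2)$. By Corollary \ref{rigid case a1 invariant}, $\pi_0^{\A^1}(X_i) \cong \sC_i$, so $\sC_1 \cong \sC_2$ as sheaves.

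The main point to justify is the identification $\pi_0^{\A^1}(\sC)\cong\sC$ for an $\A^1$-invariant sheaf of sets $\sC$. This is the claim, stated before Corollary \ref{rigid case a1 invariant}, that such sheaves are $\A^1$-fibrant. I would argue it as follows. A sheaf of sets, viewed as a discrete simplicial sheaf, is simplicially fibrant; this is the standard fact that a discrete sheaf satisfies Nisnevich descent trivially. $\A^1$-locality then follows from $\A^1$-invariance, since for a discrete fibrant object, maps in $\sH_s(k)$ from $\sY$ are computed by $Hom(\pi_0\sY,\sC)$. Given this, $Hom_{\sH(k)}(U,\sC)=\sC(U)$, and sheafifying gives $\pi_0^{\A^1}(\sC)=\sC$. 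Alternatively, I would use the universal property recalled in the Remark on $\sL(\sF)$: the identity of the $\A^1$-invariant sheaf $\sC$ factors through $\sC\to\pi_0^{\A^1}(\sC)$, and this map is an epimorphism, so it is an isomorphism.
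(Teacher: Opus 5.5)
Your proof is correct and follows essentially the same route as the paper. The paper also uses that each $\theta_i$ is an $\A^1$-weak equivalence (Corollary \ref{factorisation A1 equivalence}), that the $\sC_i$ are $\A^1$-invariant (Corollary \ref{a1 invariance of algebraic space}), and that two $\A^1$-invariant sheaves are isomorphic if and only if they are $\A^1$-weakly equivalent. The paper simply asserts this last fact, whereas you justify it via $\pi_0^{\A^1}$ and the $\A^1$-fibrancy of $\A^1$-invariant sheaves (equivalently, Corollary \ref{rigid case a1 invariant}), which fills in a detail rather than changing the argument.
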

\begin{remark}
More generally, the conclusion of Corollary \ref{homotopy types base rigid} is true, if we only assume that the algebraic spaces $\sC_1, \sC_2$ are $\A^1$-invariant. Note that by Corollary \ref{not-a1-rigid}, if the base curve $C$ of the $\A^1$-fibration $\pi:X \to C$ is $\A^1$-rigid, then the algebraic space $\sC$ is also $\A^1$-invariant. But there exists $X$ with an $\A^1$-fibration $X \to \A^1_k$ such that the algebraic space $\sC$ (by Theorem \ref{factor-A1-fibration}) is $\A^1$-invariant (Example \ref{rigid example}).
\end{remark}
		\begin{example}
			\begin{enumerate}
				\item Let $X$ be the surface
                $$X\equiv\{(x-1)^2z=y(y-1)+(x-1)\}\backslash \{z=y(y-1)-1\}.$$
                $X$ is smooth by the Jacobian criterion and let $\pi=\pr_x$ be the $\A^1$-fibration to $\G_m=\A^1\backslash\{0\}$, an $\A^1$-rigid curve. In this case, the algebraic space $\sC$ is actually a scheme and it is $\G_m$ with double $1$, let us denote it by $\tilde{\G}_m$.\par $\tilde{\G}_m$ is $\A^1$-rigid as for any morphism $f:\A^1\to\tilde{\G}_m$ the composition morphism $$\A^1\xrightarrow{f}\tilde{\G}_m\xrightarrow{q}\G_m$$gives a constant map, thus $f$ must be constant. Here $q$ sends both of the $1$ to same point $1$ in $\tilde{\G}_m$ and identity otherwise. 
				\item Let $X\equiv\{(x-1)^2z=y^2+(x-1)\}\backslash \{z=y^2\} \xrightarrow{\pr_x}\G_m$ be the $\A^1$-fibration. It factors as $X\xrightarrow{\rho}\sC\xrightarrow{p}\G_m$. Then similarly any $f:\A^1\to \sC$ the composition $p \circ f$ is constant. Since $p$ is surjective $f$ must be constant.
			\end{enumerate}
		\end{example}

\begin{definition} \label{multi-section definition}
Suppose, $\pi: X \to C$ is an $\mathbb{A}^1$-fibration. A morphism $\gamma: C \to X$ is called a horizontal section (or a multi-section) of $\pi$, if the morphism $\pi \circ \gamma: C \to C$ is surjective. The multi-section $\gamma$ is said to be a section of $\pi$, if the morphism $\pi \circ \gamma: C \to C$ is the identity morphism on $C$.
\end{definition}	
\begin{remark} \label{multi-section remark}
\begin{enumerate}
\item A multi-section $\gamma: C \to X$ of an $\A^1$-fibration $\pi: X \to C$, intersects every fiber $\pi^{-1}(x)$ , for all $x \in C$. However, a multi-section does not always intersect every irreducible components of a degenerate fiber (Example \ref{multi-section example}(3)).
\item Suppose, the base of the $\mathbb{A}^1$-fibration $\pi$ is $\mathbb{A}^1_k$.  Then in Definition \ref{multi-section definition}, the surjectivity of $\pi \circ \gamma$ is equivalent to $\pi \circ \gamma$ is non-constant, since the complement of finitely many points (at least one) in $\mathbb{A}^1_k$ is $\mathbb{A}^1$-rigid. If there exists a multi-section $\gamma$ of $\pi: X \to \A^1_k$, then the algebraic space $\mathcal{C}$ in the factorisation of $\pi$ in Theorem \ref{factor-A1-fibration} admits a non-constant morphism $\theta \circ \gamma: \A^1_k \to \sC$, so $\sC$ is not $\mathbb{A}^1$-rigid. In Corollary \ref{multi-section exists}, we will prove the converse.
 \end{enumerate}
\end{remark}
\begin{example} \label{multi-section example}
\begin{enumerate}
\item A multi-section can be thought of a section of an $\mathbb{A}^1$-fibration, in a generalized sense. An (algebraic) line bundle has always a section (zero section). Since $\A^1$-fibration is generically trivial, so an $\A^1$-fibration $\pi:X \to C$ has always a rational section, that is there is a rational map $\gamma: C \dashrightarrow X$ such that $\pi \circ \gamma$ is identity (as rational maps). But an $\A^1$-fibration, or even an $\mathbb{A}^1$-bundle does not always admit a section  (\cite[Example 2]{doub}, see also Example \ref{multi-section example}(4)). 
\item Consider, the $\mathbb{A}^1$-fibration $\pi: X \to \mathbb{A}^1_{\mathbb{C}}$ given by the projection to the $x$-axis, where 
$$X \subset \mathbb{A}^3_\mathbb{C} \text{ is given by } x^nz = P(y) - x, \ P(y) \in \mathbb{C}[y] $$
\text{ is a non-constant polynomial} and $n \geq 2$.
Then the map 
$$\gamma: \mathbb{A}^1_{\mathbb{C}} \to X \text{ defined as } t \mapsto (P(t), t, 0)$$
is a multi-section of $\pi$ and moreover $\text{Im}(\gamma)$ intersects every irreducible component of the fiber $\pi^{-1}(0)$ (which is the only degenerate fiber of $\pi$). Thus both $X$ and the algebraic space $\mathcal{C}$, appeared in the factorisation of $\pi$ (by Theorem \ref{factor-A1-fibration}) are $\mathbb{A}^1$-chain connected. 
\item 	A multi-section of an $\A^1$-fibration $\pi: X \to \A^1_\C$ does not always intersect every irreducible components of the degenerate fibers of $\pi$. Consider the following example:

Let $X$ be the surface given by
$$X \equiv\{f(x,y,z)=x^2(x-1)^2z-y(y-1)^2+x(x-1)=0\} \subseteq \mathbb{A}^3_\mathbb{C} $$
and consider the $\A^1$-fibration
$X \xrightarrow{\pr_x}\A^1_\C$ projection to the $x$-axis.
The surface $X$ is smooth, since  
\begin{align*}
    \nabla(f)&=(2x(x-1)(2x-1)z+2x-1,3y^2-4y+1,x^2(x-1)^2) \\
    &\neq (0,0,0);
\end{align*}
as when $x=0,1;\ \text{ then } 2x(x-1)(2x-1)z+2x-1=-1,1$ respectively, otherwise $x^2(x-1)^2\neq 0$. The degenerate fibers are over $x=0,1$, given by $y(y-1)^2=0$. So both the fibers $\pi^{-1}(0)$ and $\pi^{-1}(1)$ have two irreducible components: one irreducible component is reduced, given by $y=0$ and other irreducible component is non-reduced of multiplicity $2$, given by $(y-1)^2=0$. The morphism
$$\gamma(t):\A^1_\C \to X \text{ defined as } t \mapsto (t,t(t-1),t^2-t-2)$$ 
is a multi-section of $\pi$.
Observe that $\text{Im}(\gamma)$ intersects only the reduced components of the degenerate fibers (that is the component given by $y=0$), since $t(t-1) =0$, if $t = 0 \text{ or }1$. Thus the algebraic space $\sC$ (by Theorem \ref{factor-A1-fibration}) is not $\A^1$-invariant. So, $X$ is $\A^1$-connected (from the proof of Theorem \ref{connected-base-new}).
We also know that if $\alpha:\A^1_\C \to X, \ \alpha(t)=(x(t), y(t),z(t))$ is a multi-section , then it cannot intersect the only the non-reduced components of the degenerate fibers, since in that case $x(t),\ (x(t)-1) $ both have to be a square polynomial. \par
	However, we do not know whether $X$ admits a multi-section which intersects every irreducible components of the degenerate fibers. An affirmative answer would imply that $X$ (and also the algebraic space $\sC$) is $\A^1$-chain connected.

\item Consider, a particular example of (2) of an $\mathbb{A}^1$-fibration $\pi: X \to \mathbb{A}^1_k$, the projection to the $x$-coordinate, where $X$
is given by $x^2z = y^2-x$. Then $\pi$ does not admit a section (indeed, if $\gamma \equiv (t, y(t), z(t))$ is a section of $\pi$, then we would have $t^2$ divides $(y(t)^2 - t)$ in $k[t]$, which is not possible), but the morphism $\gamma: \mathbb{A}^1_k \to X$ given by $t \mapsto (t^2,t,0)$ is a multi-section of $\pi$. It is natural to ask when an $\mathbb{A}^1$-fibration admits a multi-section. We will also see an example of an $\mathbb{A}^1$-fibration (Example \ref{rigid example}) over the base $\mathbb{A}^1_k$, which it does not admit even a multi-section.
\end{enumerate}
 
\end{example}

We have the following corollary to Lemma \ref{not-a1-rigid} on the existence of a multi-section of an $\mathbb{A}^1$-fibration over $\mathbb{A}^1_k$.
\begin{corollary} \label{multi-section exists}
Suppose, $\pi:X \to \mathbb{A}^1_k$ is an $\mathbb{A}^1$-fibration from an affine surface $X$ and $X \xrightarrow{\theta} \sC \xrightarrow{\alpha} \A^1_k$ is the factorisation of $\pi$ as in Theorem \ref{factor-A1-fibration}, where $\theta$ is an \'etale $\mathbb{A}^1$-bundle. Then the following are equivalent:
\begin{enumerate}
    \item $\pi$ admits a multi-section.
    \item The algebraic space $\mathcal{C}$ is not $\mathbb{A}^1$-invariant.
    \item There exists a non-constant morphism $\gamma: \A^1_k \to \sC$.
\end{enumerate}
\end{corollary}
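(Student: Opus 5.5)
We will prove the cycle of implications $(1)\Rightarrow(3)\Rightarrow(2)\Rightarrow(3)\Rightarrow(1)$. Two of these are essentially already available. The implication $(2)\Rightarrow(3)$ is exactly Lemma \ref{not-a1-rigid}. The implication $(1)\Rightarrow(3)$ is Remark \ref{multi-section remark}(2). Indeed, if $\gamma:\A^1_k\to X$ is a multi-section, then $\pi\circ\gamma=\alpha\circ\theta\circ\gamma$ is non-constant. Hence $\theta\circ\gamma:\A^1_k\to\sC$ cannot factor through $\Spec k$, so it is a non-constant morphism.

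For $(3)\Rightarrow(2)$, we take a non-constant $\gamma:\A^1_k\to\sC$ and regard it as a homotopy $H\in\sC(\A^1_{U})$ with $U=\Spec k$, after composing with a suitable translation or scaling of $\A^1_k$. Since $\gamma$ is non-constant and $\alpha$ is quasi-finite, $\alpha\circ\gamma$ is non-constant. Therefore there exist $a,b\in k$ with $\alpha\gamma(a)\neq\alpha\gamma(b)$, and hence $\gamma(a)\neq\gamma(b)$ in $\sC(k)$. Reparametrising by $t\mapsto a+t(b-a)$, we get $H(0)\neq H(1)\in\sC(\Spec k)$. By Remark \ref{mazza}, $\sC$ is then not $\A^1$-invariant.

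The remaining step, $(3)\Rightarrow(1)$, is the one with content; we expect its main obstacle to be lifting $\gamma$ to $X$. Given a non-constant $\gamma:\A^1_k\to\sC$, we pull back the étale $\A^1$-bundle $\theta$ along $\gamma$ and obtain $\A^1_k\times_\sC X\to\A^1_k$. This pullback is a scheme, by the argument in the proof of Theorem \ref{sing-etale-bundle}. Its class lies in
$$H^1_{\text{\'et}}(\A^1_k,\Aut(\A^1_k))\cong H^1_{\text{Zar}}(\A^1_k,\Aut(\A^1_k))\cong \Pic(\A^1_k)=0,$$
using Remark \ref{a1 bundle properties}(4) exactly as in that proof. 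So the pullback is trivial and has a section $s_0$. Composing $s_0$ with the projection to $X$ gives a morphism $\tilde\gamma:\A^1_k\to X$ with $\theta\circ\tilde\gamma=\gamma$. Then $\pi\circ\tilde\gamma=\alpha\circ\gamma$ is a non-constant morphism $\A^1_k\to\A^1_k$, and hence it is surjective. Here we use that a non-constant morphism $\A^1_k\to\A^1_k$ is a non-constant polynomial, and that $k$ is algebraically closed, as in Remark \ref{multi-section remark}(2). Thus $\tilde\gamma$ is a multi-section in the sense of Definition \ref{multi-section definition}.

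The one point we will check carefully is that the identification of the étale torsor class with a Zariski one, via $\Aut(\A^1_k)\cong\G_m\ltimes\G_a$, holds over $\A^1_k$. This uses Hilbert 90 for $\G_m$ and the vanishing of étale cohomology of $\G_a$ on an affine scheme.
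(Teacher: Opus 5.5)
Your proposal is correct and follows essentially the same route as the paper: (2)$\Rightarrow$(3) by Lemma \ref{not-a1-rigid}, (1)$\Rightarrow$(2)/(3) by observing that $\theta\circ\gamma$ is non-constant, and the key direction by pulling $\theta$ back along $\gamma$, trivialising the resulting \'etale $\A^1$-bundle over $\A^1_k$, and composing a section with the projection to $X$. Your explicit argument for (3)$\Rightarrow$(2), your use of the quasi-finiteness of $\alpha$ to see that $\pi\circ\tilde\gamma=\alpha\circ\gamma$ is non-constant, and your Hilbert 90 / $\G_a$-vanishing justification of the torsor triviality spell out steps the paper leaves implicit.
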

\begin{proof}
The equivalence of (2) and (3) follows from Lemma \ref{not-a1-rigid} and (1) implies (2), by
Remark \ref{multi-section remark}). Suppose, $\sC$ is not $\A^1$-invariant, by Lemma \ref{not-a1-rigid} there is a non-constant morphism $\gamma: \mathbb{A}^1_k \to \mathcal{C}$. Consider the pullback diagram
\[\xymatrixcolsep{3pc}
			\xymatrix{ \mathbb{A}^1_k \times_\mathcal{C} X \ar[d]_{\theta^*\gamma} \ar[r]_-{\text{pr}} & X \ar[d]^\theta \\
				\mathbb{A}^1_k \ar[r]^{\gamma } &\sC}
			\]
The morphism $\theta^*\gamma: \mathbb{A}^1_k \times_\mathcal{C} X \to \mathbb{A}^1_k$ is an \'etale $\mathbb{A}^1$-bundle. By the same way in the proof of Theorem \ref{sing-etale-bundle}, we have $H^1_{\text{\'et}}(\mathbb{A}^1_k, \text{Aff}_1) = \{0\}$, so any \'etale $\mathbb{A}^1$-bundle over $\mathbb{A}^1_k$ is trivial. So $\theta^*\gamma$ admits a section, that is there is a morphism $s: \mathbb{A}^1_k \to \mathbb{A}^1_k \times_\mathcal{C} X$ such that $\theta^*\gamma \circ s$ is the identity morphism. Suppose, 
$$\Tilde{\gamma}= \text{pr} \circ s: \mathbb{A}^1_k \to X.$$
Then $\Tilde{\gamma}$ is a lift of $\gamma$, that is $\theta \circ \Tilde{\gamma}  = \gamma$, the lower triangle in the diagram commutes:
\[\xymatrixcolsep{3pc}
			\xymatrix{ \mathbb{A}^1_k \times_\mathcal{C} X \ar[d]^{\theta^*\gamma} \ar[r]_-{\text{pr}} & X \ar[d]^\theta \\
				\mathbb{A}^1_k   \ar@/^/[u]^s \ar[r]_{\gamma} \ar[ur]_{\Tilde{\gamma}} & \sC}
			\]
Since, $\gamma$ is non-constant, so the image of $\Tilde{\gamma}$ is not contained in a fiber of $\theta$. Since $\mathbb{A}^1_k \setminus \{\text{finitely many points}\}$ is $\mathbb{A}^1$-rigid, so the morphism $\pi \circ \Tilde{\gamma}$ is surjective. Thus, $\Tilde{\gamma}$ is a multi-section of $\pi$.
\end{proof}
Suppose, if every fiber of the $\mathbb{A}^1$-fibration $\pi: X \to \mathbb{A}^1_k$ is irreducible.
Then from the construction of $\sC$ (Remark \ref{comments of structure theorem}),
a point of $\mathcal{C}$ corresponds to a point of the base $\mathbb{A}^1_k$, which agains corresponds to a fiber of $\pi$. Thus, if moreover $\mathcal{C}$ is not $\mathbb{A}^1$-invariant, then the non-constant morphism $\gamma: \mathbb{A}^1_k \to \mathcal{C}$ (Lemma \ref{not-a1-rigid}) is surjective on the points and the multi-section $\Tilde{\gamma}: \mathbb{A}^1_k \to X$ (Corollary \ref{multi-section exists}) intersects every fiber of $\pi$. Therefore, in this case both $\mathcal{C}$ and $X$ are $\mathbb{A}^1$-chain connected. We have the following corollary:
\begin{corollary}\label{irred-fibers}
			Suppose, $\pi:X \to \mathbb{A}^1_k$ is an $\mathbb{A}^1$-fibration from a smooth affine surface $X$ such that every fiber of $\pi$ is irreducible.  
Let $\sC$ be the algebraic space in Theorem \ref{factor-A1-fibration}. If $\sC$ is not $\A^1$-invariant, 
			Then both $\sC$ and $X$ are $\A^1$-chain connected. 
\end{corollary}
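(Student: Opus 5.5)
We need to prove Corollary \ref{irred-fibers}: if every fiber of $\pi$ is irreducible and $\sC$ is not $\A^1$-invariant, then $\sC$ and $X$ are $\A^1$-chain connected.

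The first step is to use Lemma \ref{not-a1-rigid} and Corollary \ref{multi-section exists}. Since $\sC$ is not $\A^1$-invariant, they give a non-constant morphism $\gamma:\A^1_k\to\sC$ together with a lift $\tilde\gamma:\A^1_k\to X$ with $\theta\circ\tilde\gamma=\gamma$, and $\pi\circ\tilde\gamma:\A^1_k\to\A^1_k$ surjective. Because every fiber of $\pi$ is irreducible, Construction \ref{construction of algebraic space} gives $\check C=\A^1_k$. Then $\alpha:\sC\to\A^1_k$ is a bijection on $k$-points, since each closed point of $\sC$ corresponds to a unique irreducible component of a fiber. Hence $\gamma$ meets every $k$-point of $\sC$, and $\tilde\gamma$ meets every fiber $\pi^{-1}(a)$, which has a single component whose reduction is $\A^1_k$ (Remark \ref{properties a1 fibration}).

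The second step treats $X$ over $k$-points and over fields. Take any $k$-point $x\in X$ and put $a=\pi(x)$. Choose $t\in\A^1_k(k)$ with $\pi(\tilde\gamma(t))=a$. The reduced fiber $(\pi^{-1}(a))_{\rm red}\cong\A^1_k$ contains both $x$ and $\tilde\gamma(t)$, so a line inside it joins them. The image of $\tilde\gamma$ is itself a line, so any two $k$-points of $X$ are joined by a chain of at most three naive $\A^1$-homotopies. This shows $\sS(X)(k)=*$.

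For $\A^1$-chain connectedness one needs $\sS(X)(\Spec F)$ trivial for every finitely generated separable extension $F/k$. Here I would base change: $X_F\to\A^1_F$ is still an $\A^1$-fibration with geometrically irreducible fibers. A point $x\in X(F)$ lies over some $a\in F$. If $a$ is not a degenerate value, the fiber is $\A^1_F$. If $a$ is degenerate, then $a\in k$, because the degenerate values are finitely many $k$-points and $k$ is algebraically closed. In either case $(\pi^{-1}(a))_{\rm red}\cong\A^1_F$ joins $x$ to $\tilde\gamma_F(t)$ for some $t\in\bar F$. I expect the main obstacle here: $t$ must be found in $F$, while solving $\pi\tilde\gamma(t)=a$ may require an extension of $F$.

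To get around that obstacle, I would first try to connect $x$ to a $k$-rational point. For a nondegenerate value $a$, the fiber $\A^1_F$ contains a point of the rational section over the generic trivializing open $U\subset\A^1_k$, namely $s(a)$ with $s:U\to X$. Then $s$ restricted to the open $U\subset\A^1$ gives a chain from $s(a)$ to $s(a_0)$ for $a_0\in U(k)$, by composing with a line through $a$ and $a_0$. That is not directly an $\A^1$-path, so I would use instead the homotopy $\A^1_F\to\A^1_F$, $u\mapsto a+u(a_0-a)$, lifted via $s$ where defined. If this fails, I would rely on \cite[Lemma 3.3.6]{ictp} and the remark that chain-connectedness is checked over fields, together with the fact that $\sS$ is computed stalkwise. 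For $\sC$, the image under $\theta_*$ of the chain in $X$ gives a chain in $\sC$; equivalently, Theorem \ref{sing-etale-bundle} gives $\sS(X)\cong\sS(\sC)$, which finishes the argument.
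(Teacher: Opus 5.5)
Your first step and your treatment of $k$-points are exactly the paper's proof, and that part is correct. Since all fibres are irreducible, $\alpha:\sC\to\A^1_k$ is bijective on $k$-points. The non-constant $\gamma$ from Lemma~\ref{not-a1-rigid} is therefore surjective on $k$-points, its lift $\tilde\gamma$ meets every fibre, and the reduced fibres are lines. So any two $k$-points are joined by a chain of three naive homotopies, which gives $\sS(X)(k)=\sS(\sC)(k)=*$. The paper's argument goes no further than this. Its closing remark of Section~\ref{a1 invariance section} even treats chain connectedness of $X$ as equivalent to triviality of $\sS(\sC)(k)$.

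The genuine gap is the step you flagged yourself: $F$-points lying over a value $a\in F\setminus k$. Your repair fails because $u\mapsto s(a+u(a_0-a))$ is not a morphism $\A^1_F\to X$. The map $u\mapsto a+u(a_0-a)$ is an automorphism of $\A^1_F$, so it passes through the degenerate values in $k$, where the rational section $s$ need not extend (in the example below $\pi$ has no section at all). The fallback via \cite[Lemma 3.3.6]{ictp} and stalks supplies nothing. That lemma concerns $\pi_0^{\A^1}$, and $\A^1$-chain connectedness is by definition the fieldwise statement you are trying to prove.

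With the fieldwise definition you are using (the standard one, cf.\ \cite{am}), the gap cannot be closed, because the assertion is false. Take $X=\{x^2z=y^2-x\}$ and $\pi=\pr_x$ as in Example~\ref{multi-section example}(4). Every fibre is irreducible and $t\mapsto(t^2,t,0)$ is a multi-section, so $\sC$ is not $\A^1$-invariant. For a field $F$, $\sS(X)(F)$ is $X(F)$ modulo chains of morphisms $\A^1_F\to X$.

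Let $\varphi=(x(u),y(u),z(u)):\A^1_F\to X$ with $x(u)$ non-constant. Since $y^2=x(xz+1)$ with $x$ and $xz+1$ coprime in $F[u]$, we get $x=c\,g^2$ and $xz+1=c^{-1}h^2$ for some $c\in F^\times$ and $g,h\in F[u]$. If $x(u_1)=0$ for some $u_1\in F$, evaluating the second identity at $u_1$ gives $c=h(u_1)^2\in F^{\times 2}$. Hence the class of $x(R)$ in $F^\times/F^{\times 2}$, with $x(R)=0$ sent to the trivial class, is constant along every naive homotopy.

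For $F=k(a)$, the point $(a,0,-1/a)\in X(F)$ has class $[a]\neq 1$, while every $k$-point has trivial class. So $\sS(X)(k(a))\neq *$, and $\sS(\sC)(k(a))\neq *$ by Theorem~\ref{sing-etale-bundle}. (Conceptually, $[x]$ on $\{x\neq 0\}$ and $[1+xz]$ on $\{1+xz\neq 0\}$ glue to a section over $X$ of the $\A^1$-invariant sheaf $\G_m/2$. So these two points are even distinct in $\pi_0^{\A^1}(X)(k(a))$.)

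What your first paragraph, and the paper's argument, actually proves is the $k$-point statement $\sS(X)(k)=\sS(\sC)(k)=*$. The corollary is correct only if ``$\A^1$-chain connected'' is read in that restricted sense.
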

\begin{remark}
\begin{enumerate}
   \item In Example \ref{rigid example}, we will see that there exists an $\A^1$-fibration $\pi: X \to \A^1_k$ such that every fiber of $\pi$ is irreducible and the algebraic space $\sC$ (by Theorem \ref{factor-A1-fibration}) is $\A^1$-invariant. So $\pi$ does not admit a multi-section and $X$ is also not $\A^1$-connected, here $\pi_0^{\A^1}(X) \cong \sC$.
 \item   Suppose, $\pi: X \to \P^1_k$ is an $\A^1$-fibration from an affine surface $X$ such that every fiber of $\pi$ is irreducible. If moreover, the algebraic space $\sC$ (by Theorem \ref{factor-A1-fibration}) is not $\A^1$-invariant, then by Lemma \ref{not-a1-rigid}, there is a non-constant morphism $\gamma: \A^1_k \to \sC$, which can miss at most one point of $\sC$ (since $\sC$ is the algebraic space over $\P^1$) and consequently, the morphism $\Tilde{\gamma}: \A^1_k \to X$ (from Lemma \ref{not-a1-rigid}) may not intersect at most one fiber of $\pi$. Thus, we don't know whether $X$ or $\sC$ is $\A^1$-chain connected in this case. But $\sC$ (hence $X$ is also) is $\A^1$-connected (Theorem \ref{connected-base-new}).
 \end{enumerate}
\end{remark}
\begin{example}
Consider again Example \ref{multi-section example}(2) with $P(y) = (y-\alpha)^m$, for some $\alpha \in k$  fixed \cite[Example 5]{doub}
 $$X \equiv x^nz=(y-\alpha)^m-x \text{ and } \pi: X \to \A^1_k \text{ defined as }(x,y,z)\mapsto x.$$
Then, every fiber of $\pi$ is irreducible and $\pi$ admits a multi-section. So the algebraic space $\sC$ in Theorem \ref{factor-A1-fibration} is not $\A^1$-invariant. Thus, both $X$ and $\sC$ are $\A^1$-chain connected.
\end{example}

If every fiber of the $\A^1$-fibration $\pi; X \to C$, where $C \cong \A^1_k \text{ or } \P^1_k$, is reduced, then the algebraic space $\sC$ (in Theorem \ref{factor-A1-fibration}) is a scheme (possibly non-separated), 
isomorphic to $C$ with multiple points. Therefore, $\sC$ is $\A^1$-chain connected and for each copies of $\A^1$ in $\sC$ would lift to an $\A^1$ in $X$ (since $\theta: X \to \sC$ is \'etale locally trivial $\A^1$-bundle, by Theorem \ref{factor-A1-fibration}). Thus, $X$ is also $\A^1$-chain connected (see Section \ref{section a1 homotopy types} for the $\A^1$-homotopy types of such surfaces). Hence we have the following corollary:
\begin{corollary}
    Suppose, $\pi:X \to C$, where $C \cong \A^1_k \text{ or } \P^1_k$, is an $\mathbb{A}^1$-fibration from a smooth affine surface $X$ such that every fiber of $\pi$ is reduced and $\sC$ is the algebraic space in Theorem \ref{factor-A1-fibration}. Then both $\sC$ and $X$ are $\A^1$-chain connected.
\end{corollary}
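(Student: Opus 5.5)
The plan is to exploit the explicit structure of $\sC$ when all fibers are reduced. By Construction \ref{construction of algebraic space}, step (2) is vacuous, so $\sC=\check{C}$. This is the scheme obtained from $C$ by replacing each point $c_i$ with $d_i$ copies, where $d_i$ is the number of irreducible components of $\pi^{-1}(c_i)$. Concretely, $\check{C}$ is glued from open subschemes each isomorphic to $C$ with finitely many points removed and one chosen copy of each bad point restored, i.e.\ from open subschemes isomorphic to $C$ itself. In particular $\check{C}$ is covered by finitely many Zariski opens $C^{(j)}\cong C$, and any two points of $\check{C}$ lie in a common such chart or in two charts that intersect.

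\textbf{Step 1: $\sC$ is $\A^1$-chain connected.} Let $\mathcal{O}$ be a Henselian local essentially smooth scheme (or just consider field sections $\Spec F$; Nisnevich-locally is what $\sS(\sC)=*$ requires). A section $u\in\sC(\mathcal{O})$ factors through one chart $C^{(j)}\cong C$, since $\mathcal{O}$ is local and the charts cover. Because $C\cong\A^1_k$ or $\P^1_k$, $C^{(j)}(\mathcal{O})$ is connected by naive homotopies to a fixed $k$-point $p_j$. For $\A^1_k$, use the straight-line homotopy. For $\P^1_k$, work in an affine chart $\A^1\subset\P^1$ containing the image of the closed point of $\mathcal{O}$, which contains all of $\Spec\mathcal{O}$ since $\mathcal{O}$ is local. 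Then connect $p_j$ to $p_{j'}$ through a generic $k$-point $q$ lying in $C^{(j)}\cap C^{(j')}$, using lines inside $C^{(j)}$ and inside $C^{(j')}$. Hence $\sS(\sC)(\mathcal{O})=*$, so $\sS(\sC)\cong\Spec k$.

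\textbf{Step 2: lift to $X$.} By Theorem \ref{sing-etale-bundle}, $Sing_*(X)\to Sing_*(\sC)$ is a Nisnevich local weak equivalence (indeed a stalkwise trivial Kan fibration). Applying $\pi_0$ and sheafifying, and using Remark (1) that $\sS(Y)$ is the sheafification of $U\mapsto\pi_0(Sing_*(Y)(U))$, this gives $\sS(X)\cong\sS(\sC)\cong\Spec k$. So $X$ is $\A^1$-chain connected. Alternatively, one can argue concretely: each chart $C^{(j)}\cong C$ has $\theta^{-1}(C^{(j)})\to C^{(j)}$ a Zariski $\A^1$-bundle, and the $\A^1$'s in the charts lift as in the proof of Corollary \ref{multi-section exists}, since étale $\A^1$-bundles over $\A^1_k$ are trivial.

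The main obstacle is Step 1 in the case $C\cong\P^1_k$. There I must make sure the chart description of $\check{C}$ is right, i.e.\ that each $\mathcal{O}$-point lands in a single chart isomorphic to $\P^1$ (or to an open of it containing $\A^1$). I would check this directly from the gluing in \cite[Example 7]{doub}. The second thing to verify is that the connecting point $q$ exists. This holds because the charts overlap on the complement of the finitely many bad points, which is nonempty.
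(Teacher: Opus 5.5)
Your proof is correct and follows the paper's route: identify $\sC=\check{C}$ as $C$ with some points multiplied, deduce chain connectedness from the Zariski charts isomorphic to $C$ (which overlap on a dense open), and then pass to $X$ through the étale $\A^1$-bundle $\theta$. Your Step 2 uses Theorem \ref{sing-etale-bundle} to get $\sS(X)\cong\sS(\sC)$; this is the systematic form of the paper's remark that ``each copy of $\A^1$ in $\sC$ lifts to $X$,'' and it has the advantage of handling two points the paper's sketch leaves implicit: matching lifted endpoints inside the fibers, and points over fields or Henselian local schemes other than $k$.
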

More generally, 
we have the following result:

			\begin{theorem}\label{connected-base-new}
            Let $X$ be a smooth affine surface, which admits an $\A^1$-fibration
$\pi: X \rightarrow C$ is an onto a smooth, rational curve $C$. Then $\pi_0^{\A^1}(X)$ is $\A^1$-invariant.
			\end{theorem}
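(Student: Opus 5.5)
We reduce to the algebraic space $\sC$ and then split according to whether $\sC$ is $\A^1$-invariant. By Theorem \ref{factor-A1-fibration}, $\pi$ factors as $X \xrightarrow{\theta} \sC \xrightarrow{\alpha} C$. By Corollary \ref{factorisation A1 equivalence}, $\theta$ is an $\A^1$-weak equivalence, so $\pi_0^{\A^1}(X) \cong \pi_0^{\A^1}(\sC)$. It therefore suffices to show that $\pi_0^{\A^1}(\sC)$ is $\A^1$-invariant.

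\textbf{Case 1: $\sC$ is $\A^1$-invariant.} Then $\sC$ is $\A^1$-fibrant as a sheaf, so $\pi_0^{\A^1}(\sC) \cong \sC$, and we are done. This covers, for instance, the situation of Example \ref{rigid example}.

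\textbf{Case 2: $\sC$ is not $\A^1$-invariant.} Lemma \ref{not-a1-rigid} gives a non-constant $\gamma:\A^1_k \to \sC$. Since $C$ is rational, $C$ is an open subset of $\P^1_k$, and the non-constant map $\alpha\circ\gamma:\A^1_k \to C$ forces $C \cong \A^1_k$ or $\P^1_k$ (an $\A^1$ cannot map non-constantly into $\P^1$ minus two points). We aim to show that $\sC$ is $\A^1$-connected, which makes $\pi_0^{\A^1}(\sC) \cong \Spec k$, trivially $\A^1$-invariant.

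By \cite[Lemma 3.3.6]{ictp} (recalled above), it suffices to check that $\pi_0^{\A^1}(\sC)(\Spec F)$ is trivial for every finitely generated field extension $F/k$. For this we use the bijection $\pi_0^{\A^1}(\sC)(F) \cong \sL(\sC)(F)$ and show that all $F$-points of $\sC$ become equal in $\sL(\sC)(F)$. We proceed in three steps.
\begin{enumerate}
\item An $F$-point of $\sC$ either lies over the generic point of $C$ (or, more generally, in the open scheme $V$ over which $\alpha$ is an isomorphism onto an open subset of $C$), or it is a $k$-point among the finitely many special points, base-changed to $F$.
\item The open part $V \subset C \subset \P^1$ is a rational curve. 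All its $F$-points are $\A^1$-chain connected in $\sC$ after adding the finitely many special points: the naive homotopies from lines in $C$ lift through $\alpha$ over $V$ and extend across the missing points using the multiple points of $\sC$.
\item A special point $c$ lying in $\mathrm{Im}(\gamma)$ is joined to the generic part by $\gamma$ itself. For a special point not hit by any line, we argue with \emph{ghost homotopies}. Choose an étale neighbourhood $U \to \sC$ of $c$ on which $\sC$ is a scheme, together with a map from a Nisnevich cover of $\A^1_F$ to $\sC$ that agrees with a line through $c$ on one chart and with a line through a generic point on another. This exhibits $c$ and a generic point as equal in $\sS^2(\sC)(F)$, hence in $\sL(\sC)(F)$. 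Concretely, the line with a doubled point (or the quotient by an étale relation near a non-reduced fibre) admits such a two-chart homotopy, much like the affine line with doubled origin, which is $\A^1$-connected but not chain connected.
\end{enumerate}

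The main obstacle is step 3 for points of $\sC$ lying over non-reduced fibres that are missed by every $\A^1$ (compare Example \ref{multi-section example}(3)). To handle these, we first try the local description from Construction \ref{construction of algebraic space}(b): near such a point, $\sC$ is the quotient of a line with multiple points by a free finite group action. We would build the ghost homotopy on an étale cover of that line with multiple points, where Nisnevich-local chain homotopies exist, and descend it. If $C \cong \P^1$, we must also handle the one point possibly missed by $\gamma$; there we would use a second line obtained by composing $\gamma$ with an automorphism of $\P^1$ moving the missing point, lifted via Lemma \ref{not-a1-rigid}.
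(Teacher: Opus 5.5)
\textbf{There is a genuine gap, in Steps 2 and 3.} Your reduction to $\sC$ and the dichotomy on whether $\sC$ is $\A^1$-invariant match the paper. So does the plan of proving $\A^1$-connectedness of $\sC$ via ghost homotopies in $\sS^2(\sC)$. Steps 2 and 3, however, carry all the content, and neither works as written.

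\textbf{Step 2.} Lines in $C$ do not lift to $\sC$ across points with non-reduced fibres. Near such a point of multiplicity $m$, a map $\A^1_k\to C$ lifts only if its ramification there is divisible by $m$. Compare the multi-section $t\mapsto t^2$ in Example \ref{multi-section example}(4), and Example \ref{rigid example}, where no non-constant line lifts at all. The multiple points of $\sC$ only help over reduced fibres. You do not need Step 2 anyway: in Case 2, $\mathrm{Im}(\gamma)$ already contains the generic point and all but finitely many $k$-points.

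\textbf{Step 3.} A ghost homotopy requires the maps on the different charts of the cover of $\A^1$ to agree in $\sS(\sC)$ on the overlaps, and you never say why this would hold.
\begin{itemize}
\item ``A line through $c$'' does not exist, by your own hypothesis.
\item An étale chart $U\to\sC$ around $c$ cannot supply one either. A line in $U$ through a preimage of $c$ would push forward to a line in $\sC$ through $c$. So ``building the homotopy on the chart and descending it'' just restates the problem; this is exactly the situation of Example \ref{multi-section example}(3).
\item For $C\cong\P^1$, Lemma \ref{not-a1-rigid} lifts maps $\A^1_k\to\sC$ to $X$. It does not lift $\sigma\circ\alpha\circ\gamma$, for $\sigma\in\Aut(\P^1_k)$, back to $\sC$, and $\sigma$ does not preserve the divisibility conditions at the special points.
\item Incidentally, the affine line with doubled origin is $\A^1$-chain connected, since its two copies of $\A^1$ meet.
\end{itemize}

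\textbf{The missing idea.} In the paper, the overlap compatibility comes from the global line $\gamma$. Take any $k$-point $\beta\notin\mathrm{Im}(\gamma)$.
\begin{enumerate}
\item Choose a rational curve germ $\tilde C_\beta\subset X$ transversal to the fibre over $\beta$, meeting it only at $\tilde\beta$.
\item Shrink it so that $\theta(\tilde C_\beta\setminus\{\tilde\beta\})$ lies in $\mathrm{Im}(\gamma)$ and in the locus where $\theta$ is a trivial bundle.
\item On that locus, the lift $\tilde\gamma$ from Lemma \ref{not-a1-rigid} is built from the zero section. Hence the zero section lands in the closed curve $\mathrm{Im}(\tilde\gamma)\subset X$, whose normalisation is $\A^1_k$. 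This gives $\theta|_{\tilde C_\beta\setminus\{\tilde\beta\}}=\gamma\circ\tilde\theta$ for some $\tilde\theta$ with values in $\A^1_k$.
\item Embed $\tilde C_\beta\hookrightarrow\A^1_k$ with $\tilde\beta\mapsto 0$. Use the Zariski cover $\tilde C_\beta\sqcup(\A^1_k\setminus\{0\})\to\A^1_k$, carrying $\theta$ on the first piece and the constant $\alpha=\gamma(a)$ on the second.
\item On the overlaps the two maps are joined by the naive homotopy $\gamma(t\tilde\theta+(1-t)a)$, so $\alpha=\beta$ in $\sS^2(\sC)(k)$.
\end{enumerate}
This handles every missed point uniformly, special or not, including the one possibly missed when $C\cong\P^1$. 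It needs no local charts and no second line.
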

		\begin{proof}
			Following the notations in Theorem \ref{factor-A1-fibration} and Construction \ref{construction of algebraic space}
            , the $\mathbb{A}^1$-fibration $\pi:X \to C$ factorises as:
			$$X \xrightarrow{\theta} \mathcal{C} \xrightarrow{p} \check{C} \xrightarrow{\check{p}}C.$$ 
			In this factorisation $\theta:X\to\sC$ is a \'etale $\A^1$-bundle over the algebraic space $\sC$, which is not a scheme if $\pi$ has a non-reduced fiber. Each point of $\check{C}$ corresponds to an irreducible component of a fiber of $\pi$. 
            If every fiber of $\pi$ is reduced, then $\sC\xrightarrow{p}\check{C}$ is an isomorphism.
			By Corollary \ref{factorisation A1 equivalence}, the morphism $\theta: X \to \mathcal{C}$ is an $\mathbb{A}^1$-weak equivalence; therefore 
			$$\pi_0^{\mathbb{A}^1}(X) \xrightarrow{\cong} \pi_0^{\mathbb{A}^1}(\mathcal{C}).$$
			Thus if $\sC$ is $\A^1$-invariant, then 
			$$\pi_0^{\mathbb{A}^1}(X) \cong \pi_0^{\A^1}(\sC) \cong \sC \text{ is also } \A^1 \text{-invariant}.$$ 
			For the remaining case, assume that $\sC$ is not $\A^1$-invariant. In this case, we will prove that $\mathcal{C}$ is $\mathbb{A}^1$-connected. This would imply that $X$ is also $\mathbb{A}^1$-connected.

To prove that $\mathcal{C}$ is $\mathbb{A}^1$-connected, by \cite[Lemma 3.3.6]{ictp} we need to prove that $\pi_0^{\mathbb{A}^1}(\mathcal{C})(Spec \ F)$ is trivial, for every finitely generated field extension $F/k$. The algebraic space $\mathcal{C}$ of dimension $1$ has only two types of points: the generic point, say $\eta$ having residue field $k(t)$ (the morphism $\alpha = \check{p} \circ p: \sC \to C$ is birational and $C$ is a rational curve) and the $k$-points. Since we have assumed that $\sC$ is not $\A^1$-invariant, so by Lemma \ref{not-a1-rigid}, 
there is a non-constant morphism $\gamma: \mathbb{A}^1_k \to \mathcal{C}$. Since $\sC$ is of dimension $1$, so $\text{Im}(\gamma)$ contains the generic point $\eta$. Thus, for any $x \in \sC(k)$ such that $x \in \text{Im}(\gamma)$, we have 
$$\eta = x \in \sS(\sC)(k(t)).$$
Hence to prove $\sC$ is $\A^1$-connected, we only need to show $\pi_0^{\A^1}(\sC)(k)$ is trivial. We will actually prove that $\sS^2(\sC)(k)$ is trivial, that is any two points 
$\alpha,\beta\in\sC(k)$ we will prove that $\alpha =\beta\in\sS^2(\sC)(k)$. If both the points $\alpha, \beta \in \text{Im}(\gamma)$, then $\alpha = \beta \in \sS(\sC)(k)$. So without loss of generality, we can assume that $\alpha \in \mathrm{Im}(\gamma)$ and $\beta \notin \mathrm{Im}(\gamma)$. Suppose, $p(\beta) = \check{\beta} \in \check{C}$.

Since $\gamma$ is non-constant and $\sC$ is of dimension $1$, so $\text{Im}(\gamma)$ misses only finitely many $k$-points of $\sC$.

			Following construction of the algebraic space $\sC$ as in \cite[Theorem 9]{doub}, 
            there is a germ of smooth curve $\tilde{C}_\beta\hookrightarrow X$ intersecting the fiber $(p \circ \theta)^{-1}(\check{\beta})$
            transversally. Since $X$ is a rational variety, therefore we can choose $\tilde{C}_\beta$ to be a rational curve \cite[Proposition 3.4]{bhs1}. 
            By shrinking $\tilde{C}_\beta$ if necessary, we can assume the following:
			\begin{enumerate}
			\item There exist a unique point $\tilde{\beta}\in \tilde{C}_\beta$ such that $\theta(\tilde{\beta}) = \beta.$ 
			\item The morphism $\phi:= \tilde{C}_\beta \xrightarrow{\theta} \sC \xrightarrow{p} \check{C}$
            is quasi finite onto its image.
			\item Im$(\phi)$ is an affine, open neighbourhood of $\check{\beta}$ in $\check{C}$.
			\item For a point $y \in \text{Im}(\phi)$, if $y \neq \check{\beta}$, then the fiber $(p \circ \theta)^{-1}(y)$ is reduced and irreducible.
			\item $U:=\theta(\tilde{C}_\beta) \setminus \{\beta\} \subseteq \text{Im}(\gamma)$, since $\text{Im}(\gamma)$ misses only finitely many $k$-points of $\sC$.
            \item The \'etale $\A^1$-bundle $\theta: \theta^{-1}(U) \to U$ is the trivial $\A^1$-bundle over $U$, since the $\A^1$-fibration $\pi$ is generically trivial.
			\end{enumerate}

 By (5), $\theta(\tilde{C}_\beta \setminus \{\tilde{\beta}\}) \subseteq \text{Im}(\gamma)$. We claim that there is a morphism 
 $$\tilde{\theta}: \tilde{C}_\beta \setminus \{\tilde{\beta}\} \to \A^1_k$$
 such that the restriction $\theta: \tilde{C}_\beta \setminus \{\tilde{\beta}\} \to \sC$ factors through $\tilde{\theta}$, that is
 $$\theta|_{\tilde{C}_\beta \setminus \{\tilde{\beta}\}} = \gamma \circ \tilde{\theta}.$$
Indeed following the proof of Lemma \ref{not-a1-rigid}, $\gamma$ is constructed as follows
$$\gamma= \theta \circ \tilde{\gamma}, \  \A^1_k\xrightarrow{\tilde{\gamma}} X \xrightarrow{\theta} \sC ,$$
such that $\tilde{\gamma}|_{\gamma^{-1}(U)}$ is the following composition of morphisms
$$\gamma^{-1}(U)\xrightarrow{s_0} \gamma^{-1}(U)\times_\sC X \to U \times_\sC X\xhookrightarrow{\text{open}} X,$$
where $s_0$ is the zero section of the morphism $\gamma^{-1}(U)\times_\sC X \to \gamma^{-1}(U)$ (denote it by $\theta_\gamma$), which is the trivial $\A^1$-bundle.
Consider the following commutative diagram. 
 	\[
 	\xymatrix{
 		\gamma^{-1}(U)\times_\sC X \ar[d]^{{\theta}_\gamma} \ar[r] & U\times_\sC X \ar[d]^{{\theta}_U} \ar[r] & X \ar[d]^{\theta} &  \\
 		\gamma^{-1}(U) \ar@/^/[u]^{s_0} \ar@{->>}[r] & U \ar@/^/[u]^{s_0} \ar@{^{(}->}[r] & \sC & \A^1_k \ar[ul]_{\tilde{\gamma}} \ar[l]^\gamma  }     
 	\]
    Here both the morphisms $\theta_\gamma:\gamma^{-1}(U)\times_\sC X \to \gamma^{-1}(U)$ and $\theta_U: U \times_\sC X \to U$ are the trivial $\A^1$-bundles and their corresponding zero sections are denoted by $s_0$ (by shrinking $\tilde{C}_\beta$, we can assume $\gamma^{-1}(U) \to U$ is surjective).
    
 From the diagram, we have Im$(\tilde{q}:U\xrightarrow{s_0}U\times_\sC X \to X)\subset \mathrm{Im}(\tilde{\gamma}) $.
 Since $X$ is affine, so Im$(\tilde{\gamma})$ is closed in $X$ and the normalisation of $\text{Im}(\tilde{\gamma})$ is $\A^1_k$. Therefore $\tilde{q}$	factors through the normalisation map $\A^1_k \to \text{Im}(\tilde{\gamma})$. Thus, there exists a morphism $q^\prime: U \to \A^1_k$ such that $\tilde{q}: U \to X$ is the composition of the morphisms
 $$U\xrightarrow{q'}\A^1_k \xrightarrow{\text{normalisation}}\mathrm{Im}(\tilde{\gamma}) \hookrightarrow X.$$
 Consequently, 
 $$\theta|_{\tilde{C}_\beta \setminus \{\tilde{\beta}\}} = \gamma \circ \tilde{\theta}, \text{ where } \tilde{\theta}:=q^\prime \circ \theta|_{\tilde{C}_\beta \setminus \{\tilde{\beta}\}}.$$
 As $\tilde{C}_\beta$ is a rational curve, we choose an open immersion $j:\tilde{C}_\beta \hookrightarrow \A^1_k, $ such that $\tilde{\beta}\mapsto 0$. Let us denote the canonical immersion $\A^1_k\backslash\{0\}\hookrightarrow\A^1_k$ by $i$. 

 Now the $\A^1$-ghost homotopy between $\alpha$ and $\beta$ is as follows:
 \begin{equation}
 	\label{new-main-ghost-homotopy}
 	\xymatrixcolsep{5pc}
 	\xymatrix{V \times_{\mathbb{A}^1_k} V \ar@<-.5ex>[r]_-{\pr_2} \ar@<.5ex>[r]^-{\pr_1} &{V:=\tilde{C}_{\beta}\sqcup (\A^1_k\backslash\{0\}}) \ar[dd]^{(j,i)} \ar[r]^-{H=\theta|_{\tilde{C}_\beta } \sqcup \alpha}&\sC \\
 		\\
 		{\mathrm{Spec}(k)} \ar@<-.5ex>[r] \ar@<.5ex>[r] \ar@<-.5ex>[uur]_{i_1} \ar@<.5ex>[uur]^{\tilde{\beta}} &{\A^1_k}}
 \end{equation}

In the diagram \ref{new-main-ghost-homotopy}, $(j,i): V \to \A^1_k$ is 
is a Nisnevich cover of $\A^1_k$, where
$$V:=\tilde{C}_{\beta} \sqcup (\A^1_k \backslash \{0\}),  $$
along with the open immersions $j$ and $i$ (so it is actually a Zariski covering).
Suppose, $H : V \to \mathcal{C}$ is given by the restriction $\theta: \tilde{C}_\beta \to \sC$ and
and $H|_{\mathbb{A}^1_k \setminus \{0\}}$ is the constant morphism to $\alpha$. The Zariski covering $V \to \mathbb{A}^1_k$ has the lifts $\Tilde{\beta} \in \Tilde{C}_\beta$ and $1 \in \mathbb{A}^1_k \setminus \{0\}$ of $0$ and $1$ in $\mathbb{A}^1_k$ respectively. And $H$ maps $\Tilde{\beta}$ to $\beta$ and $1$ to $\alpha$.
Now,
$$V \times_{\mathbb{A}^1_k} V = \Tilde{C}_\beta \sqcup (\mathbb{A}^1_k \setminus\{0\}) \sqcup (\Tilde{C}_\beta \times_{\mathbb{A}^1_k} \mathbb{A}^1_k \setminus\{0\}) \sqcup (\mathbb{A}^1_k \setminus\{0\} \times_{\mathbb{A}^1_k} \Tilde{C}_\beta)$$
and there are two morphisms
$$\text{pr}_1, \text{pr}_2: V \times_{\mathbb{A}^1_k} V \to V.$$
To give an $\mathbb{A}^1$-ghost homotopy between $\alpha$ and $\beta$ (that would be $H$) we need to give a naive $\mathbb{A}^1$-homotopy between 
$$H \circ \text{pr}_1 \text{ and } H \circ \text{pr}_2.$$
Now, the morphisms $\text{pr}_1$ and $\text{pr}_2$ are same to the connected components $\Tilde{C}_\beta$ and $\mathbb{A}^1_k \setminus\{0\}$. The other two connected components are both isomorphic to $\tilde{C}_{\beta}\backslash\{\tilde{\beta}\}$. But $H \circ \text{pr}_1$ factors as
$$\tilde{C}_{\beta}\backslash\{\tilde{\beta}\} \hookrightarrow \Tilde{C}_\beta \xrightarrow{\theta} \mathcal{C}$$
and $H \circ \text{pr}_2$ factors as
$$\tilde{C}_{\beta}\backslash\{\tilde{\beta}\} \xrightarrow{j} \mathbb{A}^1_k \setminus \{0\} \xrightarrow{\alpha} \mathcal{C}.$$
But we have proved that $\theta|_{\tilde{C}_\beta \setminus \{\tilde{\beta}\}}$ factors as
$$\tilde{C}_\beta \setminus \{\tilde{\beta}\} \xrightarrow{\tilde{\theta}} \A^1_k \xrightarrow{\gamma} \sC.$$
So there the $\mathbb{A}^1$-homotopy
$$G:\A^1_{\tilde{C}_\beta \setminus \{\tilde{\beta}\}} \to \sC$$
defined as $G = t (H \circ \text{pr}_1) + (1-t) (H \circ \text{pr}_2)$. Thus $H$ defines an $\A^1$-ghost homotopy between $\alpha$ and $\beta$. So, $\alpha = \beta \in \sS^2(\sC)(k)$.




 Hence $\sS^2(\sC)(k )$ is trivial. Since $\eta = x\in \sS(\sC)(k)$, for some $k$-point $x \in \text{Im}(\gamma)$, so for every finitely generated field extension $F/k$, $\sS^2(\sC)(F)$ is trivial and thus, $\pi_0^{\A^1}(\sC)(F)$ is trivial. Hence $\sC$ is $\A^1$-connected, by \cite[Lemma 3.3.6]{ictp}. Therefore, by Corollary \ref{factorisation A1 equivalence}, $X$ is also $\A^1$-connected. 
 \end{proof}
 	\begin{remark}
 	    Thus if $X$ is a rational, affine surface admitting an $\A^1$-fibration $\pi: X \to C$ 
        onto a smooth curve $C$ (the curve $C$ must be a rational curve) and $\sC$ is the algebraic space (by Theorem \ref{factor-A1-fibration}), then by Corollary \ref{a1 invariance of algebraic space} and Theorem \ref{connected-base-new} we have the following:
        \begin{enumerate}
            \item If $C$ is a $\A^1$-rigid curve, then $\pi_0^{\A^1}(X) \cong \sC$.
            \item If $C \cong \A^1_k \text{ or } \P^1_k$, then 
            \begin{enumerate}
                \item either $\sC$ is $\A^1$-invariant, in that case $\pi_0^{\A^1}(X) \cong \sC$ (Example \ref{rigid example}).
                \item or $\sC$ is not $\A^1$-invariant. In this case $\sS^2(\sC)(F)$ is trivial, for every finitely generated separable field extension $F/k$. So both $\sC$ and $X$ are $\A^1$-connected.
            \end{enumerate}
        \end{enumerate}
 	\end{remark}
\begin{corollary} \label{negative kodaira invariant}
Let $X$ be a smooth, affine surface over an algebraically closed field of characteristic zero. Suppose, $X$ is $\A^1$-uniruled. Then $\pi_0^{\A^1}(X)$ is $\A^1$-invariant.
\end{corollary}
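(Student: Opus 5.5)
By Theorem \ref{all uniruled equivalences}, since $X$ is a smooth affine $\A^1$-uniruled surface, it admits an $\A^1$-fibration $\pi: X \to C$ onto a smooth curve $C$. By Theorem \ref{factor-A1-fibration}, $\pi$ factors as $X \xrightarrow{\theta} \sC \xrightarrow{\alpha} C$. Corollary \ref{factorisation A1 equivalence} shows that $\theta$ is an $\A^1$-weak equivalence, so $\pi_0^{\A^1}(X) \cong \pi_0^{\A^1}(\sC)$. It therefore suffices to show that $\pi_0^{\A^1}(\sC)$ is $\A^1$-invariant. I will split into two cases according to whether $C$ is $\A^1$-rigid.

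\textbf{Case 1: $C$ is $\A^1$-rigid.} Corollary \ref{a1 invariance of algebraic space} shows that $\sC$ is $\A^1$-invariant, and Corollary \ref{rigid case a1 invariant} then gives $\pi_0^{\A^1}(X) \cong \sC$, which is $\A^1$-invariant.

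\textbf{Case 2: $C$ is not $\A^1$-rigid.} Then $C$ admits a non-constant morphism from $\A^1_k$, so $C$ is a smooth rational curve; over an algebraically closed field it is $\A^1_k$ or $\P^1_k$. Theorem \ref{connected-base-new} applies to such $C$ and gives the conclusion directly. Its proof has two subcases. If $\sC$ is $\A^1$-invariant, then $\pi_0^{\A^1}(X) \cong \sC$. Otherwise $\sC$, hence $X$, is $\A^1$-connected, so $\pi_0^{\A^1}(X) \cong \Spec k$, which is trivially $\A^1$-invariant.

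The main obstacle is checking the dichotomy in Case 2: if $C$ is not $\A^1$-rigid, it must be rational. To see this, note that a non-$\A^1$-rigid smooth curve has a non-constant morphism $\A^1_k \to C$, or, via Remark \ref{mazza}, a non-constant homotopy which restricts to a non-constant map $\A^1_k \to C$ at a $k$-point, as in Remark \ref{line remark}. Lüroth's theorem then makes $C$ rational. Alternatively, $X$ is rational whenever it is $\A^1$-ruled over a rational base, and a smooth rational affine-or-projective curve that is not $\A^1$-rigid is $\A^1_k$ or $\P^1_k$, since removing any further point makes it rigid. This hypothesis is exactly what Theorem \ref{connected-base-new} requires, so the two cases together cover every $\A^1$-uniruled smooth affine surface.
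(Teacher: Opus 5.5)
Your proposal is correct and follows the paper's own route. You split on whether the base $C$ is $\A^1$-rigid, invoke Corollary \ref{rigid case a1 invariant} in the rigid case, and in the non-rigid case observe that $C$ is rational (indeed $\A^1_k$ or $\P^1_k$), so Theorem \ref{connected-base-new} applies. Your Lüroth argument is the right justification for that last step, which the paper only asserts; the ``alternatively'' sentence already assumes $C$ is rational and can be dropped.
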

Therefore by Corollary \ref{rigid case a1 invariant} and Corollary \ref{negative kodaira invariant}, we have the $\A^1$-invariance of the $\pi_0^{\A^1}(X)$:
\begin{corollary}\label{main corollary}
Let $X$ be a smooth, affine surface over an algebraically closed field of characteristic zero. 
Then $\pi_0^{\A^1}(X)$ is $\A^1$-invariant.
\end{corollary}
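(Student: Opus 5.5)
The plan is a dichotomy on the logarithmic Kodaira dimension $\bar{k}(X)$. By Theorem \ref{all uniruled equivalences}, a smooth affine surface $X$ over $k$ either has $\bar{k}(X)\geq 0$, equivalently $X$ is not $\A^1$-uniruled, or it is $\A^1$-uniruled and admits an $\A^1$-fibration $\pi: X\to C$ onto a smooth curve $C$. In the first case $\pi_0^{\A^1}(X)$ is $\A^1$-invariant by Corollary \ref{invariant non-uniruled}. That corollary rests on Theorem \ref{non-uniruled}, which gives $\sS(X)\cong\sS^2(X)$ and hence $\sS(X)\cong\sL(X)\cong\pi_0^{\A^1}(X)$. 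So the whole content reduces to the uniruled case, which is Corollary \ref{negative kodaira invariant}, and my main task is to justify that corollary from the preceding results.

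For the uniruled case, I factor $\pi=\alpha\circ\theta$ with $\theta: X\to\sC$ an étale $\A^1$-bundle, using Theorem \ref{factor-A1-fibration}. By Corollary \ref{factorisation A1 equivalence}, $\theta$ is an $\A^1$-weak equivalence, so $\pi_0^{\A^1}(X)\cong\pi_0^{\A^1}(\sC)$ and it suffices to treat $\sC$. I then split according to the base curve.

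\textbf{Case 1: $C$ is $\A^1$-rigid.} Theorem \ref{rigid-base} and Lemma \ref{not-a1-rigid} show that $\sC$ is $\A^1$-invariant (Corollary \ref{a1 invariance of algebraic space}). Then $\pi_0^{\A^1}(X)\cong\sC$ is $\A^1$-invariant (Corollary \ref{rigid case a1 invariant}).

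\textbf{Case 2: $C$ is not $\A^1$-rigid.} A smooth curve that is not $\A^1$-rigid admits a non-constant map from $\A^1_k$, so $C$ is rational; in fact $C$ is $\A^1_k$ or $\P^1_k$. This puts us in the setting of Theorem \ref{connected-base-new}. Either $\sC$ is $\A^1$-invariant, and we conclude as in Case 1, or $\sC$ is $\A^1$-connected. In the latter situation $\pi_0^{\A^1}(X)\cong\Spec k$, which is trivially $\A^1$-invariant.

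The main obstacle is the second alternative of Case 2, namely showing that a non-$\A^1$-invariant $\sC$ is $\A^1$-connected. This has already been handled in Theorem \ref{connected-base-new}. There the non-constant map $\gamma:\A^1_k\to\sC$ from Lemma \ref{not-a1-rigid} chain-connects all but finitely many $k$-points of $\sC$ with the generic point. Each remaining $k$-point is joined to these by an explicit $\A^1$-ghost homotopy, built from a transversal rational germ $\tilde C_\beta\subset X$, which shows $\sS^2(\sC)(F)$ is trivial for every finitely generated $F/k$. I would only need to double-check that Theorem \ref{connected-base-new} indeed covers every non-rigid base, i.e. that such a $C$ is rational. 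With that, the two cases exhaust all smooth affine surfaces, and the corollary follows.
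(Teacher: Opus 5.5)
Your proposal is correct and follows the paper's argument: you split by $\bar{k}(X)$ (equivalently, by $\A^1$-uniruledness) and settle the non-uniruled case via $\sS(X)\cong\sS^2(X)$ (Corollary \ref{invariant non-uniruled}). In the uniruled case you pass to $\sC$ via Corollary \ref{factorisation A1 equivalence}, then handle an $\A^1$-rigid base by Corollary \ref{rigid case a1 invariant} and a rational base by Theorem \ref{connected-base-new}. The step you flag is the link the paper leaves implicit when it combines these results into Corollary \ref{negative kodaira invariant}, and it holds as you say: a smooth curve that is not $\A^1$-rigid receives a non-constant map from $\A^1_k$, hence is rational by L\"uroth, and is in fact $\A^1_k$ or $\P^1_k$.
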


\begin{corollary} \label{naive spaces}
Suppose, $X \in Sm/k$ is an affine surface, which is $\A^1$-uniruled, admitting an $\A^1$-fibration $\pi: X \to C$. Also, assume that the algebraic space $\sC$ in Theorem \ref{factor-A1-fibration} is $\A^1$-invariant (for example, if $C$ is an $\A^1$-rigid curve, by Corollary \ref{a1 invariance of algebraic space}). Then $X$ is $\A^1$-naive \cite[Definition 2.1.1]{asokII}, in particular $Sing_*(X)$ is $\A^1$-local.
\end{corollary}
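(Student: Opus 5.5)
Since $\sC$ is assumed $\A^1$-invariant, the plan is to transfer $\A^1$-locality from $\sC$ to $Sing_*(X)$ through Theorem \ref{sing-etale-bundle}, and then derive $\A^1$-naivety from the resulting sectionwise description.

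\textbf{Step 1: $\sC$ is $\A^1$-fibrant.} The algebraic space $\sC$ is an \'etale sheaf, hence a Nisnevich sheaf of sets. A simplicially constant Nisnevich sheaf is Nisnevich-local fibrant. By assumption it is $\A^1$-invariant, so it is $\A^1$-local, as the paper already uses in Corollary \ref{rigid case a1 invariant}. Moreover $Sing_*(\sC)$ is the constant simplicial sheaf $\sC$, since $\sC(\A^n_U)=\sC(U)$ by $\A^1$-invariance. In particular $Sing_*(\sC)$ is $\A^1$-local.

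\textbf{Step 2: transfer to $X$.} By Theorem \ref{sing-etale-bundle}, $\theta_*: Sing_*(X)\to Sing_*(\sC)=\sC$ is a Nisnevich local weak equivalence. $\A^1$-locality is a property invariant under isomorphism in $\sH_s(k)$, since it is defined via $Hom_{\sH_s(k)}$. Hence $Sing_*(X)$ is $\A^1$-local.

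\textbf{Step 3: naivety.} By \cite[Definition 2.1.1]{asokII}, $X$ is $\A^1$-naive when $Sing_*(X)$ is Nisnevich-local fibrant up to sectionwise equivalence. Equivalently, $Sing_*(X)$ must satisfy Nisnevich (affine) excision, so that $\pi_0(Sing_*(X)(U))\to [U,X]_{\A^1}$ is bijective on affine $U$. I would show that $\theta_*$ is in fact a sectionwise weak equivalence on affine $U$, not merely a stalkwise one. To do this, rerun the lifting argument of Theorem \ref{sing-etale-bundle} with $\sO$ replaced by an arbitrary smooth affine $U$. Given $g:\A^n_U\to \sC$, the pulled-back \'etale $\A^1$-bundle over $\A^n_U$ is Zariski by the cohomology comparison used there. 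Because $\sC$ is $\A^1$-invariant, $g$ factors through $U$, so the bundle is pulled back from a bundle $E\to U$. The proof of the theorem needs a global section compatible with the boundary data. The boundary data already gives sections on faces, and an affine-space torsor over an affine base has sections extending any closed-subscheme section, because $H^1(\A^n_U, \mathcal{I}\otimes L)=0$ for affine schemes. Hence the lifting problem is solvable with $\G_a$-torsor twisting in place of triviality. This makes $Sing_*(X)(U)\to \sC(U)$ a trivial Kan fibration for all affine $U$. So $Sing_*(X)$ is sectionwise equivalent on affines to the fibrant sheaf $\sC$, which gives affine Nisnevich excision and hence $\A^1$-naivety.

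The main obstacle is Step 3: the lift over a non-local affine base where the bundle is a nontrivial torsor. I would first try to handle it by showing the boundary-compatible section extends using vanishing of quasi-coherent $H^1$ on the affine $\A^n_U$. If that fails, I would instead invoke the characterization in \cite{asokII} that $\A^1$-locality of $Sing_*(X)$ for smooth affine $X$ together with affine Nisnevich excision suffices.
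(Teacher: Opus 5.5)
Your proposal is correct. Steps 1--2 are the argument the paper has in mind; it writes no separate proof, but combines Theorem \ref{sing-etale-bundle} with the fact, used in Corollary \ref{rigid case a1 invariant}, that an $\A^1$-invariant sheaf is $\A^1$-fibrant. So $Sing_*(X)\to Sing_*(\sC)=\sC$ is a Nisnevich local weak equivalence to an $\A^1$-local object, and $Sing_*(X)$ is $\A^1$-local.

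Your Step 3 is where you genuinely go beyond the paper. The paper asserts $\A^1$-naivety, which it itself identifies with affine Nisnevich excision for $Sing_*(X)$, without further argument. But the stalkwise conclusion of Theorem \ref{sing-etale-bundle} does not give this by itself, since a presheaf that is only locally weakly equivalent to an excisive one need not be excisive.

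Your upgrade supplies the missing step. You rerun the lifting argument over an arbitrary affine $U$, and replace $\mathrm{Pic}(\A^n_{\sO})=0$ by two facts:
\begin{enumerate}
\item the pulled-back bundle is a torsor under a line bundle $L$ on the affine scheme $\A^n_U$;
\item $H^1(\A^n_U,\mathcal{I}L)=0$, so a section given on $\partial\Delta^n_U$ extends.
\end{enumerate}
This shows $Sing_*(X)(U)\to \sC(U)$ is a trivial Kan fibration for every affine $U$. Excision is then inherited from the discrete Nisnevich sheaf $\sC$. That sectionwise statement is what naivety actually requires, so your route proves more than the paper's implicit one.

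A few small remarks:
\begin{itemize}
\item Factoring $g$ through $U$ is unnecessary. The sectionwise trivial fibration holds for any \'etale $\A^1$-bundle; $\A^1$-invariance of $\sC$ is used only to know that $Sing_*(\sC)=\sC$ is excisive.
\item Your fallback in the last paragraph is circular, because it presupposes the affine excision you are trying to establish. Drop it.
\item Like the paper's proof of Theorem \ref{sing-etale-bundle}, you tacitly use that compatible face maps glue to a morphism from the scheme-theoretic union $\partial\Delta^n_U$. This holds because the faces are locally coordinate hyperplanes, but it deserves a sentence.
\end{itemize}
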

			\begin{example} \label{rigid example}
 There are $\mathbb{A}^1$-fibrations $X \to \mathbb{A}^1$ such that $\sC$ is $\mathbb{A}^1$-rigid. Consider the smooth affine surface $X\equiv\{x^2(x-1)^2z-y^2+x(x-1)=0\}$ and $\pr_x:X\to \A^1$ is the fibration. For any morphism $\gamma: \A^1 \to X$, $\pr_x\circ\gamma$ is constant. Clearly if $\gamma(t)=(x(t),y(t),z(t))$ then 
	\begin{align*}
		x(t)^2(x(t)-1)^2z(t)-y(t)^2+x(t)(x(t)-1)&=0\\
		x(t)(x(t)-1)[x(t)(x(t)-1)z(t)+1]&=y(t)^2
	\end{align*}
	Thus any root $t_0$ of $x(t)\mathrm{\ or\ }(x(t)-1)$ must be a root of $y(t)^2$. So the roots of $x(t)$ and $(x(t)-1)$ must have multiplicity of multiple of $2$ since $x(t_0)(x(t_0)-1)z(t_0)+1=1$ for any root $t_0$ of $x(t)\mathrm{\ or\ }(x(t)-1)$. Thus both of them are square polynomial. Let $x(t)=u(t)^2,\ x(t)-1=v(t)^2$, then $$1=u(t)^2-v(t)^2=(u(t)-v(t))(u(t)+v(t)).$$ So both $(u(t)+v(t))$ and $(u(t)-v(t))$ are constant then both $u(t),v(t)$ are constant, so $\pr_x\circ\gamma(t)=x(t)=u(t)^2$ is constant.\par Now, let $\sC$ is the algebraic space in the factorisation $X\xrightarrow{\theta}\sC\xrightarrow{p}\A^1$. Then by Lemma \ref{irred-fibers} any non-constant map $\gamma:\A^1_k\to \sC$ is surjective. Also as $X\to \sC$ is a étale bundle any non-constant map $\gamma: \A^1_k \to \sC$ lifts to $X$ (Corollary \ref{not-a1-rigid}) Which must lie in a single fiber. Thus there does not exist any non-constant map $\gamma:\A^1_k\to \sC$ so $\sC$ is $\A^1$-rigid.  
%
%

			\end{example}


			\begin{remark}
 From Theorem \ref{connected-base-new} we see that if the algebraic space $\sC$ appearing in Theorem \ref{factor-A1-fibration} is not $\A^1$-invariant, then it is $\A^1$-connected by proving $\sS^2(\sC)(k)$ is trivial. 
 However we do not know whether in those cases $\sS(\sC)(k)$ is trivial (equivalently, $X$ is $\A^1$-chain connected) or not. Note that, for such an example of an $\A^1$-fibration $\pi:X \to C$ with $\sS(\sC)(k)$ non-trivial (and $\sS^2(\sC)(k)$ is trivial), the cannonical epimorphism $\sS(X)\to \pi_0^{\A^1}(X)$ is not an isomorphism, thus $Sing_*(X)$ is not $\A^1$-local (see \cite[Conjecture 2.2.8]{am}).
\end{remark} 
\section{$\mathbb{A}^1$-homotopy type of $\mathbb{A}^1$-ruled surfaces} \label{section a1 homotopy types}
In the previous section we have seen that an $\A^1$-fibration $\pi : X \to C$ where $X$ is a smooth affine surface and $C$ is a smooth curve factors as $X \xrightarrow{\theta}\sC\xrightarrow{p} C$ where $\theta$ is an étale $\A^1$-bundle, $\sC$ is a smooth algebraic space and $p$ is birational and quasi finite. We have also deduced that $\theta$ is an $\A^1$-weak equivalence.
 In this section, we describe the $\A^1$-homotopy types of an $\A^1$-uniruled surfaces in the following cases: 
the base curve $C$ of the $\A^1$-fibration $\pi: X \to C$ is $\A^1$-rigid
or every fiber of the $\A^1$-fibration $\pi: X \to C$ is reduced (in this case, the algebraic space $\sC$ is actually a scheme).


\subsection{The base curve $C$ is an $\mathbb{A}^1$-rigid curve}
Suppose, $X$ admits an $\mathbb{A}^1$-fibration $\pi: X \to C$ to an $\mathbb{A}^1$-rigid curve $C$. Then by Theorem \ref{factor-A1-fibration}, $\pi$ factors as
$$X \xrightarrow{\rho} \mathcal{C} \xrightarrow{p} C,$$
where $\rho$ is an étale locally trivial $\mathbb{A}^1$-bundle. By Corollary \ref{factorisation A1 equivalence}, $\rho$ is an $\mathbb{A}^1$-weak equivalence. Since $C$ is $\mathbb{A}^1$-rigid, so the algebraic space $\mathcal{C}$ is an $\mathbb{A}^1$-rigid Nisnevich sheaf (an algebraic space is an étale sheaf, so is a Nisnevich sheaf). Therefore, the $\mathbb{A}^1$-homotopy types of $\mathcal{C}$ is equivalent to the isomorphism types of $\mathcal{C}$ (Corollary \ref{homotopy types base rigid}). Thus the class of $\mathbb{A}^1$-homotopy types of a surfaces admitting an $\mathbb{A}^1$-fibration to an $\mathbb{A}^1$-rigid curve consists all the smooth one dimensional algebraic spaces (schemes or non-separated) that can be constructed in the factorisation. In particular this class consists all $\mathbb{A}^1$-rigid curves $C$ with multiple points (in the case when every fiber of $\pi$ is reduced), alongwith the non-separated algebraic spaces obtained from $C$, if there is a non-redued fiber.

		\subsection{The base curve $C$ is $\mathbb{A}^1$-connected}

In this subsection, we will find the $\mathbb{A}^1$-homotopy types of the smooth $\mathbb{A}^1$-ruled surfaces $X$ that admit $\mathbb{A}^1$-fibration $\pi: X \to C$ to an $\mathbb{A}^1$-connected curve with  every fiber of $\pi$ is reduced. 



Since $C$ is $\mathbb{A}^1$-connected, therefore $C$ is isomorphic to $\mathbb{A}^1$ or $\P^1$.
Let us recall the purity isomorphism in $\mathcal{H}_\bullet(k)$ by Morel and Voevodsky. 
\begin{theorem} \cite[Theorem 2.23, Section 3.2]{mv} \label{homotopy purity}
	Let $i: Z \to X$ be a closed immersion of schemes in $Sm/k$ and $N_{X,Z} \to Z$ is the normal bundle over $Z$ of rank $n=\text{codim}_Z(X)$. Then there is an $\mathbb{A}^1$-weak equivalence of pointed sheaves
	$$X/(X \setminus i(Z)) \sim Th(N_{X, Z}),$$
	where for a vector bundle $\rho: V \to X$, the Thom space of $\rho$ is defined as
	$$Th(\rho):= V/(V \setminus s(X)),$$
	here $s: X \to V$ is the zero section of $\rho$.
\end{theorem}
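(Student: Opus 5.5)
This is the homotopy purity theorem of Morel--Voevodsky, and the plan follows their deformation-to-the-normal-cone argument. Consider the deformation space $D = D(X,Z)$: the complement in the blow-up of $X\times\mathbb{A}^1$ along $Z\times\{0\}$ of the strict transform of $X\times\{0\}$. It is smooth over $k$ and comes with a flat map to $\mathbb{A}^1$. Its fiber over $1$ is $X$, and its fiber over $0$ is $N_{X,Z}$. Moreover, $Z\times\mathbb{A}^1$ embeds as a closed subscheme of $D$ that restricts to $i$ over $1$ and to the zero section $s$ over $0$. This gives the zig-zag of pointed sheaves
$$X/(X\setminus Z)\xrightarrow{i_1} D/(D\setminus Z\times\mathbb{A}^1)\xleftarrow{i_0} N_{X,Z}/(N_{X,Z}\setminus s(Z)) = Th(N_{X,Z}),$$
and the theorem reduces to showing that both arrows are $\mathbb{A}^1$-weak equivalences.

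Both arrows are natural in the pair $(X,Z)$, so I first reduce to a local statement. The first reduction is Nisnevich excision: if $U\to X$ is étale and $Z\times_X U\to Z$ is an isomorphism, then $U/(U\setminus Z)\cong X/(X\setminus Z)$ as Nisnevich sheaves, by the elementary distinguished square. The same holds for $D$ and for the normal bundle. The second reduction is Zariski Mayer--Vietoris on $X$: the quotient $X/(X\setminus Z)$ is obtained by gluing along a cover of $X$, and weak equivalences are stable under homotopy pushouts. Together these let me work locally on $X$ around $Z$. Locally, after shrinking, there is an étale map $X\to\mathbb{A}^n_Z$ that restricts to the zero section on $Z$ and is an isomorphism over $Z$; this uses smoothness and choosing local coordinates cutting out $Z$. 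By naturality and excision, applied simultaneously to $X$, to $D(X,Z)$ and to the normal bundle, the claim then reduces to the model case $(\mathbb{A}^n_Z, Z)$.

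In the model case everything is explicit. We have $D(\mathbb{A}^n_Z,Z)\cong \mathbb{A}^n_Z\times\mathbb{A}^1$ with $Z\times\mathbb{A}^1$ embedded by the zero section, and $N=\mathbb{A}^n_Z$. So both $i_0$ and $i_1$ are the inclusions of the fibers over $0$ and $1$ of the pair $(\mathbb{A}^n_Z\times\mathbb{A}^1,\, Z\times\mathbb{A}^1)$. Since $\mathbb{A}^1$ is contractible, these inclusions induce $\mathbb{A}^1$-weak equivalences on quotients; this uses \cite[Lemma 2.11, Section 2.2]{mv} for quotients of cofibrations.

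I expect the main obstacle to be the local-to-global step. The étale coordinate map has to be compatible with the deformation spaces, meaning it must induce étale maps $D(U,Z_U)\to D(X,Z)$ and $D(U,Z_U)\to D(\mathbb{A}^n_Z,Z)$ that are isomorphisms along $Z\times\mathbb{A}^1$. This needs the functoriality of deformation to the normal cone under étale maps. A second subtlety is gluing over a cover of $Z$ in a way that keeps the excision squares distinguished. My first attempt would be to phrase both sides as functors on pairs and run an induction on the number of opens in a cover, using homotopy pushouts.
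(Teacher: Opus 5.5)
The paper gives no proof of this statement; it only cites Morel--Voevodsky. Your outline is their deformation-to-the-normal-cone argument. The zig-zag through $D/(D\setminus Z\times\mathbb{A}^1)$, the Zariski Mayer--Vietoris step, Nisnevich excision and the model computation $D(\mathbb{A}^n_Z,Z)\cong\mathbb{A}^n_Z\times\mathbb{A}^1$ (via $x_i=ty_i$) are all fine.

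\textbf{The local model is false as stated.} In general there is \emph{no} Zariski neighbourhood $U$ of a point of $Z$ with an \'etale map $U\to\mathbb{A}^n_Z$ that restricts to the zero section on $Z\cap U$. Composing such a map with $\mathbb{A}^n_Z\to Z$ would give a morphism $U\to Z$ restricting to the inclusion on $Z\cap U$, and in particular a dominant map from $U$ onto $Z$. Take $X=\mathbb{A}^2$ and $Z$ a smooth affine curve of positive genus. Then $Z$ would be unirational, hence rational by L\"uroth, a contradiction. Local equations $f_1,\dots,f_n$ of $Z$ only give the $\mathbb{A}^n$-component of your map. The $Z$-component is exactly what does not exist Zariski-locally.

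\textbf{The repair is a roof of two excisions instead of a single map.}
\begin{enumerate}
\item Zariski-locally, smoothness gives an \'etale map $p=(q,f)\colon U\to\mathbb{A}^{d-n}\times\mathbb{A}^n$ with $Z\cap U=p^{-1}(\mathbb{A}^{d-n}\times 0)$. Hence $q|_{Z\cap U}$ is \'etale.
\item Put $W=U\times_{\mathbb{A}^{d-n}}(Z\cap U)$. The projection $W\to U$ is \'etale, as a base change of $q|_{Z\cap U}$. The map $W\to\mathbb{A}^n_{Z\cap U}$, $(u,z)\mapsto(z,f(u))$, is \'etale, being the base change of $p$ along $q|_{Z\cap U}\times\mathrm{id}\colon \mathbb{A}^n_{Z\cap U}\to\mathbb{A}^d$.
\item The preimage of $Z\cap U$ under the first map and the preimage of the zero section under the second are the same subscheme $(Z\cap U)\times_{\mathbb{A}^{d-n}}(Z\cap U)$. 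In it the diagonal is open and closed.
\item Deleting the other components gives $W'$. This is an \'etale neighbourhood of $Z\cap U$ both in $U$ and in $\mathbb{A}^n_{Z\cap U}$, so excision applies along both legs.
\end{enumerate}

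What you flag as the main obstacle is then routine. The extended Rees algebra $\mathcal{O}[t,t^{-1}I]$ commutes with flat base change, so $D(W',Z)=D(U,Z)\times_{U\times\mathbb{A}^1}(W'\times\mathbb{A}^1)$. This gives \'etaleness and the isomorphism over $Z\times\mathbb{A}^1$ at once, and the same holds for the normal bundles. Finally, every open subset of such a $U$ again admits such a roof, so the class of good opens is closed under shrinking. Your Mayer--Vietoris induction over a finite cover then goes through.
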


		\begin{example}
\begin{enumerate}
	\item Let $X=\tilde{\A}^1$ denotes $\A^1$ with \textit{double} origins. It is the homotopy pushout of the following diagram

	\[
	\xymatrix{{\G_m} \ar@{^{(}->}[d] \ar[r] & {\A^1\cong*} \ar[d] \\
		{\A^1} \ar[r] & {\tilde{\A}^1\cong\A^1/\G_m\cong\P^1.}}
	\]
	As $\A^1/\G_m$ is the homotopy pushout since $\G_m\hookrightarrow\A^1$ is cofibration. Then $\A^1/\G^m$ is the Thom space of the $1$-dimetional vector bundle over a point. Hence, by Theorem \ref{homotopy purity} it is isomorphic to $\P^1$. 
		\item Let $X=\A^1_{0;n}$ denotes $\A^1$ with $n$ origins, formed by gluing $n$ copies of $\A^1$ along $\A^1\backslash\{0\}$ via identity. Also denote the origins by $0_1,\dots,0_n$. 
	By the homotopy pushout diagram we see, 
	
	\[
	\xymatrix{{\G_m} \ar@{^{(}->}[d] \ar[r] & {\A^1\cong*} \ar[d] \\
		{\A^1_{(n-1);0}} \ar[r] & {\A^1_{n;0}\cong (\A^1_{(n-1);0}/\G_m). }
	}
	\]
	Let $Z=\{0_1,\dots,0_{(n-1)}\}\hookrightarrow\A^1_{(n-1);0}$ be the smooth closed embedding. Then by homotopy purity (Theorem \ref{homotopy purity}) we see, $$(\A^1_{(n-1);0}/\G_m)=Th(N_{Z,X})=\Sigma_T^1(Z)_+=\P^1\wedge(Z)_+=(\P^1)^{\vee (n-1)}$$ as $Z$ is the disjoint union of $(n-1)$ points, thus the normal bundle is trivial. Hence, the Thom space is wedge of $(n-1)$-many copies of $\P^1$. The calculations are done as it was mentioned in \cite[Proposition 5.3.4.3]{as}\\
	
	The Picard group of $\A^1_{0;n}$ is $\Pic(\A^1_{0;n})=\Z^{(n-1)}$, direct sum of $(n-1)$ copies of $\Z$. Indeed, using Weil divisors we see, $nc = \Div((x-c)^n)=0$ in $\Pic(\A^1_{0;n})$ if $c$ is not one of the origins. Let $q:\A^1_{0;n}\to \A^1$ by $0_i\mapsto 0.\ c\mapsto c$, then $\Div(q)=\Sigma_{1}^{n}(0_i)=0$ in $\Pic(\A^1_{0;n})$. Let $f:\A^1_{0;n}=\tilde{\A}^1 \to\A^1$ be any map, then $f(0_i)=f(0_j)$ for all $i,j$.  Let $U=\Spec(k(x)),\ T=\Spec(k[x]_{(x)})$

	\[\xymatrixcolsep{4pc}
	\xymatrix{{U=\Spec(k(x))} \ar@{^{(}->}[d] \ar[r]^<<<<<<<<<\eta & {\tilde{\A}^1} \ar[r]^f & {\A^1} \\
		{T=\Spec(k[x]_{(x)})} \ar@<-1.3px>[ur]_{s_j} \ar@<1.3px>[ur]^{s_i}
	}
	\]
	where $\eta$ maps to the generic point of $\tilde{\A}^1$. Then the two extension $s_i,\ s_j$ mapping the closed point to $0_i\mathrm{\ and \ }0_j$ respectively must be a unique map after composing with $f$. Thus $f(0_i)=f(0_j)$ for all $i,j$. Thus, we have $$\Pic(\A^1_{0;n})=\Z[0_1,\dots0_n]/\sum_{1}^{n}(0_i)\cong \Z^{(n-1)}.$$ Hence $\A^1_{0;n}\mathrm{\ are\ }\A^1_{0;m}$ is not $\A^1$-homotopic if $m\neq n$.

	\item Let $X=\A^1_{0,1;2,2}\ i.e.,\ \A^1$ with \textit{double} origin and \textit{double} 1. 
	\[
	\xymatrix{{\G_m} \ar@{^{(}->}[d] \ar[r]^{t\mapsto(t+1)} & {\A^1\cong*} \ar[d] \\
		{\A^1_{2;0}} \ar[r] & {\A^1_{0,1;2,2}\cong\A^1_{0;2}/\G_m}
	}
	\]
	By the above homotopy pushout diagram we see that it is $\A^1$-homotopic to $\A^1_{0;2}/\G_m$, which is also $\A^1$-homotpic to $\A^1_{0;3}\ i.e.,\ \A^1$ with $3$ origins.\\
	
	However they are not isomorphic as schemes. By similar argument as in the last example we have for any map $f:X=\A^1_{0,1;2,2}\to \A^1$ $f(0_i)=f(1_j)$ for all $1\leq i,j\leq 2$. Now, for any $g:X\to \A^1_{(0,3)}$ we have the composition $$X\xrightarrow{g}\A^1_{(0,3)}\xrightarrow{q}\A^1.$$ Here $q$ maps all of the origins to $0$ of $\A^1$ and identity elsewhere. Thus $g(0_i),g(1_j)\subset \{0_1,0_2,0_3\}$ since $q\circ g(0_i)=q\circ g(1_j)$ but $q(x)=q(y)$ if $x=y \mathrm{\ or\ } x,y \in \{0_1,0_2,0_3\}.$ Hence, $g$ cannot be injective.

	\item From the previous case for $n\geq2$, $X=\P^1_{0;(n-1)}$ i.e., $\P^1$ with $(n-1)$ origins is isomorphic to $\A^1_{0;n}$. The case $n=2$  is done in the first example. Using induction it follows from the same homotpy purity diagram 
	
	\[
	\xymatrix{{\G_m} \ar@{^{(}->}[d] \ar[rr]&& {\A^1\cong*} \ar[d] \\
		{\P^1_{0;(n-2)}\cong\A^1_{0;(n-1)}} \ar[rr] && {\P^1_{0;(n-1)}\cong\A^1_{0;(n-1)}/\G_m\cong\A^1_{0;n}.}
	}
	\]
\end{enumerate}
		\end{example}

		Now we generalise this computations in the following lemma.
		\begin{lemma}
			Suppose, $C_1$ be the curve affine line with $m$ origins and Suppose there are $r$ many $k$-points $a_1, \dots, a_r \in \mathbb{A}^1$ along with the integers $n_1, \dots, n_r$ such that $\sum_{1}^{r}(n_i-1)=m-1$. $C_2$ be the affine line with the points $a_i,\ n_i$ many times denoted as,
			$C_2:= \mathbb{A}^1(a_1, n_1; \dots; a_r, n_r)$ 
			Then $C_1$ and $C_2$ are $\A^1$-weakly equivalent.
			
		\end{lemma}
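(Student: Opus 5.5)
The plan is to show that both curves are $\A^1$-weakly equivalent to a wedge of $(m-1)$ copies of $\P^1$. For $C_1=\A^1_{0;m}$ this is already done in the preceding Example, part (2):
$$\A^1_{0;m}\simeq (\P^1)^{\vee(m-1)}.$$
So it suffices to prove that $C_2=\A^1(a_1,n_1;\dots;a_r,n_r)$ is $\A^1$-weakly equivalent to $(\P^1)^{\vee \sum_i(n_i-1)}$. I would argue by induction on $N=\sum_i (n_i-1)$. When $N=0$, $C_2=\A^1$ is contractible.

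For the inductive step, pick an index $i$ with $n_i\geq 2$ and one copy $a_i'$ of the point $a_i$. Let $C_2'$ be the curve obtained by deleting this copy, so $C_2'=\A^1(a_1,n_1;\dots;a_i,n_i-1;\dots;a_r,n_r)$, with $\sum(n_j-1)=N-1$. As in Example (2)--(3), $C_2$ is obtained by gluing a copy of $\A^1$ (a neighbourhood of $a_i'$) to $C_2'$ along $\G_m=\A^1\setminus\{a_i\}$. The underlying statement is that $C_2=C_2'\cup \A^1$ is a Zariski cover with intersection $\A^1\setminus\{a_i\}\cong\G_m$. By Zariski (hence Nisnevich) Mayer--Vietoris, i.e.\ the \v{C}ech hypercover being a local weak equivalence, $C_2$ is the homotopy pushout of $C_2'\hookleftarrow \G_m\to\A^1$. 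Since $\A^1\simeq *$ and $\G_m\hookrightarrow C_2'$ is a monomorphism, hence a cofibration, we get
$$C_2\simeq C_2'/(C_2'\setminus Z),$$
where $Z\subset C_2'$ is the closed set consisting of all copies of $a_i$ in $C_2'$. The complement $C_2'\setminus Z$ is identified with $\G_m$ only up to the other multiple points. To be careful, I would instead use the open neighbourhood $W\subset C_2'$ given by $\A^1$ with $n_i-1$ copies of $a_i$ and no other special points, and apply the same pushout there.

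It is cleaner to apply Theorem \ref{homotopy purity}: $C_2'/(C_2'\setminus Z)\simeq Th(N_{Z})$, and excision reduces this to the neighbourhood $W$. Since $Z$ is a disjoint union of $n_i-1$ points with trivial normal bundle, this Thom space is $\P^1\wedge Z_+=(\P^1)^{\vee(n_i-1)}$. However, this computes only the local contribution and does not see $C_2'$ globally, so I would rather use a different cofiber sequence.

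The simplest correct route is a gluing argument. Write $C_2$ as the union of the open pieces $U_j=\A^1(a_j,n_j)\setminus\{a_l : l\neq j\}$, each of which is $\A^1\setminus\{\text{finitely many points}\}$ with the point $a_j$ multiplied. Then proceed one copy at a time: the homotopy pushout along $\G_m$ exhibits $C_2$ as the homotopy cofiber of $\G_m\to C_2'$, where $\G_m\hookrightarrow\A^1\hookrightarrow C_2'$ is the inclusion of the punctured neighbourhood. Since $C_2'\simeq(\P^1)^{\vee(N-1)}$ by induction, it remains to show that the cofiber of $\G_m\to C_2'$ adds one wedge summand of $\P^1$.

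I expect this to be the main obstacle: identifying the map $\G_m\to C_2'$ up to homotopy. The map factors through an open $\A^1$-chart $\A^1\subset C_2'$ containing one copy of $a_i$, and $\A^1$ is contractible. Hence $\G_m\to C_2'$ is $\A^1$-homotopic to a constant map, i.e.\ null, and its cofiber is $C_2'\vee\Sigma\G_m$. Pointedness needs the basepoint to lie in $\G_m$, which is fine since we can choose it there. Using $\Sigma\G_m\simeq \P^1$ (equivalently $\A^1/\G_m\simeq\P^1$ from Example (1)), we get
$$C_2\simeq C_2'\vee\P^1\simeq(\P^1)^{\vee N}.$$
The same induction applied to $C_1$ recovers Example (2), and comparing with $N=m-1$ gives $C_1\simeq C_2$. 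The delicate point is the null-homotopy: it must be justified in $\mathcal{H}_\bullet(k)$ via the factorization through a contractible chart. One must also check that the homotopy cofiber of a null map is the wedge with the suspension, which is a standard model-category fact.
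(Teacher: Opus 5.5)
Your final argument is correct and is essentially the paper's proof. Both induct on the number of extra points and write $C_2$ as the homotopy pushout of $C_2'\hookleftarrow\G_m\to\A^1\simeq *$, coming from the Zariski cover of $C_2$ by $C_2'$ and a chart through the new point. Your observation that $\G_m\to C_2'$ factors through a contractible chart of $C_2'$, so that the cofiber is $C_2'\vee S^1\wedge\G_m\simeq C_2'\vee\P^1$, makes explicit a step the paper leaves implicit: matching this attaching map with the one for $\A^1_{0;m-1}$. In a write-up, drop the abandoned detours, in particular the claim $C_2\simeq C_2'/(C_2'\setminus Z)$ with $Z$ only the copies of $a_i$, which is false as soon as some other point is multiplied.
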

		\begin{proof}
			 Let $C_2= \mathbb{A}^1(a_1, n_1; \dots, a_r, n_r)$. Without loss of generality we can assume $a_1=0,\ a_2=1$. By induction, also assume $\mathbb{A}^1(a_1, n_1; \dots, a_r, n_r-1) \cong \A^1_{0;1+(\sum_{0}^{(r-1)}(n_i-1))+(n_r-2)}$ the affine line with $(1+(\sum_{0}^{(r-1)}(n_i-1))+(n_r-2))$ many origins. From the pushout diagram we have $C_2$ is the homotopy pushout of the following diagram

			\[
			\xymatrix{{\G_m} \ar@{^{(}->}[d] \ar[rr]^{t\mapsto(a_r+t)}&& {\A^1 \cong *}\ar[d] \\
				{\A^1_{0;1+ (\sum_0^{r-1}(n_i-1))+(n_r-2)}\cong \mathbb{A}^1(a_1, n_1; \dots, a_r, n_r-1)} \ar[rr]&& X.
			}
			\]
			Thus $C_2\cong C_1$. Also by similar argument we can conclude that any morphism between $C_2\to C_1$ cannot be injective, thus they are not isomorphic.
		\end{proof}
		\begin{remark}
			We see that for $\A^1$-connected smooth curve, their $\A^1$-homotopic type is not same as their isomorphic type, however for $\A^1$-rigid curves it is not the case. If $C$ is an $\A^1$-rigid curve (\textit{not-necessarily separated}) then $\pi_0^{\A^1}(C)=C$ as a sheaf, thus in this case isomorphic type is same as $\A^1$-homotpy type. 
		\end{remark}

\section{Appendix} \label{detailed horn formula}

\begin{point}[Boundary and degeneracy maps of $Sing_*(\sX)(U)$]\label{maps of sing}~
	
Let $\sX$ be a space and $U$ be a scheme over $k$. Here we describe the boundary and degeneracy maps of the simplicial set $Sing_*(\sX)(U)$. They are denoted by $d_i$ and $s_i$ respectively. 
$$d_i: Sing_*(\mathcal{X})(U)_n \to Sing_*(\mathcal{X})(U)_{n-1}, \ 0 \leq i \leq n$$
are induced by the maps $d^i: \Delta^{n-1}_a \to \Delta^n_a$, which are given by the morphism between $k$-algebras (we again denote it by $d^i$)
$$d^i:  \frac{k[x_0, x_1,\dots, x_n]}{(\sum_{i=0}^n x_i - 1)} \to  \frac{k[x_0, x_1,\dots, x_{n-1}]}{(\sum_{i=0}^{n-1} x_i - 1)}$$
defined as 
\[
\begin{cases}
	\overline{x_j} \mapsto \overline{x_j} & j < i \\
	\overline{x_j} \mapsto \overline{0} & j = i \\
	\overline{x_j} \mapsto \overline{x_{j-1}} & j > i
\end{cases}
\]
(here, for $f \in k[x_0,\dots, x_n]$ $\bar{f}$ denotes the class in $\frac{k[x_0, x_1,\dots, x_n]}{(\sum_{i=0}^n x_i - 1)}$). 
Similarly, here the degeneracy maps 
$$s_i: Sing_*(\mathcal{X})(U)_n \to Sing_*(\mathcal{X})(U)_{n+1}, \ 0 \leq i \leq n$$
are induced by the maps $s^i: \Delta^{n+1}_a \to \Delta^n_a$, 
which are given by the morphism between $k$-algebras (we again denote it by $s^i$)
$$s^i:  \frac{k[x_0, x_1,\dots, x_n]}{(\sum_{i=0}^n x_i - 1)} \to  \frac{k[x_0, x_1,\dots, x_{n+1}]}{(\sum_{i=0}^{n+1} x_i - 1)}$$
defined as 
\[
\begin{cases}
	\overline{x_j} \mapsto \overline{x_j} & j < i \\
	\overline{x_i} \mapsto \overline{x_i + x_{i+1}} & j = i \\
	\overline{x_j} \mapsto \overline{x_{j+1}} & j > i
\end{cases}
\]
\end{point}
To give the horn filling formula for $Sing_*(\A^m_k)(U)$ for $U \in Sm/k$ first we recall the horn filling formula for simplicial groups.

\begin{lemma}\cite[Lemma 3.1]{curtis-simplicial}\label{horn-formula}
Let $G$ be a simplicial group. $(f_1,\dots,f_{l-1},f_{l+1},f_n) \in G_{n-1} \mathrm{\ such\ that\ }d_i(f_j)=d_{j-1}(f_i)\ i<j\neq k;\ $ be a horn. Then take -
	\begin{align*}
		g_0 &= s_0(f_0)& \\
		g_i &= g_{i-1}\cdot s_i(d_i(g_{i-1}))^{-1}\cdot s_i(f_i)\  &\mathrm{ for}\ 0<i<l\\
		g_i &= g_{i+1} \cdot s_{i-1}(d_i(g_{i+1}))^{-1}\cdot s_i(f_i)\  & \mathrm{for}\ l<i<n\\
		g_n &= g_{l-1}\cdot s_{n-1}(d_n(g_{l-1}))^{-1}\cdot s_{n-1}(f_n).&
	\end{align*}
	Where $\cdot$ is the group operation. When $k=0$ we set $g_{-1}=e_G$ the identity element. Then $g_l\in G_n$ satisfies the property $d_j(g_{l+1})=f_j$ for $0\leq j \leq n$.
\end{lemma}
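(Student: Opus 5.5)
The plan is a direct verification by induction on the construction, using only the simplicial identities and the compatibility conditions $d_i f_j = d_{j-1} f_i$ for $i<j$ with $i,j \neq l$. This is Moore's argument that simplicial groups are Kan. The indexing in the statement as displayed contains evident slips: $k$ in place of $l$, $g_n$ built from $g_{l-1}$, and the conclusion written for $g_{l+1}$. I will therefore prove the intended version. One first corrects the faces $0,1,\dots,l-1$ in increasing order. One then corrects the faces $n,n-1,\dots,l+1$ in decreasing order, starting from the last element of the first stage. The final element has $d_j = f_j$ for all $j\neq l$.

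\emph{Stage 1 (ascending).} I will prove by induction on $i$, for $0\le i<l$, that $d_j g_i = f_j$ for all $j\le i$.
\begin{itemize}
\item The base case is $d_0 s_0 f_0 = f_0$.
\item For the step, write $g_i = g_{i-1}\cdot c_i$ with $c_i = s_i(d_i g_{i-1})^{-1}\cdot s_i(f_i)$.
\item For $j<i$: since $d_j$ is a homomorphism and $d_j s_i = s_{i-1} d_j$,
$$d_j c_i = s_{i-1}(d_j d_i g_{i-1})^{-1}\cdot s_{i-1}(d_j f_i).$$
By the identity $d_j d_i = d_{i-1} d_j$ and the induction hypothesis, $d_j d_i g_{i-1} = d_{i-1} f_j$. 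By compatibility, $d_{i-1} f_j = d_j f_i$. Hence $d_j c_i = e$ and $d_j g_i = d_j g_{i-1} = f_j$.
\item For $j=i$: the identity $d_i s_i = \mathrm{id}$ gives $d_i g_i = d_i g_{i-1}\cdot (d_i g_{i-1})^{-1}\cdot f_i = f_i$.
\end{itemize}
When $l=0$ this stage is empty and one starts from $g_{-1}=e_G$.

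\emph{Stage 2 (descending).} Let $h$ denote the current element, initially $g_{l-1}$. For $i=n,n-1,\dots,l+1$, replace $h$ by $h\cdot s_{i-1}(d_i h)^{-1}\cdot s_{i-1}(f_i)$. I will prove inductively that after the step at index $i$ we have $d_j h = f_j$ for all $j<l$ and all $j\ge i$.
\begin{itemize}
\item For $j=i$: the identity $d_i s_{i-1} = \mathrm{id}$ gives $f_i$ exactly as in Stage 1.
\item For $j>i$: use $d_j s_{i-1} = s_{i-1} d_{j-1}$ and $d_{j-1} d_i = d_i d_j$. Together with $d_j h = f_j$ and compatibility $d_i f_j = d_{j-1} f_i$, the correction factor has trivial $j$-th face.
\item For $j<l\le i-1$: use $d_j s_{i-1} = s_{i-2} d_j$, $d_j d_i = d_{i-1} d_j$, and compatibility $d_{i-1} f_j = d_j f_i$. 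The correction again has trivial $j$-th face.
\end{itemize}
After the step at $i=l+1$, every face other than the $l$-th agrees with the prescribed $f_j$. This final element is the required filler.

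\emph{Main obstacle.} The only real difficulty is the index bookkeeping at the boundary cases. I must make sure that every instance of the compatibility relation invoked involves two indices that are both different from $l$. Stage 1 only uses pairs $(j,i)$ with $j<i<l$, and Stage 2 only uses pairs where both indices lie in $\{0,\dots,l-1\}\cup\{l+1,\dots,n\}$, so the omitted face $f_l$ is never needed. I must also check that the degeneracy $s_{i-1}$ rather than $s_i$ is used in the descending stage, so that $d_i s_{i-1}=\mathrm{id}$ holds and the lower faces are not disturbed. These checks are routine once the simplicial identities are listed; I would write them out for the two stages separately.
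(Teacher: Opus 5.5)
Your verification is correct: it is the standard Moore argument underlying Curtis's Lemma 3.1, which the paper only cites without proof. The ascending stage, the descending stage using $s_{i-1}$, and your check that every compatibility relation invoked involves two indices different from $l$ all go through as you describe. One correction to your reading of the statement: building $g_n$ from $g_{l-1}$ and taking $g_{l+1}$ as the output are not slips, since your Stage 2 reproduces exactly that. The genuine typo in the descending line is $s_i(f_i)$, which must be $s_{i-1}(f_i)$, and you implicitly use that correct form.
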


Clearly for $U\in Sm/k\ Sing_*(\A^m_K)(U)$ is a simplicial group. The group operation $+$ of $Sing_n(\A^m_k)(U)$ is defined point-wise and using group structure of $\A^m_k$. Let $f,g \in Sing_n(\A^m_k)(U)=Hom(\Delta^n_a\times U,\A^m_k)$, then $(f+g) \in Hom (\Delta^n_a\times U,\A^m_k)$ defined as \[ (f+g)(x_0,\dots,x_n)= f(x_0,\dots,x_n)+g(x_0,\dots,x_n) \] where the second + is the group operation of the variety $\A^m_k$. It has $\mathrm{0}\in Hom(\Delta^n_a\times U,\A^m_k)$ constant map to $(0,\dots,0)$ as identity element and the inverse of $f$ is $-f$ which is defined pointwise.  Similarly the boundary and degeneracy maps are group homomorphism. Let $f,g \in Sing_n(\A^m_k)(U)=Hom(\Delta^n_a\times U,\A^m_k)$ then 
\begin{flalign*}
	&d_i(f+g)(x_0,\dots,x_{n-1})&\\
	&= (f+g)(x_0,\dots,x_{i-1},0,x_{i},\dots,x_{n-1})&\\
	&= f(x_0,\dots,x_{i-1},0,x_{i},\dots,x_{n-1})+ g(x_0,\dots,x_{i-1},0,x_{i},\dots,x_{n-1})&\\
	&=d_i(f)+d_i(g)&	
\end{flalign*}

thus $d_i:Hom(\Delta^n_a \times U,\A^m_k)\to Hom(\Delta^{n-1}_a \times U,\A^m_k)$ is a group homomorphism. Likewise,
 \begin{align*}
 	&s_i(f+g)(x_0,\dots,x_{n+1})\\
 	&= (f+g)(x_0,\dots,x_{i-1},x_{i}+x_{i+1},\dots,x_{n+1})\\
 	&= f(x_0,\dots,x_{i-1},x_{i}+x_{i+1},\dots,x_{n+1})+ g(x_0,\dots,x_{i-1},x_{i}+x_{i+1},\dots,x_{n+1})\\
 	&=s_i(f)+s_i(g)	
 \end{align*} 
 thus $s_i:Hom(\Delta^n_a \times U,\A^m_k)\to Hom(\Delta^{n+1}_a \times U,\A^m_k)$ is a group homomorphism.
 
\begin{point}[Formula of horn filling of $Sing_*(\mathbb{A}^m_k)(U),\ U\in Sm/k$]\label{full hornfill An}~\\
	Let $f_0,\dots,f_{l-1},f_{l+1},f_n \in Sing_{n-1}(\A^m_k)(U)$ satisfying the compatibility condition $d_i(f_j)=d_{j-1}(f_i)$ for $i<j\neq l$ be a horn. Using Lemma \ref{horn-formula} and the group structure of $Sing_*(\A^m_k)(U)$ we define inductively -
	\begin{align*}
		g_0 &= s_0(f_0)& \\
		g_i &= g_{i-1}- s_i(d_i(g_{i-1})) + s_i(f_i)\  &\mathrm{ for}\ 0<i<l\\
		g_i &= g_{i+1} - s_{i-1}(d_i(g_{i+1})) + s_i(f_i)\  & \mathrm{for}\ l<i<n\\
		g_n &= g_{l-1} - s_{n-1}(d_n(g_{l-1})) + s_{n-1}(f_n).&
	\end{align*}
Similarly for $l=0$ we take $g_{-1}=0\in Hom(\Delta^n_a \times U,\A^m)$, in that case $g_n=s_{n-1}(f_n)$. Then $g_{l+1}\in Sing_n(\A^m_k)(U)$ satisfies $d_j(g_{l+1})=f_j \mathrm{\ for\ } 0\leq j\leq n$. For algebraic description of the horn filling see \cite[7.1.1]{Biman-Thesis}.

\end{point}

\

\

\

\

\bibliographystyle{alpha}
	
    \end{document}